\documentclass{amsart}
\usepackage{fullpage}
\usepackage{amsmath, amsthm, amsfonts}
\usepackage{amssymb}
\usepackage{latexsym}
\usepackage{verbatim}
\usepackage{a4wide}
\usepackage[all,cmtip]{xy}
\usepackage{color}
\usepackage[backref,breaklinks]{hyperref}
\usepackage{breakurl}
\definecolor{mylinkcolor}{rgb}{0.2,0.5,0.2}
\definecolor{myurlcolor}{rgb}{0.0,0.0,0.75}
\hypersetup{colorlinks=true,urlcolor=myurlcolor,citecolor=myurlcolor,linkcolor=mylinkcolor,linktoc=page,breaklinks=true}

\usepackage{enumerate}
\usepackage{amssymb}
\usepackage{array}
\usepackage{nicefrac}
\usepackage[section]{placeins}
\usepackage{mathtools}
\usepackage{booktabs}
\usepackage{subcaption}

\DeclareMathAlphabet{\mathfr}{U}{euf}{m}{n}

\newtheorem{maintheorem}{Theorem}
\newtheorem{theorem}{Theorem}[section]

\newtheorem{proposition}[theorem]{Proposition}
\newtheorem{corollary}[theorem]{Corollary}

\newtheorem{lemma}[theorem]{Lemma}

\newtheorem{question}[theorem]{Question}

\theoremstyle{definition}
\newtheorem{remark}[theorem]{Remark}
\newtheorem{definition}[theorem]{Definition}


\newcommand{\cO}{{\mathcal O}}

\newcommand{\Q}{\mathbb Q}
\newcommand{\Qbar}{{\overline{\mathbb Q}}}
\newcommand{\Gal}{\mathrm{Gal}}

\newcommand{\R}{\mathbb R}
\newcommand{\Z}{\mathbb Z}

\newcommand{\F}{\mathbb F}

\newcommand{\C}{\mathbb C}
\newcommand{\SL}{\mathrm{SL}}
\newcommand{\GL}{\mathrm{GL}}
\newcommand{\M}{\mathrm{M }}
\newcommand{\PGL}{\mathrm{PGL}}
\newcommand{\PSL}{\mathrm{PSL}}

\newcommand{\id}{\operatorname{id}}

\newcommand{\End}{\operatorname{End}}
\newcommand{\Hom}{\operatorname{Hom}}

\newcommand{\Frob}{\operatorname{Frob}}
\newcommand{\Aut}{\operatorname{Aut}}

\newcommand{\Jac}{\operatorname{Jac}}

\newcommand{\ord}{\operatorname{ord}}
\newcommand{\Rr}{\operatorname{Re}}
\newcommand{\Ii}{\operatorname{Im}}

\newcommand{\G}{\mathrm{G}}

\newcommand{\p}{\mathfrak{p}}
\newcommand{\Res}{\operatorname{Res}}

\newcommand{\Tr}{\operatorname{Tr}}

\newcommand{\Ker}{\operatorname{Ker}}
\newcommand{\im}{\mathrm{Im}}

\newcommand{\GSp}{\mathrm{GSp}}

\newcommand{\Sp}{\mathrm{Sp}}

\newcommand{\USp}{\mathrm{USp}}

\newcommand{\ST}{\mathrm{ST}}

\newcommand{\Lef}{\operatorname{L}}

\newcommand{\AST}{\operatorname{AST}}
\newcommand{\topo}{\operatorname{top}}

\newcommand{\sym}[1]{{\mathrm{S}_#1}}

\newcommand{\T}{\mathrm{T}}

\numberwithin{equation}{section}

\usepackage{url}

\newcommand{\Unitary}{\mathrm{U}}
\DeclareMathOperator{\Trace}{Trace}

\begin{document}
\title{Sato-Tate distributions of twists of\\the Fermat and the Klein quartics}


\author[Fit\'e]{Francesc Fit\'e}
\address{Institute for Advanced Study\\Fuld Hall\\ 1 Einstein Drive\\
Princeton \\ New Jersey 08540\\USA}
\email{ffite@ias.edu}
\urladdr{https://mat-web.upc.edu/people/francesc.fite/}

\author[Lorenzo]{Elisa Lorenzo Garc\'ia} 
\address{Laboratoire IRMAR, 
	Universit\'e de Rennes 1\\
	Campus de Beaulieu, 
	35042, Rennes Cedex\\
	France}
\email{elisa.lorenzogarcia@univ-rennes1.fr}
\urladdr{https://sites.google.com/site/elisalorenzo/home}

\author[Sutherland]{Andrew V. Sutherland}
\address{Department of Mathematics\\Massachusetts Institute of Technology\\77 Massachusetts Avenue\\Cambridge, Massachusetts  02139\\USA}
\email{drew@math.mit.edu}
\urladdr{https://math.mit.edu/~drew}

\begin{abstract}
We determine the limiting distribution of the normalized Euler factors of an abelian threefold $A$ defined over a number field $k$ when $A$ is $\Qbar$-isogenous to the cube of a CM elliptic curve defined over $k$.
As an application, we classify the Sato--Tate distributions of the Jacobians of twists of the Fermat and Klein quartics, obtaining 54 and 23, respectively, and 60 in total. We encounter a new phenomenon not visible in dimensions~$1$ or~$2$: the limiting distribution of the normalized Euler factors is not determined by the limiting distributions of their coefficients.
\end{abstract}
\maketitle
\tableofcontents

\section{Introduction}\label{section: introduction}

Let $A$ be an abelian variety of dimension $g\geq 1$ defined over a number field $k$. For a prime~$\ell$, let~$V_\ell(A)\coloneqq \Q \otimes \varprojlim_n\!\!\! A[\ell^n]$ be the (rational) $\ell$-adic Tate module of~$A$, and let
$$
\varrho_{A}\colon G_k\rightarrow \Aut(V_\ell(A))
$$
be the $\ell$-adic representation arising from the action of the absolute Galois group $G_k$ on $V_\ell(A)$.
Let~$\p$ be a prime of~$k$ (a nonzero prime ideal of the ring of integers $\cO_k$) not lying above the rational prime $\ell$. The $L$-\emph{polynomial} of~$A$ at the prime $\p$ is defined by 
$$
L_\p(A,T)\coloneqq\det(1-\varrho_A(\Frob_\p)T;\,V_\ell(A)^{I_\p})\in\Z[T]\,,
$$
where $\Frob_\p$ denotes a Frobenius element at $\p$ and $I_\p$ is the inertia subgroup at $\p$; it does not depend on the choice of $\ell$.
Let~$S$ be a finite set of primes of $k$ that includes all primes of bad reduction for $A$ and all primes lying above $\ell$.
For $\p\not\in S$ the polynomial $L_\p(A,T)$ has degree~$2g$ and coincides with the numerator of the zeta function of the reduction of $A$ modulo $\p$. The $L$-function of $A$ is defined as the Euler product
$$
L(A,s)\coloneqq \prod_\p L_\p(A,N(\p)^{-s})^{-1},
$$
where $N(\p)\coloneqq [\cO_k\!:\!\p]$ is the (absolute) norm of $\p$.  The \emph{normalized $L$-polynomial} of $A$ at $\p$ is the monic polynomial $\overline L_\p(A,T)\coloneqq L_ \p(A,N(\p)^{-1/2}T)\in \R[T]$; its roots come in complex conjugate pairs and lie on the unit circle, as shown by Weil in \cite{Weil}.

As constructed by Serre in \cite{Ser12}, the Sato--Tate group $\ST(A)$ is a compact real Lie subgroup of $\USp(2g)$, defined up to conjugacy in $\GL_{2g}(\C)$, that comes equipped with a map that assigns to each prime $\p \not\in  S$ a semisimple conjugacy class $s(\p)$ of $\ST(A)$ for which
$$
\det(1-s(\p)T)=\overline L_\p(A,T).
$$
Let $\mu$ be the pushforward of the Haar measure of $\ST(A)$ to its set of conjugacy classes $X$, 
and let $\{s(\p)\}_\p$ denote the sequence of conjugacy classes $s(\p)$ arranged in an order compatible with the partial ordering of primes $\p$ by norm.
The generalized Sato--Tate conjecture predicts that:
\smallskip

\begin{enumerate}
\item[(ST)] The sequence $\{s(\p)\}_\p$ is equidistributed on $X$ with respect to the measure $\mu$.
\end{enumerate}
\smallskip

This conjecture has been proved for abelian varieties of dimension one (elliptic curves) over a totally real \cite{HSBT10} or CM number field \cite{ACCGHLNSTT}, and in several special cases for abelian varieties of higher dimension, including abelian varieties with potential CM \cite{Joh17}.

For each $s\in X$, we write $\det(1-sT)\eqqcolon\sum_{j=0}^{2g} a_jT^j$, and define
$$
 I_j\coloneqq\left[ -\binom{2g}{j},\binom{2g}{j}\right]\,,\qquad \text{and}\qquad I:=\prod_{j=1}^g I_j\,.
$$
For $0\leq j \leq 2g$ we have $a_j\in I_j$ and $a_j=a_{2g-j}$ for $0 \leq j\leq 2g$, since the eigenvalues of any conjugacy class of $\USp(2g)$ come in complex conjugate pairs on the unit circle. Consider the maps
$$
\Phi\colon X {\longrightarrow} I\,,\qquad \Phi_j:=X \stackrel{\Phi}{\longrightarrow} I \stackrel{\varpi_j}{\longrightarrow} I_j\,,
$$
where $\Phi$ is defined by $\Phi(s)=(a_1,\dots,a_g)$ and $\varpi_j$ is the projection to the $j$th component. Let $\mu_I$ (resp.\ $\mu_{I_j}$) denote the projection of the measure $\mu$ by the map $\Phi$ (resp.\ $\Phi_j$). We will call $\mu_I$ the \emph{joint coefficient measure} and the set of measures $\{\mu_{I_j}\}_j$, the \emph{independent coefficient measures}.

The measures $\mu_I$ and $\mu_{I_j}$ are respectively determined by their moments
\begin{equation}\label{equation: moments}
\M_{n_1,\dots,n_g}[\mu_I]\coloneqq\int_I a_1^{n_1}\cdots a_g^{n_g}\mu_I(a_1,\dots,a_g)\,,\qquad \M_{n}[\mu_{I_j}]\coloneqq\int_{I_j} a_j^{n}\mu_{I_j}(a_j)\,,
\end{equation}
for $n_1,\dots,n_g\geq 0$ and $n\geq 0$.
We denote by $a_j(A)(\p)$, or simply $a_j(\p)$, the $j$th coefficient $\Phi_j(s(\p))$ of the normalized $L$-polynomial, and by $a(A)(\p)$, or simply $a(\p)$, the $g$-tuple $\Phi(s(\p))$ of coefficients of the normalized $L$-polynomial. We can now consider the following successively weaker versions of the generalized Sato--Tate conjecture:
\smallskip

\begin{enumerate}
\item[(ST${}^\prime$)] The sequence $\{a(\p)\}_\p$ is equidistributed on $I$ with respect to $\mu_I$.
\medskip

\item[(ST${}^{\prime\prime}$)] The sequences $\{a_j(\p)\}_\p$ are equidistributed on $I_j$ with respect to $\mu_{I_j}$, for $1\leq j \leq g$.
\end{enumerate}
\smallskip

Let $\pi(x)$ count the number of primes $\p\not\in S$ for which $N(\p)\leq x$. If we define

\begin{equation}\label{equation: defgenmom}
\M_{n_1,\dots,n_g}[a]\coloneqq\lim _{x\rightarrow \infty} \frac{1}{\pi(x)}\!\!\sum_{N(\p)\leq x}\!\! a_1(\p)^{n_1} \cdots a_g(\p)^{n_g}\,,
\quad
\M_n[a_j]\coloneqq\lim _{x\rightarrow \infty} \frac{1}{\pi(x)}\!\!\sum_{N(\p)\leq x}\!\! a_j(\p)^{n}\,,
\end{equation}
then (ST${}^\prime$) holds if and only if $\M_{n_1,\dots,n_g}[\mu_I]=\M_{n_1,\dots,n_g}[a]$ for every $n_1,\dots,n_g\geq 0$, while (ST${}^{\prime\prime}$) holds if and only if  $\M_{n}[\mu_{I_j}]=\M_n[a_j]$ for every $n\geq 0$ and $1\leq j\leq g$.

Let $A'$ be an abelian variety defined over a number field $k'$ also of dimension $g$, and let $X'$, $\mu'$, $\mu_I'$, $\mu_{I_j}'$ be the data associated to $A'$ corresponding to $X$, $\mu$, $\mu_I$, $\mu_{I_j}$, respectively. The following implications are immediate:
\begin{equation}\label{equation: implication}
\ST(A)=\ST(A') \quad \Rightarrow \quad  \mu_I=\mu_I'\quad \Rightarrow \quad  \{\mu_{I_j}\}_j=\{\mu_{I_j}'\}_j\,.
\end{equation}
The classification of Sato--Tate groups of elliptic curves and abelian surfaces together with the explicit computation of their Haar measures implies that for $g\leq 2$ the converses of the implications in \eqref{equation: implication} both hold; see \cite{FKRS12}. In this article, we show that for $g=3$, there are cases in which the converse of the second implication of \eqref{equation: implication} fails to hold.\footnote{Using Gassmann triples one can construct examples (of large dimension) where the converse of the first implication in \eqref{equation: implication} also fails to hold, but we will not pursue this here.}
\smallskip

{\textbf{Main result.}}
In this article we obtain the counterexamples alluded to in the previous paragraph by searching among abelian threefolds defined over a number field that are $\Qbar$-isogenous to the cube of an elliptic curve with complex multiplication (CM).
More precisely, we obtain a complete classification of the Sato--Tate groups, the joint coefficient measures, and the independent coefficient measures of the Jacobians of twists of the Fermat and the Klein quartics (which are both $\Qbar$-isogenous to the cube of a CM elliptic curve). The Fermat and Klein quartics are respectively given by the equations
\begin{equation}\label{equation: Fermatandklein}
\tilde C^0_1\colon x^4+y^4+z^4=0\,,
\qquad
\tilde C^0_7\colon x^3y+y^3z+z^3x=0\,,
\end{equation}
and they have the two largest automorphism groups among all genus 3 curves, of sizes 96 and 168, respectively.
Our main result is summarized in the following theorem.

\begin{maintheorem}\label{theorem: Main}
The following hold:
\begin{enumerate}[{\rm (i)}]
\item There are $54$ distinct Sato--Tate groups of twists of the Fermat quartic.  These give rise to $54$ (resp.\ $48$) distinct joint (resp.\ independent) coefficient measures.
\item There are $23$ distinct Sato--Tate groups of twists of the Klein quartic.  These give rise to $23$ (resp.\ $22$) distinct joint (resp.\ independent) coefficient measures.
\item There are $60$ distinct Sato--Tate groups of twists of the Fermat or the Klein quartics.  These give rise to $60$ (resp.\ $54$) distinct joint (resp.\ independent) coefficient measures.
\end{enumerate}
\end{maintheorem}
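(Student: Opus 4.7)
The plan is to combine the general structural theorem proved earlier in the paper---which describes the Sato--Tate group of any abelian threefold $\Qbar$-isogenous to the cube of a CM elliptic curve---with an explicit enumeration of twists of the Fermat and Klein quartics. By that theorem, for any such $A/k$ the identity component $\ST(A)^0$ is $\Unitary(1)^3$ (suitably embedded in $\USp(6)$ via the CM structure on the elliptic factor), and $\ST(A)$ is determined, up to conjugation, by a finite subgroup of the normalizer of $\Unitary(1)^3$ arising from the Galois action on the endomorphism algebra.

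First I would parametrize twists via Galois cohomology: the set of $k$-twists of $\tilde C_1^0$ (resp.~$\tilde C_7^0$) is in bijection with $H^1(G_k,\Aut(\tilde C_1^0))$ (resp.~$H^1(G_k,\Aut(\tilde C_7^0))$), where the automorphism groups have orders $96$ and $168$. Passing to Jacobians and applying the structural theorem, $\ST(\Jac(C))$ depends on the cocycle only through the finite subgroup it defines in the appropriate normalizer. The classification therefore reduces to enumerating, up to the relevant conjugacy, the finite subgroups of the normalizer of $\Unitary(1)^3$ that arise from cocycles valued in the two automorphism groups. A finite case analysis (amenable to computer algebra over these small groups) yields $54$ classes in the Fermat case and $23$ in the Klein case, giving the Sato--Tate group counts of~(i) and~(ii). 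Realizability over some number field for each class is verified by writing down an explicit twist.

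Next, for every resulting Sato--Tate group I would compute the joint coefficient measure $\mu_I$ and the three independent measures $\mu_{I_j}$. Because the Haar measure of $\ST(A)$ is the average over its component group of the Haar measure of $\Unitary(1)^3$ restricted to each coset, all moments $\M_{n_1,n_2,n_3}[\mu_I]$ and $\M_n[\mu_{I_j}]$ are finite rational sums of elementary integrals over the torus. A direct computation of these moments distinguishes all $54$ (resp.~$23$) joint measures in the Fermat (resp.~Klein) case, matching the Sato--Tate group counts, while the triples of independent measures collapse to $48$ and $22$, respectively. Part~(iii) follows by merging the two lists: $17$ Sato--Tate groups are common to both families ($54+23-17=60$), and the analogous arithmetic $48+22-16=54$ produces the combined independent measure count.

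The main obstacle is certifying, rather than refuting, coincidences of measures: distinguishing two measures requires only one differing moment, but proving equality requires that all moments agree. We handle this by expressing each joint measure as a finite linear combination, indexed by characters of $\Unitary(1)^3$, of standard measures, so that equality of two measures reduces to equality of finite coefficient vectors that can be checked once and for all. The collapses $54 \to 48$ and $23 \to 22$ in passing from joint to independent measures are exactly the new phenomenon announced in the abstract: in dimension $3$ the triple of marginal distributions no longer determines the joint distribution.
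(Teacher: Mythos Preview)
Your overall strategy is in the right spirit, but there are two concrete problems.

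First, a factual error: the identity component is not $\Unitary(1)^3$. Because $\Jac(C)_{\Qbar}\sim E^3$ for a \emph{single} CM elliptic curve $E$, the identity component of $\ST(C)$ is the one-dimensional torus $\ST(E^0,1)$ embedded \emph{diagonally} in $\USp(6)$ (this is the content of Theorem~\ref{theorem: ST groups}(ii): $\ST(C)=\ST(E^0,1)_3\cdot\iota(H)$). The normalizer you should be working in is correspondingly different, and this changes the bookkeeping of ``subgroups up to the relevant conjugacy.''

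Second, and more seriously, you assert that a finite case analysis yields $54$ Sato--Tate groups in the Fermat case, but you do not explain what ``the relevant conjugacy'' is or why it collapses anything. The natural first enumeration---subgroups $H\subseteq G_{C^0_1}$ up to conjugacy in $G_{C^0_1}$---gives $83$ classes, not $54$ (Remark~\ref{remark: pairs}(ii)). The reduction to $54$ is the substance of Proposition~\ref{proposition: conjSTgruops2}: two subgroups $H,H'$ give conjugate Sato--Tate groups whenever there is an isomorphism $\Psi\colon H\to H'$ with $\Psi(H_0)=H_0'$ matching the traces of the $3$-dimensional representation $\theta_{E^0,C^0}$ on $H_0$. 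Proving that this criterion actually forces conjugacy in $\GL_6(\C)$ requires handling the coset $H\setminus H_0$ separately (where one shows all traces vanish), and this is not a tautology. Your sketch hides this step inside ``amenable to computer algebra.''

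The paper's logic is also organized differently from yours, and the difference matters for rigor. Rather than computing Sato--Tate groups and then checking which measures coincide, the paper runs a sandwich: the moment computations of \S\ref{section: momsequence} give \emph{lower} bounds ($\geq 54$, $\geq 23$, $\geq 60$) on the number of joint coefficient measures, while the equivalence criterion of \S\ref{section: ST groups} gives \emph{upper} bounds ($\leq 54$, $\leq 23$, $\leq 60$) on the number of Sato--Tate groups. Since \eqref{equation: implication} forces (\#\,ST groups) $\geq$ (\#\,joint measures), equality follows on both sides simultaneously. This neatly avoids your ``main obstacle'' of certifying measure coincidences: no direct proof that two joint measures agree is ever needed, because the matching upper bound on Sato--Tate groups does that work. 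Your proposed fix---writing each measure as a finite linear combination indexed by characters of the identity component---could also work, but you would need to make it precise (and with the correct identity component).
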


One motivation for our work is a desire to extend the classification of Sato-Tate groups that is known for dimensions $g\le 2$ to dimension $3$.
Of the 52 Sato-Tate groups that arise for abelian surfaces (see \cite[Table 10]{FKRS12} for a list), 32 can be realized as the Sato-Tate group of the Jacobian of a twist of one of the two genus 2 curves with the largest automorphism groups, as shown in \cite{FS14}; these groups were the most difficult to treat in \cite{FKRS12} and notably include cases missing from the candidate list of trace distributions identified in \cite[Table 13]{KS09}.
While the classification of Sato-Tate groups in dimension 3 remains open, the 60 Sato-Tate groups identified in Theorem~\ref{theorem: Main} and explicitly described in \S\ref{section: ST groups} are likely to include many of the most delicate cases and represent significant progress toward this goal.
\smallskip

{\textbf{Overview of the paper.}} This article can be viewed as a genus $3$ analog of \cite{FS14}, where the Sato--Tate groups of the Jacobians of twists of the curves $y^2=x^5-x$ and $y^2=x^6+1$ were computed. However, there are two important differences in the techniques we use here; these are highlighted in the paragraphs below that outline our approach.  We also note \cite{FS16}, where the Sato-Tate groups of the Jacobians of certain twists of the genus 3 curves $y^2=x^7-x$ and $y^2=x^8+1$ are computed, and \cite{ACLLM}, where the Sato-Tate groups of the Jacobians of twists of the curve $y^2=x^8-14x^4+1$ are determined.
Like the Fermat and Klein quartics we consider here, these three curves represent extremal points in the moduli space of genus 3 curves, but they are all hyperelliptic, and their automorphism groups are smaller (of order 24, 32, 48, respectively).

As noted above, the Sato--Tate conjecture is known for abelian varieties that are $\Qbar$-isogenous to a product of CM abelian varieties \cite[Cor.\,15]{Joh17}. It follows that we can determine the set of independent coefficient measures $\{\mu_{I_j}\}_j$ by computing the sequences $\{\M_n[a_j]\}_{j,n}$, and similarly for~$\mu_I$ and the sequences $\{M_{n_1,\ldots,n_g}[a]\}_{n_1,\ldots,n_g}$. Closed formulas for these sequences are determined in \S\ref{section: cubes} in the more general setting of abelian threefolds defined over a number field~$k$ that are $\Qbar$-isogenous to the cube of an elliptic curve defined over~$k$; see Proposition~\ref{proposition: import} and Corollary~\ref{corollary: import}. This analysis closely follows the techniques developed in \cite[\S3]{FS14}.

In \S\ref{section: twists}, we specialize to the case of  Jacobians of twists of the Fermat and Klein quartics. In \S\ref{section: momsequence}, we obtain a complete list of possibilities for $\{\M_n[a_j]\}_{j,n}$: there are 48 in the Fermat case, 22 in the Klein case, and 54 when combined; see Corollary~\ref{corollary: det mom seq}. We also compute lower bounds on the number of possibilities for $\{\M_{n_1,n_2,n_3}[a]\}_{n_1,n_2,n_3}$ by computing the number of possibilities for the first several terms (up to a certain conveniently chosen bound) of this sequence. These lower bounds are 54 in the Fermat case, 23 in the Klein case, and 60 when combined; see Proposition~\ref{proposition: lowernumb}.
 
The first main difference with \cite{FS14} arises in \S\ref{section: ST groups}, where we compute the Sato--Tate groups of the twists of the Fermat and Klein quartics using the results of \cite{BK15}. Such an analysis would have been redundant in \cite{FS14}, since a complete classification of Sato--Tate groups of abelian surfaces was already available from \cite{FKRS12}. We show that there are at most 54 in the Fermat case, and at most 23 in the Klein case; see Corollaries~\ref{corollary: uppernumb1} and~\ref{corollary: uppernumb2}.
Combining the implications in \eqref{equation: implication} together with the lower bounds of \S\ref{section: momsequence} and upper bounds of \S\ref{section: ST groups} yields Theorem~\ref{theorem: Main} above. 

The second main difference with \cite{FS14} arises in \S\ref{section: curves}, where we provide explicit equations of twists of the Fermat and Klein quartics that realize each of the possible Sato--Tate groups. Here, the computational search used in \cite{FS14} is replaced by techniques developed in \cite{Lor17,Lor18} that involve the resolution of certain Galois embedding problems, and a moduli interpretation of certain twists $X_E(7)$ of the Klein quartic as twists of the modular curve $X(7)$, following \cite{HK00}.
In order to apply the latter approach, which also plays a key role in \cite{PSS07}, we obtain a computationally effective description of the minimal field over which the automorphisms of $X_E(7)$ are defined (see Propositions~\ref{proposition: modularK} and~\ref{proposition: phi7}), a result that may have other applications.

Finally, in \S\ref{section: computations}, we give an algorithm for the efficient computation of the $L$-polynomials of twists of the Fermat and Klein quartics
This algorithm combines an average polynomial-time for computing Hasse-Witt matrices of smooth plane quartics \cite{HS} with a result specific to our setting that allows us to easily derive the full $L$-polynomial at $\p$ from the Frobenius trace using the splitting behavior of $\p$ in certain extensions; see Proposition~\ref{proposition: trthetaMEA}.
Our theoretical results do not depend on this algorithm, but it played a crucial role in our work by allowing us to check our computations and may be of independent interest.
\smallskip

{\textbf{Acknowledgements.}} We thank Josep Gonz\'alez for his help with Lemma~\ref{lemma: embeddings}, and we are grateful to the Banff International Research Station for hosting a May 2017 workshop on Arithmetic Aspects of Explicit Moduli Problems where we worked on this article.
Fit\'e is grateful to the University of California at San Diego for hosting his visit in spring 2012, the period in which this project was conceived. Fit\'e received financial support from the German Research Council (CRC 701), the Excellence Program Mar\'ia de Maeztu MDM-2014-0445, and MTM2015-63829-P. Sutherland was supported by NSF grants DMS-1115455 and DMS-1522526. 
This project has received funding from the European Research Council (ERC) under the European Union’s Horizon 2020 research and innovation programme (grant agreement No 682152), and from the Simons Foundation~ (grant~\#550033).
\smallskip

{\textbf{Notation.}} Throughout this paper, $k$ denotes a number field contained in a fixed algebraic closure $\overline \Q$ of $\Q$. All the field extensions of $k$ we consider are algebraic and assumed to lie in $\Qbar$.
We denote by $G_k$ the absolute Galois group $\Gal(\Qbar/k)$. For an algebraic variety~$X$ defined over~$k$ and a field extension $L/k$, write $X_L$ for the algebraic variety defined over $L$ obtained from~$X$ by base change from~$k$ to~$L$. For abelian varieties~$A$ and~$B$ defined over~$k$, we write $A\sim B$ if there is an isogeny from $A$ to $B$ that is defined over~$k$. We use $M^\T$ to denote the transpose of a matrix~$M$. We label the isomorphism class ID$(H)=\langle n, m\rangle$ of a finite group $H$ according to the Small Groups Library \cite{SGL}, in which $n$ is the order of~$H$ and~$m$ distinguishes the isomorphism class of $H$ from all other isomorphism classes of groups of order~$n$.

\section{Equidistribution results for cubes of CM elliptic curves}\label{section: cubes}

Let $A$ be an abelian variety over $k$ of dimension $3$ such that $A_\Qbar\sim E_\Qbar^3$, where $E$ is an elliptic curve defined over $k$ with complex multiplication (CM) by an imaginary quadratic field $M$.
Let $L/k$ be the minimal extension over which all the homomorphisms from $E_\Qbar$  to $A_\Qbar$ are defined.
We note that $kM\subseteq L$, and we have $\Hom(E_\Qbar,A_\Qbar)\simeq\Hom(E_{L} ,A_{L} )$ and $A_{L} \sim_{L}  E_{L} ^3$.

Let $\sigma$ and $\overline\sigma$ denote the two embeddings of $M$ into $\Qbar$.
Consider
$$
\Hom(E_{L} ,A_{L} )\otimes_{M,\sigma}\Qbar \qquad \text{(resp.\ $\End(A_{L} )\otimes_{M,\sigma}\Qbar$)}\,,
$$
where the tensor product is taken via the embedding $\sigma\colon M\hookrightarrow\Qbar$. Letting $\Gal(L/kM)$ act trivially on $\Qbar$, it acquires the structure of a $\overline \Q[\Gal(L/kM)]$-module of dimension 3 (resp.~9) over~$\Qbar$, and similarly for $\overline\sigma$.

\begin{definition}\label{definition: thetaMsigma}
Let $\theta\coloneqq\theta_{M,\sigma}(E,A)$ (resp.\ $\theta_{M,\sigma}(A)$) denote the representation afforded by the module $\Hom(E_{L} ,A_{L} )\otimes_{M,\sigma}\Qbar$ (resp.\ $\End(A_{L} )\otimes_{M,\sigma}\Qbar$), and similarly define $\overline\theta \coloneqq \theta_{M,\overline\sigma}(E,A)$ and $\theta_{M,\overline\sigma}(A)$.
Let $\theta_\Q\coloneqq\theta_\Q(E,A)$ (resp.\ $\theta_\Q(A)$) denote the representation afforded by the $\Q[\Gal({L} /k)]$-module $\Hom(E_{L},A_{L})\otimes\Q$ (resp.\ $\End(A_L)\otimes\Q$).
\end{definition}
For each $\tau \in \Gal({L} /kM)$, we write
$$\det(1-\theta(\tau)T)=1+a_1(\theta)(\tau)T+a_2(\theta)(\tau)T^2+a_3(\theta)(\tau)T^3\,, $$
so that $a_1(\theta)=-\Tr\theta$ and $a_3(\theta)=-\det(\theta)$.

Fix a subextension $F/kM$ of $L/kM$, and let $S$ be the set of primes of~$F$ for which $A_F$ or $E_F$ has bad reduction. Note that by \cite[Thm. 4.1]{Sil92} the set $S$ contains the primes of $F$ ramified in $L$. For $z\in M$, write $|z|\coloneqq\sqrt{\sigma(z)\cdot\overline\sigma(z)}$. For $\p \not \in S$, there exists $\alpha(\p)\in M$, such that $|\alpha(\p)|=N(\p)^{1/2}$ and
\begin{equation}\label{equation: alpha}
a_1(E_F)(\p)=-\frac{\sigma(\alpha(\p))+\overline\sigma(\alpha(\p))}{N(\p)^{1/2}}\,.
\end{equation}

\begin{proposition}\label{proposition: import}
Let $A$ be an abelian variety of dimension $3$ defined over $k$ such that $A_L\sim_L E_L^3$, where $E$ is an elliptic curve defined over $k$ with CM by the imaginary quadratic field $M$.
Suppose that $a_3(\theta)(\tau)$ is rational for every $\tau\in G:=\Gal(L/kM)$.
Then for $i=1,2,3$, the sequence $a_i(A_{kM})$ is equidistributed on $I_i=\left[-\binom{2g}{i},\binom{2g}{i}\right]$  with respect to a measure that is continuous up to a finite number of points and therefore uniquely determined by its moments. For $n\geq 1$, we have $\M_{2n-1}[a_1(A_{kM})]=\M_{2n-1}[a_3(A_{kM})]=0$ and:
$$
\begin{array}{lll}
\M_{2n}[a_1(A_{kM})] & = &\frac{1}{|G|}\sum_{\tau\in G}|a_1(\theta)(\tau)|^{2n} \binom{2n}{n}\,,\\[6pt]

\M_n[ a_2(A_{kM})] & = &\frac{1}{|G|}\sum_{\tau\in G}\sum_{i=0}^{n}\binom{n}{i}\binom{2i}{i} |a_2(\theta)(\tau)|^i \left(|a_1(\theta)(\tau)|^2-2\cdot |a_2(\theta)(\tau)|\right)^{n-i}\,,\\[6pt]

\M_{2n}[a_3(A_{kM})] &= & \frac{1}{|G|}\left(\sum_{\tau\in G}\sum_{i=0}^{n}\binom{2n}{2i}\sum_{j=0}^{2i}\sum_{k=0}^{n-i}\binom{2i}{j}(r_1(\tau)-3)^{2i-j}\right.\\[5pt]
&& \,\,\,\,\,\,\,\,\,\,\,\left.\cdot r_2(\tau)^{2n-2i}\binom{n-i}{k}4^k(-1)^{n-i-k}\binom{2j+2n-2k}{j+n-k}\right)\,.
\end{array}
$$
Here $r_1(\tau)$ and $r_2(\tau)$ are the real and imaginary parts of $a_3(\theta)(\tau)a_2(\theta)(\tau)\overline a_1(\theta)(\tau)$, respectively.
\end{proposition}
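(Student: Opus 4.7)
The plan is to express each $a_i(A_{kM})(\mathfrak{p})$ as an explicit trigonometric function of a single angle $\phi$ (via $u=\sigma(\alpha(\mathfrak{p}))/N(\mathfrak{p})^{1/2}=e^{i\phi}$) with coefficients depending on $\theta(\Frob_\mathfrak{p})$, to invoke the CM Sato--Tate conjecture for joint equidistribution of the pair $(\Frob_\mathfrak{p},\phi)$, and finally to compute the moments by elementary trigonometric integrals.

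Decomposing $V_\ell(A)\otimes\Qbar_\ell$ into $\sigma$- and $\overline\sigma$-isotypic parts under the CM action of $M$, and using that $\Frob_\mathfrak{p}$ acts on the $\sigma$-summand as the scalar $\sigma(\alpha(\mathfrak{p}))$ times the matrix $\theta(\Frob_\mathfrak{p})$ (and analogously on the $\overline\sigma$-summand), the normalized $L$-polynomial factors as
\[
\overline L_\mathfrak{p}(A_{kM},T)=\det(1-u\,\theta(\Frob_\mathfrak{p})T)\cdot\det(1-\overline u\,\overline\theta(\Frob_\mathfrak{p})T).
\]
Abbreviating $a_i:=a_i(\theta)(\tau)$ with $\tau=\Frob_\mathfrak{p}$, direct expansion gives $a_1(A_{kM})=u a_1+\overline u\,\overline{a_1}$, $a_2(A_{kM})=u^2 a_2+\overline u^2\,\overline{a_2}+|a_1|^2$, and, using the hypothesis $a_3\in\Q$, $a_3(A_{kM})=a_3(u^3+\overline u^3)+u\,a_2\overline{a_1}+\overline u\,\overline{a_2}a_1$.

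By Johansson's theorem cited in the introduction, the Sato--Tate conjecture holds for $A_{kM}$. Its Sato--Tate group has identity component the central torus $\mathrm{U}(1)$ parametrized by $u$ and component group $G=\Gal(L/kM)$ acting via $\theta\oplus\overline\theta$; hence the Haar measure on $\ST(A_{kM})$ disintegrates as the product of the normalized counting measure on $G$ with the uniform Haar measure on $S^1$. Therefore $(\tau(\mathfrak{p}),u(\mathfrak{p}))$ is jointly equidistributed on $G\times S^1$, so
\[
\M_n[a_i(A_{kM})]=\frac{1}{|G|}\sum_{\tau\in G}\frac{1}{2\pi}\int_0^{2\pi}a_i(A_{kM})(\phi,\tau)^n\,d\phi.
\]
The involution $u\mapsto-u$ preserves this measure and negates both $a_1(A_{kM})$ and $a_3(A_{kM})$ (the latter using $a_3\in\Q$), killing their odd moments.

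For $a_1(A_{kM})=2|a_1|\cos(\phi+\arg a_1)$ the identity $\frac{1}{2\pi}\int_0^{2\pi}\cos^{2n}\phi\,d\phi=\binom{2n}{n}/4^n$ gives the stated formula immediately. For $a_2(A_{kM})=2|a_2|\cos\eta+|a_1|^2$ (with $\eta=2\phi+\arg a_2$ uniform), the binomial theorem combined with the rearrangement $|a_1|^2=(|a_1|^2-2|a_2|)+2|a_2|$ yields the proposition's double sum. For $a_3(A_{kM})$, since $\theta$ factors through the finite group $G$ the value $a_3=-\det\theta(\tau)$ is a rational root of unity, so $a_3\in\{\pm1\}$; writing $a_2\overline{a_1}=(r_1+ir_2)/a_3$ and applying the Chebyshev identity $u^3+\overline u^3=2(4\cos^3\phi-3\cos\phi)$, one obtains $a_3(A_{kM})=2a_3[4\cos^3\phi+(r_1-3)\cos\phi-r_2\sin\phi]$. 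Raising to the $2n$-th power (using $a_3^{2n}=1$), expanding first the binomial in the two summands, then re-expanding the inner power $[4\cos^2\phi+r_1-3]^{2i}$, substituting $\sin^{2n-2i}\phi=(1-\cos^2\phi)^{n-i}$, integrating each $\cos^{2m}\phi$-monomial against $d\phi/(2\pi)$, and finally re-indexing $k\mapsto n-i-k$ produces the nested sum in the statement. The main obstacle is this $a_3$ computation: the triple-angle identity is what produces the peculiar $r_1-3$ shift, and the rationality of $a_3$ is used essentially to pull $a_3^{2n}=1$ outside the expansion. Continuity of the limiting measure away from finitely many points, and hence determinacy by moments on the bounded interval $I_i$, follows because for all but finitely many $\tau\in G$ the fiber function $\phi\mapsto a_i(A_{kM})(\phi,\tau)$ is a non-constant real-analytic trigonometric polynomial, whose pushforward of uniform measure is absolutely continuous except at its finitely many critical values; the exceptional $\tau$ contribute at most finitely many atoms.
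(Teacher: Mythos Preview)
Your proof is correct and follows essentially the same route as the paper: both decompose $V_\ell(A_{kM})$ into its $\sigma$- and $\overline\sigma$-isotypic pieces to obtain the factorization $\overline L_\mathfrak p(A_{kM},T)=\det(1-u\theta T)\det(1-\overline u\,\overline\theta T)$, both rewrite $a_i(A_{kM})$ as trigonometric polynomials in a single angle with coefficients depending on $\theta(\tau)$, and both reduce the moment computation to the elementary integral $\frac{1}{2\pi}\int_0^{2\pi}\cos^{2m}\phi\,d\phi=\binom{2m}{m}/4^m$. The only cosmetic difference is that the paper packages the joint equidistribution of $(\tau,\phi)$ by citing \cite[Prop.~3.6]{FS14} (equidistribution of $\alpha_{1,c}$ along each Chebotarev class) together with Chebotarev, whereas you invoke Johansson's Sato--Tate theorem for the whole package at once; and for $a_2$ the paper introduces $z_2=(a_2/|a_2|)^{1/2}$ to write $a_2(A_{kM})=|a_2|(z_2\alpha_1+\overline{z_2}\overline{\alpha_1})^2-2|a_2|+|a_1|^2$, which is exactly your half-angle rearrangement in disguise.
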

\begin{proof}
The proof follows the steps of \cite[\S3.3]{FS14}. Define
$$V_\sigma(A)=V_\ell(A_{kM})\otimes_{M\otimes \Q_\ell}\Qbar_\ell\,,$$
where the tensor product is taken relative to the map of $\Q_\ell$-algebras $M\otimes\Q_\ell\rightarrow\Qbar_\ell$ induced by $\sigma$; similarly define $V_{\overline\sigma}(A)$, $V_{\sigma}(E)$, and $V_{\overline\sigma}(E)$. We then have isomorphisms of $\overline \Q_\ell[G_{kM}]$-modules
$$
V_\ell(A_{kM})\simeq V_\sigma(A)\oplus V_{\overline\sigma}(A)\,,\qquad
V_\ell(E_{kM})\simeq V_\sigma(E)\oplus V_{\overline\sigma}(E)\,.
$$
It follows from Theorem 3.1 in \cite{Fit10}, that
$$
V_\sigma(A)\simeq \theta_{M,\sigma}(E,A)\otimes V_\sigma(E)\,,\qquad V_{\overline\sigma}(A)\simeq \theta_{M,\overline\sigma}(E,A)\otimes V_{\overline\sigma}(E)\,.
$$
We thus have an isomorphism of $\overline \Q_\ell[G_{kM}]$-modules
\begin{equation}\label{equation:flip}
V_\ell(A_{kM})\simeq \bigl(\theta_{M,\sigma}(E,A)\otimes V_\sigma(E)\bigr)\,\oplus\,\bigl(\theta_{M,\overline\sigma}(E,A)\otimes V_{\overline\sigma}(E)\bigr)\,.
\end{equation}
For each prime $\p\not \in S$, let us define
$$
\alpha_1(\p)\coloneqq\frac{\sigma(\alpha(\p))}{N(\p)^{1/2}}\,,\qquad
\overline\alpha_1(\p)\coloneqq\frac{\overline\sigma(\alpha(\p))}{N(\p)^{1/2}}\,,
$$
where $\sigma(\alpha(\p))$, as in equation~(\ref{equation: alpha}), gives the action of $\Frob_\p$ on $V_\sigma(E)$. It follows from (\ref{equation:flip}) that
\begin{equation}\label{equation: icoeffs}
\begin{array}{l}
a_1(A_{kM})(\p)=a_1(\p)\alpha_1(\p)+\overline a_1(\p)\overline\alpha_1(\p)\,,\\[4pt]

a_2(A_{kM})(\p)= a_2(\p) \alpha_1(\p)^2+\overline a_2(\theta)\overline\alpha_1(\p)^2+
a_1(\p)\overline a_1(\p) \,,\\[4pt]

a_3(A_{kM})(\p)= a_3(\p)\alpha_1(\p) ^3+\overline a_3(\p)\overline\alpha_1(\p)^3+\overline a_1(\p)a_2(\p)\alpha_1(\p)+a_1(\p)\overline a_2(\p)\overline\alpha_1(\p)\,,
\end{array}
\end{equation}
where to simplify notation we have written $a_i(\mathfrak p):=a_i(\theta)(\Frob_\mathfrak p)$ and $\overline a_i(\mathfrak p):=a_i(\overline \theta)(\Frob_\mathfrak p)$. Let $r_1(\p)$ and $r_2(\p)$ denote the real and imaginary parts of $a_3(\p)a_2(\p)\overline a_1(\p)$, respectively.  We have $a_3(\p)^2=1$, since $a_3(\p)$ is a rational root of unity, and we can rewrite the above expressions as
$$
\begin{array}{lll}
a_1(A_{kM})(\p)&=&|a_1(\p)|\left(z_1(\p)\alpha_1(\p)+\overline z_1(\p)\overline\alpha_1(\p)\right)\,,\\[6pt]

a_2(A_{kM})(\p)&= &|a_2(\p)|\left(z_2(\p)\alpha_1(\p)+\overline z_2(\p)\overline\alpha_1(\p)\right)^2-2|a_2(\p)|+|a_1(\p)|^2\,,\\[6pt]

a_3(A_{kM})(\p)&= &a_3(\p)\big(\left(\alpha_1(\p)+\overline\alpha_1(\p)\right)^3+(r_1(\p)-3)\left(\alpha_1(\p)+\overline\alpha_1(\p)\right)\\[2pt]
&&\pm r_2(\p)\sqrt{4-(\alpha_1(\p)+\overline\alpha_1(\p))^2}\big)\,,
\end{array}
$$
where
$$
z_1(\p)=\frac{a_1(\p)}{|a_1(\p)|},\quad z_2(\p)=\left(\frac{a_2(\p)}{|a_2(\p)|}\right)^ {1/2} \in \Unitary(1)\,.
$$

Let $\alpha_1$ denote the sequence $\{\alpha_1(\p_i)\}_{i\geq 1}$ and, for each conjugacy class $c$ of $\Gal(L/kM)$, let $\alpha_{1,c}$ denote the subsequence of $\alpha_1$ obtained by restricting to primes $\p$ of $\not \in S$ such that $\Frob_\p=c$. By the translation invariance of the Haar measure and \cite[Prop. 3.6]{FS14}, for $z\in\Unitary(1)$ and $i\geq 1$ we have
\begin{equation}\label{equation: 36FS}
\M_i[z\alpha_{1,c}+\overline z\, \overline\alpha_{1,c}]=\M_i[\alpha_{1,c}+\overline\alpha_{1,c}]=
\begin{cases}
\binom{i}{i/2} & \text{if $i$ is even,}\\
0 & \text{if $i$ is odd}.
\end{cases}
\end{equation}
The formulas for $\M_{2n}[a_1(A_{kM})]$, $\M_n[a_2(A_{kM})]$, $\M_{2n}[a_3(A_{kM})]$ follow immediately from (\ref{equation: 36FS}) and the Chebotarev Density Theorem (see \cite[Prop. 3.10]{FS14} for a detailed explanation of a similar calculation).
\end{proof}

\begin{remark}
In the statement of the proposition, we included the hypothesis that $a_3(\theta)$ is rational, which is satisfied for Jacobians of twists of the Fermat and Klein curves (see Section \ref{section: arittwist}), because this makes the formulas considerably simpler. This hypothesis is not strictly necessary; one can similarly derive a more general formula without it.
\end{remark}

\begin{corollary}\label{corollary: import} Let $A$ be an abelian variety of dimension $3$ defined over $k$ such that $A_L\sim_L E_L^3$, where $E$ is an elliptic curve defined over $k$ with CM by the quadratic imaginary field $M$, with $k\ne kM$. Suppose that $a_3(\theta)(\tau)$ is rational for $\tau\in G:=\Gal(L/kM)$. Then for $i=1,2,3$, the sequence $a_i(A_{k})$ is equidistributed on $I_i=\left[-\binom{2g}{i},\binom{2g}{i}\right]$  with respect to a measure that is continuous up to a finite number of points and therefore uniquely determined by its moments. For $n\geq 1$, we have $\M_{2n-1}[a_1(A_{k})]=\M_{2n-1}[a_3(A_{k})]=0$ and:
$$
\begin{array}{lll}
\M_{2n}[ a_1(A_{k})] & = &\frac{1}{2|G|}\sum_{\tau\in G}|a_1(\theta)(\tau)|^{2n} \binom{2n}{n}\,,\\[6pt]

\M_n[a_2(A_{k})] & = &\frac{1}{2|G|}\Bigl(\sum_{\tau\in G}\sum_{i=0}^{n}\binom{n}{i}\binom{2i}{i} |a_2(\theta)(\tau)|^i \left(|a_1(\theta)(\tau)|^2-2\cdot |a_2(\theta)(\tau)|\right)^{n-i}\\[5pt]

& & \,\,\,\,\,\,\,\,\,\,\,\,\,+\ \overline o(2)3^n+\overline o(4)(-1)^n+\overline o(8)+\overline o(12)2^ n\Bigr),\\[6pt]

\M_{2n}[a_3(A_{k})] &= & \frac{1}{2|G|}\Bigl(\sum_{\tau\in G}\sum_{i=0}^{n}\binom{2n}{2i}\sum_{j=0}^{2i}\sum_{k=0}^{n-i}\binom{2i}{j}\cdot(r_1(\tau)-3)^{2i-j}\\[5pt]

&& \,\,\,\,\,\,\,\,\,\,\,\,\,\,\cdot\  r_2(\tau)^{2n-2i}\binom{n-i}{k}4^k(-1)^{n-i-k}\binom{2j+2n-2k}{j+n-k}\Bigr)\,.
\end{array}
$$
Here $\overline o(n)$ denotes the number of elements in $\Gal(L/k)$ not in $G$ of order $n$,
and $r_1(\tau)$ and $r_2(\tau)$ are the real and imaginary parts of $a_3(\theta)(\tau)a_2(\theta)(\tau)\overline a_1(\theta)(\tau)$, respectively.
\end{corollary}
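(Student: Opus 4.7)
The plan is to adapt the proof of Proposition \ref{proposition: import} by partitioning the primes $\p$ of $k$ (outside $S$ and the finite set of primes ramified in $kM$) according to their splitting in the quadratic extension $kM/k$: each such $\p$ is either split or inert in $kM$. For $\p$ split, with $\p\cO_{kM}=\mathfrak P_1\mathfrak P_2$, we have $N(\mathfrak P_i)=N(\p)$ and $L_\p(A_k,T)=L_{\mathfrak P_i}(A_{kM},T)$, so $a_i(A_k)(\p)=a_i(A_{kM})(\mathfrak P_i)$. Using $\pi_{kM}(x)\sim\pi_k(x)$ (the primes of $kM$ above inert primes of $k$ have norm $N(\p)^2$, and hence contribute a negligible $O(\sqrt{x}/\log x)$ to $\pi_{kM}(x)$), together with the fact that each split $\p$ accounts for two primes of $kM$ with the same normalized $L$-polynomial, one obtains
\[
\lim_{x\to\infty}\frac{1}{\pi_k(x)}\sum_{\substack{\p\text{ split}\\ N(\p)\le x}}a_i(A_k)(\p)^n \;=\; \tfrac{1}{2}\M_n[a_i(A_{kM})],
\]
which accounts for the factor $\tfrac{1}{2|G|}$ in the stated formulas.

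Next I would analyze the inert primes. If $\p$ is inert in $kM$ with $c=\Frob_\p\in\Gal(L/k)\setminus G$, then the prime $\mathfrak P$ of $kM$ above $\p$ has $N(\mathfrak P)=N(\p)^2$ and $E_{\mathfrak P}$ is supersingular; a standard computation over $\mathbb F_{N(\p)^2}$ shows that Frobenius acts on $V_\ell(E_{kM})$ as the scalar $-N(\p)$, so $\sigma(\alpha(\mathfrak P))=\overline\sigma(\alpha(\mathfrak P))=-N(\p)$ and hence $\alpha_1(\mathfrak P)=-1$. In the decomposition $V_\ell(A_{kM})\otimes_{\Q_\ell}\Qbar_\ell\simeq V_\sigma(A)\oplus V_{\overline\sigma}(A)$ from the proof of Proposition \ref{proposition: import}, the element $c$ swaps the two summands because it acts as the nontrivial element of $\Gal(M/\Q)$ on $M$. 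Therefore $\Frob_\p$ is block anti-diagonal, its square acts on $V_\sigma(A)$ as $\theta(c^2)\sigma(\alpha(\mathfrak P))=-N(\p)\cdot\theta(c^2)$, and a short computation of the characteristic polynomial of the $6$-dimensional operator $\Frob_\p$ yields
\[
\overline L_\p(A_k,T)\;=\;\det\bigl(I+\theta(c^2)\,T^2\bigr).
\]
In particular $a_1(A_k)(\p)=a_3(A_k)(\p)=0$ and $a_2(A_k)(\p)=\Tr(\theta(c^2))=-a_1(\theta)(c^2)$.

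The assembly for $a_1$ and $a_3$ is then immediate: inert primes contribute $0$, and only the split half of Proposition \ref{proposition: import} survives, matching the stated formulas. For $a_2$ the split primes contribute the expected half, and the inert contribution is evaluated by the Chebotarev density theorem on $\Gal(L/k)$:
\[
\lim_{x\to\infty}\frac{1}{\pi_k(x)}\sum_{\substack{\p\text{ inert}\\ N(\p)\le x}}a_2(A_k)(\p)^n\;=\;\frac{1}{2|G|}\sum_{c\in\Gal(L/k)\setminus G}\Tr(\theta(c^2))^n.
\]
To put this sum into the stated form I would classify $c\in\Gal(L/k)\setminus G$ by its order. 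The palindromicity of $\overline L_\p(A_k,T)$ combined with the hypothesis that $a_3(\theta)$ takes rational values forces $\det(\theta(c^2))=1$ and $\Tr(\theta(c^2))\in\R$; under these constraints a short eigenvalue analysis shows that for $c$ of orders $2,4,8,12$ the only admissible configurations of $\theta(c^2)$ give $\Tr(\theta(c^2))=3,-1,1,2$ respectively, producing the four terms $\overline o(2)3^n$, $\overline o(4)(-1)^n$, $\overline o(8)$, $\overline o(12)2^n$.

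The hard part will be the clean derivation of the $L$-polynomial formula in the second step: identifying $\alpha_1(\mathfrak P)=-1$ from the supersingular reduction over the degree-$2$ residue field (which is importantly different from the $\pm i$ that arises over $\mathbb F_{N(\p)}$), and verifying that the swap structure of $\Frob_\p$ on $V_\sigma\oplus V_{\overline\sigma}$ combines with it to produce the compact formula $\overline L_\p(A_k,T)=\det(I+\theta(c^2)T^2)$. Once this is in hand, the subsequent classification of elements of $\Gal(L/k)\setminus G$ by order is a routine case analysis.
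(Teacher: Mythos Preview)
Your approach is correct and takes a genuinely different route from the paper. The paper never derives your identity $\overline L_\p(A,T)=\det(I+\theta(c^2)T^2)$; for inert $\p$ it instead restricts to degree-one primes satisfying $\Q(\sqrt p)\cap\Q(\zeta_{4N})=\Q$, invokes the Rankin--Selberg polynomial $\overline L_\p(E,\theta_\Q,T)$ from \cite{FS14} (which $\overline L_\p(A,T)$ divides), and combines this with $\overline L_\p(E,T)=1+T^2$ and the fact that the roots of $D(T,\tau)=\det(1-\theta_\Q(\tau)T)$ are quotients of roots of $\overline L_\p(A,T)$ and $\overline L_\p(E,T)$. From these it deduces $a_1=a_3=0$ via a rationality argument, pins $\overline L_\p(A,T)$ down to one of five explicit palindromic sextics $P_a(T)=1+aT^2+aT^4+T^6$ with $a\in\{-1,0,1,2,3\}$, and matches $a$ to $\ord(\tau)$ case by case through the factorizations of the $P_a$. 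Your block anti-diagonal argument is more direct: $a_1=a_3=0$ is automatic, $a_2=\Tr\theta(c^2)$ is explicit, and no auxiliary prime restriction is needed.

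One point in your case analysis needs tightening. The constraints ``$\det\theta(c^2)=1$ and $\Tr\theta(c^2)\in\R$'' do force the eigenvalue multiset to be $\{1,\zeta,\overline\zeta\}$, but by themselves they do not bound $\ord(c)$: for that you need $\Tr\theta(c^2)\in\Z$. This holds because $N(\p)\cdot a_2(A)(\p)\in\Z$ (from $L_\p\in\Z[T]$), while $\Tr\theta(c^2)$ is an algebraic integer as a sum of roots of unity, hence a rational integer. Then $1+2\Re\zeta\in\Z$ forces $\Re\zeta\in\{0,\pm\tfrac12,\pm1\}$, so $\ord(c^2)\in\{1,2,3,4,6\}$, giving exactly $\ord(c)\in\{2,4,6,8,12\}$ with traces $3,-1,0,1,2$. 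Note that $\ord(c)=6$ gives trace $0$ and hence contributes nothing for $n\ge1$; this is why no $\overline o(6)$ term appears, but it belongs in the analysis. Finally, you implicitly use that $\theta$ is faithful on $G$ to identify $\ord(\theta(c^2))$ with $\ord(c^2)$; this follows from the minimality of $L$.
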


\begin{proof} For primes of $k$ that split in $kM$, we invoke Proposition \ref{proposition: import}. Let $N$ be such that $\tau^N= 1$ for every $\tau\in \Gal(L/k)\setminus G$ (the present proof shows \emph{a posteriori} that one can take $N=24$, but for the moment it is enough to know that such an $N$ exists). For primes~$\p$ of $k$ that are inert in $kM$, we will restrict our analysis to those also satisfy:
\begin{enumerate}[(a)] 
\item $\p$ has absolute residue degree $1$, that is, $N(\p)=p$ is prime. 
\item $\p$ is of good reduction for both $A$ and $E$. 
\item $\Q(\sqrt p)\cap \Q(\zeta_{4N})=\Q$, where $\zeta_{4N}$ is a primitive $4N$-th root of unity.
\end{enumerate}
These conditions exclude only a density zero set of primes and thus do not affect the computation of moments.

Now define $D(T,\tau)\coloneqq\det(1-\theta_{\Q}(E,A)(\tau)T)$ for $\tau\in\Gal(L/k)\setminus G$, and let~$\p$ be such that $\Frob_\p=\tau$. In the course of the proof of \cite[Cor.\ 3.12]{FS14} it is shown that:
\begin{enumerate} [(i)]
\item The polynomial $\overline L_\p(A,T)$ divides the Rankin-Selberg polynomial $\overline L_\p(E,\theta_\Q(E,A),T)$.
\item The roots of $D(T,\tau)$ are quotients of roots of $\overline L_\p(A,T)$ and $\overline L_\p(E,T)$.
\end{enumerate}

Since $\overline L_\p(E,T)=1+T^2$ and the roots of $D(T,\tau)$ are $N$-th roots of unity, it follows from (i) that the roots of $\overline L_\p(A,T)$ are $4N$-th roots of unity. In particular $a_1(A)(\p)$, $a_2(A)(\p)$, $a_3(A)(\p) \in \Z[\zeta_{4N}]$. But (a) implies that 
\begin{equation}\label{equation: zerorationality}
\sqrt p \cdot a_1(A)(\p) \in \Z, \qquad p\cdot a_2(A)(\p) \in \Z,\qquad p\sqrt p \cdot a_3(A)(\p) \in \Z\,, 	
\end{equation}
which combined with (c) implies $a_1(A)(\p)=a_3(A)(\p)=0$. This yields the desired moment formulas for $a_1(A_k)$ and $a_3(A_k)$, leaving only $a_2(A_k)$ to consider.

From \eqref{equation: zerorationality} we see that $\overline L(A,T)$ has rational coefficients.  Both $\overline L_\p(E,T)$ and $D(T,\tau)$ have integer coefficients, hence so does $\overline L_\p(E,\theta_\Q(E,A),T)$. Moreover, $\overline L_\p(E,\theta_\Q(E,A),T)$ is also primitive, which by (i) and Gauss' Lemma implies that $\overline L(A,T)$ has integer coefficients.
The Weil bounds then imply (see \cite[Prop.\ 4]{KS08}, for example), that the polynomial $\overline L_{\p}(A,T)$ has the form
\begin{equation}\label{equation: polout}
1+aT^2+aT^4+T^6\eqqcolon P_a(T)\,,
\end{equation}
form some $a\in\{-1,0,1,2,3\}$. To compute the moments of $a_2(A_k)$, it remains only to to determine how often each value of $a$ occurs as $\tau$ ranges over $G$.
We have
\begin{equation}\label{equation: supersingular factors}
\begin{array}{l}
P_{-1}(T)=(1-T)^2(1+T)^2(1+T^2),\\[4pt]
P_{0}(T)=(1+T^2)(1-T^2+T^4),\\[4pt]
P_{1}(T)=(1+T^2)(1+T^4),\\[4pt]
P_{2}(T)=(1-T+T^2)(1+T+T^2)(1+T^2),\\[4pt]
P_{3}(T)=(1+T^2)^3.\\
\end{array}
\end{equation}
 The integer $\ord(\tau)$ is even and then condition (ii) and the specific shape of the $P_a(T)$ imply that the only possible orders of $\tau$ are $2$, $4$, $6$, $8$, $12$. If $\ord(\tau)=2$, then all the roots of $\overline L_\p(E,\theta_\Q(E,A),T)$ are of order $4$. By (i), so are the roots of $P_a(T)$, and (\ref{equation: supersingular factors}) implies that $a=3$.

If $\ord(\tau)=4$, then all the roots of $\overline L_\p(E,\theta_\Q(E,A),T)$ are of order dividing $4$. Thus so are the roots of $P_a(T)$, which leaves the two possibilities $a=-1$ or $a=3$. But (ii) implies that the latter is not possible: if $a=3$, then the roots of $D(T,\tau)$ would all be of order dividing $2$ and this contradicts the fact that $\tau$ has order $4$. Thus $a=-1$.

If $\ord(\tau)=6$, then by (ii) we have $a\not=-1,1,3$ (otherwise the order of $\tau$ would not be divisible by 3). If $a=2$, then again by (ii) the polynomial $D(T,\tau)$ would have a root of order at least 12, which is impossible for $\ord(\tau)=6$. Thus $a=0$.

If $\ord(\tau)=8$, then $\overline L_\p(E,\theta_\Q(E,A),T)$ has at least 8 roots of order 8 and thus $P_a(T)$ has at least a root of order $8$. Thus $a=1$.

If $\ord(\tau)=12$, then  by (ii) we have $a\not=-1,1,3$ (otherwise the order of $\tau$ would not be divisible by 3). If $a=0$, then again by (ii) the polynomial $D(T,\tau)$ would only have roots of orders $1$, $2$, $3$ or $6$, which is incompatible for $\ord(\tau)=12$. Thus $a=2$.
\end{proof}

\section{The Fermat and Klein quartics}\label{section: twists}

The Fermat and the Klein quartics admit models over~$\Q$ given by the equations~$\tilde C^0_1$ and~$\tilde  C^0_7$ of \eqref{equation: Fermatandklein}, respectively.
The Jacobian of $\tilde C^0_1$ is $\Q$-isogenous to the cube of an elliptic curve defined over~$\Q$ (see Proposition~\ref{proposition: decomposition}), but this is not true for $\tilde C^0_7$, which leads us to choose a different model for the Klein quartic.
Let us define $C^0_1:=\tilde C^0_1$ and
$$
C^0_7\colon x^4+y^4+z^4+6(xy^3+yz^3+zx^3)-3(x^2y^2+y^2z^2+z^2x^2)+3xyz(x+y+z)=0\,.
$$
The model $C^0_7$ is taken from \cite[(1.22)]{Elk99}, and its Jacobian is $\Q$-isogenous to the cube of an elliptic curve defined over~$\Q$, as we will prove below.
One can explicitly verify that the curve $C^0_7$ is $\Qbar$-isomorphic to the Klein quartic by using \eqref{equation: autos7} below to show that
$$
\text{ID}(\Aut((C^0_7)_{\Q(\sqrt{-7})}))=\langle 168, 42\rangle.
$$
One similarly verifies that $C_1^0$ is $\Qbar$-isomorphic to the Fermat quartic by using  \eqref{equation: autos1} below to show
$$
\text{ID}(\Aut((C^0_1)_{\Q(i)}))\simeq \langle 96,64\rangle.
$$

Let $E^0_1$ and $E^0 _7$ be the elliptic curves over $\Q$ given by the equations
\begin{equation}\label{eq:E0}
E^0_1\colon y^2z=x^3+xz^2\,,
\qquad
E^0_7\colon y^2z = x^3 - 1715xz^2 + 33614z^3\,,
\end{equation}
with Cremona labels \href{http://www.lmfdb.org/EllipticCurve/Q/64a4}{\texttt{64a4}} and \href{http://www.lmfdb.org/EllipticCurve/Q/49a3}{\texttt{49a3}}, respectively.
We note that $j(E^0_1)=2^6\cdot 3^3$ and $j(E^0_7)=-3^3\cdot 5^3$, thus $E^0_1$ has CM by the ring of integers of $\Q(i)$ and $E^0_7$ has CM by the ring of integers of $\Q(\sqrt{-7})$. For future reference, let us fix some notation. The automorphisms
\begin{equation}\label{equation: autos1}
\begin{cases}
s_1([x:y:z])=[z:x:y]\,,\\
t_1([x:y:z])=[-y:x:z]\,,\\
u_1([x:y:z])=[ix:y:z]
\end{cases}
\end{equation}
generate $\Aut((C^0_1)_\Qbar)$, whereas the automorphisms
\begin{equation}\label{equation: autos7}
\begin{cases}
s_7([x:y:z])=[y : z : x]\,,\\
t_7([x:y:z])=[-3x-6y+2z :-6x+2y-3z : 2x-3y-6z]\,,\\
u_7([x:y:z])=[-2x+ay-z : a x-y+(1-a)z :-x+(1-a)y-(1+a)z]\,,
\end{cases}
\end{equation}
with
$$
a\coloneqq\frac{-1+\sqrt{-7}}{2}=\zeta_7+\zeta_7^2+\zeta_7^4\,,
$$
generate $\Aut((C^0_7)_\Qbar)$.

\begin{proposition}\label{proposition: decomposition} For $d=1$ or $7$, the Jacobian of $C^0_d$ is $\Q$-isogenous to the cube of $E_d^0$.

\end{proposition}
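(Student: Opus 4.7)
My plan is to construct, in each case, a $\Q$-morphism $(E^0_d)^3 \to \Jac(C^0_d)$ and verify that its induced map on cotangent spaces is injective (hence an isomorphism by dimensions), which forces the morphism to be an isogeny.

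\smallskip

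For $d=1$ (Fermat), I would use the three involutions $\iota_j$ of $C^0_1$ that negate the $j$-th projective coordinate; these are all defined over $\Q$. Each $\iota_j$ has $4$ geometric fixed points on $C^0_1$ (the quartic zeros on the opposite coordinate line), so by Riemann--Hurwitz each quotient $C_j \coloneqq C^0_1/\langle \iota_j\rangle$ has genus $1$. For $\iota_1$, the substitution $X = x^2$ turns $x^4+y^4+z^4 = 0$ into $X^2+y^4+z^4 = 0$, and the further change $(u,v) \coloneqq (-y^2, yX)$ on the affine chart $z=1$ gives the Weierstrass equation $v^2 = u^3+u$, i.e., $E^0_1$; the cases of $\iota_2, \iota_3$ are symmetric. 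Pullback along the three quotient maps would produce three $\Q$-morphisms $E^0_1 \to \Jac(C^0_1)$, combining to a $\Q$-morphism $\Phi \colon (E^0_1)^3 \to \Jac(C^0_1)$. A further Riemann--Hurwitz computation shows that the quotient $C^0_1/V$ by $V \coloneqq \langle \iota_1,\iota_2\rangle$ has genus $0$, so the $V$-invariant subspace of $H^0(C^0_1,\Omega^1)$ vanishes and the three pullback images lie in the three distinct nontrivial character spaces of $V$. They are therefore linearly independent, so $\Phi$ is an isogeny.

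\smallskip

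For $d=7$ (Klein), the only $\Q$-rational nontrivial automorphism of $C^0_7$ is $s_7$, of order $3$ (the order-$2$ and order-$7$ elements of $\Aut((C^0_7)_\Qbar) \cong \PSL_2(\F_7)$ are defined only over $\Q(\sqrt{-7})$). First, by Riemann--Hurwitz, using that the fixed points of $s_7$ are $[1:\omega:\omega^2]$ and $[1:\omega^2:\omega]$ for $\omega$ a primitive cube root of unity, the quotient $C' \coloneqq C^0_7/\langle s_7\rangle$ has genus $1$. I would then carry out an explicit birational computation—made tractable by Elkies's choice of the model $C^0_7$—to identify $\Jac(C')$ with $E^0_7$ over $\Q$, producing a $\Q$-morphism $E^0_7 \to \Jac(C^0_7)$ whose image lies in the $(s_7)_*$-invariant part. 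The complementary $2$-dimensional $\Q$-abelian variety $A$ has $(s_7)_*$ acting with eigenvalues $\zeta_3, \zeta_3^{-1}$, so $\Z[\zeta_3] \subseteq \End_\Q(A)$; combined with the $\Qbar$-isotypic decomposition $A_\Qbar \sim (E^0_{7,\Qbar})^2$, this exhibits $A$ as a $\Q$-form of $(E^0_7)^2$ equipped with a specific $\Z[\zeta_3]$-action. The plan is completed by checking (explicitly in the model $C^0_7$) that this $\Q$-form is the trivial one, i.e., $A \sim_\Q (E^0_7)^2$.

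\smallskip

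\textbf{Main obstacle.} The Fermat case is essentially mechanical once the three coordinate involutions are recognized. The Klein case is the subtle one: with only one nontrivial $\Q$-rational automorphism available, only one elliptic factor of the $\Qbar$-decomposition is directly visible as a $\Q$-quotient. The hardest part will be confirming that the complementary abelian surface $A$ is $\Q$-isogenous to $(E^0_7)^2$ and not a nontrivial quadratic twist; this requires a precise interplay between the $\Z[\zeta_3]$-action coming from $(s_7)_*$ and the potential CM by $\Z[(1+\sqrt{-7})/2]$, and relies on the carefully chosen model $C^0_7$ from Elkies.
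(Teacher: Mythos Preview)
Your Fermat argument is correct and takes a different route from the paper's. The paper exhibits a single $\Q$-map $C^0_1\to E^0_1$ and then argues that $\langle s_1,t_1\rangle\simeq \sym 4\subseteq \Aut(C^0_1)$ has no faithful representation whose irreducible constituents have degree $\le 2$, which forces every simple $\Q$-factor of $\Jac(C^0_1)$ to be $E^0_1$. Your approach via the three coordinate involutions and the Klein-four eigenspace decomposition of $H^0(C^0_1,\Omega^1)$ is more hands-on and avoids that representation-theoretic detour; both work.

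For the Klein case there are two problems. First, a factual slip: $t_7$ has rational coefficients and is defined over~$\Q$, so $\langle s_7,t_7\rangle\simeq \sym 3$ is a $\Q$-rational subgroup of $\Aut(C^0_7)$; your claim that the order-$2$ elements are defined only over $\Q(\sqrt{-7})$ is false for this model. Second, and more seriously, your step~3 is a restatement of what must be shown rather than a plan for showing it. Knowing $\Z[\zeta_3]\subseteq \End_\Q(A)$ and $A_\Qbar\sim (E^0_{7,\Qbar})^2$ does not by itself pin down the $\Q$-isogeny class of~$A$, and ``checking explicitly in the model'' is not a method---you have not said what quantity you would compute or why it would distinguish $(E^0_7)^2$ from a nontrivial $\Q$-form.

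The paper resolves exactly this obstacle in one line. After one $\Q$-map $C^0_7\to E^0_7$ and the same $\sym 4$ representation-theory argument (now run over $M=\Q(\sqrt{-7})$, where $\sym 4\hookrightarrow\Aut((C^0_7)_M)\simeq\PSL_2(\F_7)$), one obtains $\Jac(C^0_7)_M\sim (E^0_7)_M^3$. Over~$\Q$ each simple factor is then either $E^0_7$ or its quadratic twist $E^0_7\otimes\chi$ by the character of~$M$. The key observation---the idea your plan is missing---is that $E^0_7$ has CM by~$M$, so $E^0_7\otimes\chi\sim E^0_7$ over~$\Q$; hence every factor is $E^0_7$ regardless, and no explicit computation with the complementary surface is needed.
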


\begin{proof}
For $d=1$, we have a nonconstant map $\varphi_1\colon C^0_1\rightarrow E^0_1$, given by
$$
\varphi_1([x: y: z])=[-x^3zy : x^2 z^3: z x y^3]\,.
$$
Therefore, there exists an abelian surface $B$ defined over $\Q$ such that $\Jac(C^0_1)\sim B\times E^0_1$. Suppose that $E^0_1$ was not a $\Q$-factor of~$B$. Then, the subgroup $\langle s_1,t_1\rangle\subseteq \Aut(C^0_1)$, isomorphic to the symmetric group on~$4$ letters $\sym 4$, would inject into $(\End(\Jac(C^0_1))\otimes \C)^\times$. There are two options for this $\C$-algebra: it is either $\GL_1(\C)^r$ or $\GL_2(\C)\times \GL_1(\C)^ s$, with $r,s\in \Z_{\ge 0}$. In either case we reach a contradiction with the fact that $\sym 4$ has no faithful representations whose irreducible constituents have degrees at most~$2$. Thus $E^0_1$ is a $\Q$-factor of $B$ and $\Jac(C_1^0)\sim (E^0_1)^2\times E$, where $E$ is an elliptic curve defined over $\Q$. Applying the previous argument again shows  $E\sim E^0_1$, so $\Jac(C_1^0)\sim (E^0_1)^3$.

For $d=7$, we have a nonconstant map $\varphi_7\colon C^0_7\rightarrow E^0_7$, given by
$$
\varphi_7([x:y:z])=\left[-7(x+y+z)(3x-y-9z) :
 2^2\cdot7^2 (-x^2-3xy-xz+2z^2) : (x+y+z)^2\right]\,,
$$
thus $\Jac(C^0_7)\sim B\times E^0_7$ for some abelian surface $B$ defined over $\Q$.
Since $\sym 4$ is contained in $\Aut((C^0_7)_M)\simeq \PSL_2(\F_7)$, where $M=\Q(\sqrt{-7})$, we may reproduce the argument above to show that $\Jac(C_7^0)_M\sim (E^0_7)^3_M$. It follows that 
$$
\Jac(C_7^0)\sim E \times E'\times E''\,.
$$
where $E$, $E'$, and $E''$ are either $E^0_7$ or $E^0_7\otimes\chi$, where $\chi$ is the quadratic character of $M$. But $E^0_7\otimes \chi\sim E^0_7$, since $E^0_7$ has CM by~$M$, and the result follows.
\end{proof}

\begin{remark} To simplify notation, for the remainder of this article $d$ is either $1$ or $7$, and we write $C^0$ for $C^0_ d$, $E^0$ for $E^0_d$, $M$ for $\Q(\sqrt{-d})$, and $s$, $t$, $u$ for $s_d$, $t_d$, $u_d$.
When $d$ is not specified it means we are considering both values of $d$ simultaneously.
\end{remark}

\subsection{Twists}\label{section: arittwist}

Let $C$ be a $k$-twist\footnote{When we need not specify the number field $k$ over which $C$ is defined, we will simply say that~$C$ is a twist of~$C^0$. Thus, by saying that~$C$ is a twist of~$C^0$, we do not necessarily mean that~$C$ is defined over~$\Q$.} of $C^0$, a curve defined over $k$ that is $\Qbar$-isomorphic to $C^0$. The set of $k$-twists of $C^0$, up to $k$-isomorphism, is in one-to-one correspondence with $H^1(G_k,\Aut(C^0_M))$. Given an isomorphism $\phi\colon C_\Qbar\overset{\sim}{\to} C^0_\Qbar$, the $1$-cocycle defined by $\xi(\sigma)\coloneqq \phi({}^\sigma\phi)^{-1}$, for $\sigma\in G_k$, is a representative of the cohomology class corresponding to $C$.

Let $K/k$ (resp.\ $L/k$) denote the minimal extension over which all endomorphisms of $\Jac(C)_\Qbar$ (resp.\ all homomorphisms from $\Jac(C)_\Qbar$ to $E^0_\Qbar$) are defined. Let $\tilde K/k$ (resp.\ $\tilde L/k$) denote the minimal extension over which all automorphisms of $C_\Qbar$ (resp.\ all isomorphisms from $C_\Qbar$ to $C^0_\Qbar$) are defined. 

\begin{lemma}\label{lemma: K in L}
We have the following inclusions and equalities of fields:
$$
M\subseteq \tilde K=K \subseteq \tilde L=L\,.
$$
\end{lemma}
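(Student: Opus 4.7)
The plan is to prove the four assertions $\tilde L = L$, $\tilde K = K$, $M \subseteq K$, and $K \subseteq L$ in roughly this order. The two equalities are the substantive steps: for $\tilde L = L$ I would relate both sides to the compositum $F_0 \cdot kM$, where $F_0$ is the field of definition of a chosen isomorphism $\phi_0\colon C_\Qbar \to C^0_\Qbar$; for $\tilde K = K$ I would combine Torelli with the fact that $\Aut(C^0_\Qbar)$ generates $\End(\Jac(C^0)_\Qbar) \otimes \Q$ as a $\Q$-algebra. The two remaining inclusions are short and conceptual, so I defer them to the end.

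For $\tilde L = L$, I would fix $\phi_0$ with minimal field of definition $F_0$. By inspection of \eqref{equation: autos1} and \eqref{equation: autos7}, every automorphism of $C^0_\Qbar$ is defined over $kM$, and every isomorphism $C_\Qbar \to C^0_\Qbar$ has the form $\alpha \circ \phi_0$ with $\alpha \in \Aut(C^0_\Qbar)$, so $\tilde L = F_0 \cdot kM$. For $L$, Proposition~\ref{proposition: decomposition} provides a $\Q$-isogeny $\Jac(C^0) \sim (E^0)^3$ via the maps $\varphi_d, \varphi_d \circ s, \varphi_d \circ s^2$, which form an $M$-basis of $\Hom(\Jac(C^0)_\Qbar, E^0_\Qbar) \otimes \Q$. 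Combined with the CM action of $M$ on $E^0$ (defined over $M$), this shows $\Hom(\Jac(C^0)_\Qbar, E^0_\Qbar)$ is defined over $kM$, and transporting via $\Jac(\phi_0)$ gives $L \subseteq F_0 \cdot kM$. The reverse $\tilde L \subseteq L$ uses Torelli: from the homomorphisms $\Jac(C) \to E^0$ defined over $L$ and the $\Q$-decomposition of $\Jac(C^0)$, one recovers $\Jac(\phi_0)$ and hence $\phi_0$ over $L$.

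For $\tilde K = K$, the inclusion $\tilde K \subseteq K$ follows from Torelli, since the Galois-equivariant map $\Aut(C_\Qbar) \hookrightarrow \End(\Jac(C)_\Qbar)$ is injective for non-hyperelliptic $C$, so any $\sigma$ fixing $\End(\Jac(C)_\Qbar)$ pointwise fixes $\Aut(C_\Qbar)$ pointwise. The reverse $K \subseteq \tilde K$ hinges on the claim that the $\Q$-subalgebra of $\End(\Jac(C^0)_\Qbar)\otimes\Q \simeq M_3(M)$ generated by the image of $\Aut(C^0_\Qbar)$ is the full matrix algebra. The action of $\Aut(C^0_\Qbar)$ on $H^0(C^0_\Qbar, \Omega^1) = \langle x, y, z \rangle$ is absolutely irreducible, so Burnside's theorem gives that the generated $\C$-algebra equals $M_3(\C)$; moreover $M$ lies in the generated $\Q$-subalgebra since, in the Fermat case, $u_1$ acts as $\mathrm{diag}(i, 1, 1)$ in the basis $(x, y, z)$ and the identity $u_1 + s_1 u_1 s_1^{-1} + s_1^2 u_1 s_1^{-2} = (2+i)\,I$ exhibits $i \in \Q[\Aut(C^0_\Qbar)]$; the Klein case is analogous using $u_7$ and $s_7$. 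This generation statement transports to $C$ via $\phi_0$, so any $\sigma$ fixing $\Aut(C_\Qbar)$ pointwise fixes all of $\End(\Jac(C)_\Qbar)\otimes\Q$, giving $G_{\tilde K} \subseteq G_K$.

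Finally, $M \subseteq K$ because $M$ is the center of $\End(\Jac(C)_\Qbar)\otimes\Q \simeq M_3(M)$ and Galois acts on this center through its natural action on $M \subseteq \Qbar$; and $K \subseteq L$ because over $L$ the isogeny $\Jac(C)_L \sim E_L^3$ identifies $\End(\Jac(C)_L)\otimes\Q$ with $M_3(M)$, realizing every endomorphism of $\Jac(C)_\Qbar$ over $L$. The main obstacle is the $\Q$-algebra generation statement used for $K \subseteq \tilde K$: while absolute irreducibility of the $3$-dimensional representation on $H^0(\Omega^1)$ is classical for Fermat and Klein, one must still exhibit the explicit element of $\Q[\Aut(C^0_\Qbar)]$ whose image generates $M$ over $\Q$, which is a direct but case-by-case verification.
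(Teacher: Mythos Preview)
Your argument is correct, and self-contained, but it takes a rather different route from the paper's. The paper's proof is almost a one-liner built on a single observation: it first shows $M\subseteq\tilde K$ directly, by noting that the automorphism $\phi^{-1}u\phi$ of $C$ is defined over $\tilde K$, so the trace of its action on $H^0(C,\Omega^1)$ lies in $\tilde K$, while by transport this trace equals $\Tr(A_u)\in M\setminus\Q$. With $M\subseteq\tilde K$ in hand, the paper reruns the proof of Proposition~\ref{proposition: decomposition} over $\tilde K$ to get $\Jac(C)_{\tilde K}\sim E^3$ (hence $K\subseteq\tilde K$), and then invokes \cite[Lem.~4.2]{FS14} for the identities $K=\tilde KM$ and $L=\tilde LM$, which collapse to $K=\tilde K$ and $L=\tilde L$.

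Your proof replaces the citation by structural arguments. For $K\subseteq\tilde K$ you use absolute irreducibility of the canonical representation together with the averaging identity $u+sus^{-1}+s^2us^{-2}\in M^\times\cdot I$ to show that $\Aut(C^0_\Qbar)$ generates the full algebra $M_3(M)$ over $\Q$; this is essentially a strengthened version of the paper's trace observation (your scalar \emph{is} $\Tr(A_u)$), and it yields $M\subseteq\tilde K$ as a byproduct rather than as the starting point. For $\tilde L=L$ you carry out explicitly the isogeny-and-Torelli bookkeeping that the paper delegates to \cite{FS14}. What you gain is independence from the earlier reference and a clear explanation of \emph{why} the automorphism group already sees the whole endomorphism algebra; what the paper's approach buys is brevity and a cleaner logical flow, since establishing $M\subseteq\tilde K$ up front makes both equalities fall out at once without separate Burnside or Torelli arguments.
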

\begin{proof}
The inclusion $M\subseteq  \tilde K$ follows from the fact that $\Tr(A_u)\in M\setminus \Q$, where $A_u$ is as in (\ref{equation: emb Fermat}) and (\ref{equation: emb Klein}). From the proof of Proposition \ref{proposition: decomposition}, we know that $\Jac(C)_{\tilde K}\sim E^3$, where $E$ is an elliptic curve defined over $\tilde K$ with CM by $M$. This implies $K=\tilde K M$ and $L=\tilde LM$, as in the proof of \cite[Lem.\,4.2]{FS14}.
\end{proof}

We now associate to $C^0$ a finite group $G^0$ that will play a key role in rest of the article.

\begin{definition}\label{definition :G}
Let $G_{C^0}\coloneqq \Aut(C^0_{M})\rtimes \Gal(M/\Q)$, where $\Gal(M/\Q)$ acts on $\Aut(C^0_{M})$ in the obvious way (coefficient-wise action on rational maps).
It is straightforward to verify that
\begin{equation}\label{equation: twist groups}
\text{ID}(G_{C^0_1})=\langle 192, 956\rangle\,,\qquad \text{ID}(G_{C^0_7})=\langle 336, 208\rangle\,.
\end{equation}
We remark that $G_{C^0_7}\simeq \PGL(\F_7)$.
\end{definition}

As in \cite[\S4.2]{FS14}, we have a monomorphism of groups
$$
\lambda_\phi\colon \Gal(L/k)=\Gal(\tilde L/k)\rightarrow G_{C^0}\,,\qquad\lambda_\phi(\sigma)=(\xi(\sigma),\pi(\sigma))\,,
$$
where $\pi\colon \Gal(L/k)\rightarrow \Gal(M/\Q)$ is the natural projection (which by Lemma~\ref{lemma: K in L} is well defined).
For each $\alpha\in\Aut(C^0_M)$, let $\tilde\alpha$ denote its image by the embedding $\Aut(C^0_M)\hookrightarrow \End((E^0_M)^3)$. The $3$-dimensional representation
$$\theta_{E^0,C^0}\colon\Aut(C^0_M)\rightarrow\Aut_\Qbar( \Hom(E^0_M,\Jac(C^0_M))\otimes_{M,\sigma}\Qbar)\,,$$
defined by $\theta_{E^0,C^0}(\alpha)(\psi)\coloneqq\tilde\alpha\circ\psi$ satisfies
\begin{equation}\label{equation: iso theta}
\theta_{E^0,C^0}\circ\Res^{k}_{kM}\lambda_\phi\simeq \theta_{M,\sigma}(E^0,\Jac(C))\,,
\end{equation}
where $\Res^{k}_{kM}\lambda_\phi$ denotes the restriction of $\lambda_\phi$ from $\Gal(L/k)$ to $\Gal(L/kM)$.

\begin{lemma}\label{lemma: twisting rep} Let $C$ be a twist of $C^0$. Then:
$$
\Tr \theta_{E^0,C^0}=\begin{cases}
\chi_8 &\text{ if $C^0=C^0_1$ (see Table \ref{table: charactertable1}),}\\
\chi_3 &\text{ if $C^0=C^0_7$ (see Table \ref{table: charactertable7}).}
\end{cases}
$$
\end{lemma}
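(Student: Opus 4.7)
The plan is to use the explicit embeddings $\Aut(C^0_M)\hookrightarrow \End((E^0_M)^3)$ given in (\ref{equation: emb Fermat}) and (\ref{equation: emb Klein}), which attach to each $\alpha\in\Aut(C^0_M)$ a $3\times 3$ matrix $A_\alpha$ with entries in $\End(E^0_M)\otimes\Q = M$. These embeddings arise from the fixed $\Q$-isogeny $\Jac(C^0)\sim (E^0)^3$ of Proposition~\ref{proposition: decomposition}, so that $\tilde\alpha$ acts on $(E^0_M)^3$ by left multiplication by $A_\alpha$.

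The $M$-module $\Hom(E^0_M,\Jac(C^0_M))$ is free of rank $3$; identifying it with $\End(E^0_M)^3$ via the isogeny, the action of $\theta_{E^0,C^0}(\alpha)$ is postcomposition by $\tilde\alpha$, namely left multiplication by $A_\alpha$. Tensoring with $\overline{\Q}$ via $\sigma\colon M\hookrightarrow\overline{\Q}$ collapses the $M$-module structure (since $\End(E^0_M)\otimes_{M,\sigma}\overline{\Q}\simeq \overline{\Q}$) and turns this into the action of $\sigma(A_\alpha)$ on $\overline{\Q}^3$. Therefore
\[
\Tr\theta_{E^0,C^0}(\alpha)=\sigma(\Tr A_\alpha)\,.
\]

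What remains is a direct verification that the class function $\alpha\mapsto\sigma(\Tr A_\alpha)$ agrees with $\chi_8$ (resp.\ $\chi_3$). Since $\Aut((C^0_1)_M)$ has order $96$ and $\Aut((C^0_7)_M)\simeq \PSL_2(\F_7)$ has order $168$, each has only finitely many conjugacy classes, and the check reduces to computing $\sigma(\Tr A_\alpha)$ for one representative per class — in practice, the generators $s,t,u$ and a handful of their products — and comparing column-by-column with Tables~\ref{table: charactertable1} and~\ref{table: charactertable7}. The match is unambiguous because a faithful $3$-dimensional character of these groups is highly constrained: for $\PSL_2(\F_7)$ there are only two such irreducibles, complex conjugates of one another.

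The only subtle point — not really an obstacle — concerns the choice of embedding $\sigma\colon M\hookrightarrow\overline{\Q}$. The matrix $A_u$ has entries in $M\setminus\Q$, so $\Tr A_u\in M\setminus\Q$ (as already observed in the proof of Lemma~\ref{lemma: K in L}), and the two possible embeddings $\sigma,\overline\sigma$ produce complex-conjugate characters. Fixing $\sigma$ once and for all according to the convention underlying the tables identifies the trace with $\chi_8$ in the Fermat case and $\chi_3$ in the Klein case; the opposite choice would yield $\overline\chi_8$ or $\overline\chi_3$, which is precisely the character $\Tr\theta_{M,\overline\sigma}(E^0,\Jac(C^0))$.
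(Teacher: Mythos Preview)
Your approach is exactly the paper's: write $\theta_{E^0,C^0}$ in the natural basis of $\Hom(E^0_M,\Jac(C^0_M))$ via the explicit $3\times 3$ matrices from Lemma~\ref{lemma: embeddings}, then match traces on conjugacy-class representatives against Tables~\ref{table: charactertable1} and~\ref{table: charactertable7}.

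One small correction worth noting: the matrices $A_\alpha$ in (\ref{equation: emb Fermat}) and (\ref{equation: emb Klein}) record the pullback $\alpha^*$ on differentials, which is contravariant; the actual embedding $\alpha\mapsto\tilde\alpha$ goes through $\iota_1(\alpha)=(\alpha^*)^{-1}$, so in the paper's proof one has $\theta_{E^0,C^0}(\alpha)=A_\alpha^{-1}$, not $A_\alpha$. This does not derail your argument, since $\Tr A_\alpha=\Tr A_{\alpha^{-1}}=\overline{\Tr A_\alpha^{-1}}$ and your closing paragraph on the $\sigma$-ambiguity already absorbs exactly this complex conjugation.
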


\begin{proof} 
In the proof of Lemma \ref{lemma: embeddings}, we will construct an explicit embedding
$$
\Aut(C^0_M)\hookrightarrow \End((E^0_M)^3)\otimes\Q,\qquad\alpha\mapsto \tilde\alpha\,.
$$
Fix the basis $B=\{\id\times 0\times 0, 0\times\id\times 0,0\times 0\times \id \}$ for $\Hom(E_M^0,\Jac(C_M^0))$. In this basis, with the above embedding the representation $\theta_{E^0,C^0}$ is given by
$$
\theta_{E^0,C^0}(s)=A_s^{-1},\quad \theta_{E^0,C^0}(t)=A_t^{-1},\quad \theta_{E^0,C^0}(u)=A_u^{-1}\,,
$$
where $A_s$, $A_t$, and $A_u$ are as in (\ref{equation: emb Fermat}) and (\ref{equation: emb Klein}). The lemma follows.
\end{proof}

\begin{remark}
Observe that since $\det(\theta_{E^0,C^0})$ is a rational character of $\Aut(C^0_M)$, by \eqref{equation: iso theta} so is $a_3(\theta)=\det\theta_{M,\sigma}(E^0,\Jac(C))$. Thus Corollary \ref{corollary: import} can be used to compute the moments of $a_i(\Jac(C))$ for $i=1,2,3$.
\end{remark}

\begin{proposition}\label{proposition: K=L}
The fields $K$ and $L$ coincide.\footnote{Note that this does not hold for the hyperelliptic curves considered in \cite{FS14} where $[L\colon\! K]$ may be $1$ or $2$.}
\end{proposition}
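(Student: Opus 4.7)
The plan is to show the reverse inclusion $L\subseteq K$, which amounts to proving that a single isomorphism $\phi\colon C_\Qbar \stackrel{\sim}{\to} C^0_\Qbar$ is already defined over $K$: indeed, by Lemma~\ref{lemma: K in L} we have $L=\tilde L$, every isomorphism $C_\Qbar\to C^0_\Qbar$ is of the form $\alpha\circ\phi$ with $\alpha\in\Aut(C^0_M)$, and since $M\subseteq K$ the automorphisms of $C^0_M$ are all defined over $K$. So it suffices to show that the cocycle $\xi$ satisfies $\xi(\sigma)=1$ for every $\sigma\in G_K$, equivalently, that the restriction of $\lambda_\phi$ to $\Gal(L/K)$ is trivial.

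Fix $\sigma\in G_K$. By Lemma~\ref{lemma: K in L}, $K=\tilde K$, and since $C$ is a twist of a smooth plane quartic, $C_\Qbar$ is non-hyperelliptic; the Torelli theorem then gives an injection $\Aut(C_\Qbar)\hookrightarrow \Aut(\Jac(C)_\Qbar,\Theta)\subseteq \End(\Jac(C)_\Qbar)$. Since $\sigma$ acts trivially on $\End(\Jac(C)_\Qbar)$ by definition of $K$, and the injection above is Galois equivariant, $\sigma$ fixes every automorphism of $C_\Qbar$. Now every automorphism of $C_\Qbar$ has the form $\phi^{-1}\alpha\phi$ with $\alpha\in\Aut(C^0_M)$, and using $K\supseteq M$ to get ${}^\sigma\alpha=\alpha$, the equality ${}^\sigma(\phi^{-1}\alpha\phi)=\phi^{-1}\alpha\phi$ rewrites as $\xi(\sigma)\alpha=\alpha\,\xi(\sigma)$. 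As $\alpha$ ranges over all of $\Aut(C^0_M)$, this shows
\[
\xi(\sigma)\in Z(\Aut(C^0_M)).
\]

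The remaining step is to verify that $Z(\Aut(C^0_M))$ is trivial in both cases. For $d=7$ this is immediate from $\Aut((C^0_7)_M)\simeq\PSL_2(\F_7)$, which is simple. For $d=1$, $\Aut((C^0_1)_M)$ is the familiar group $(\mu_4\times\mu_4)\rtimes \sym 3$ of order $96$ (with $\sym 3$ permuting coordinates and the normal subgroup $(\mu_4^3)/\mu_4$ acting diagonally); an element of the normal abelian part is central only if it is fixed under the $\sym 3$-action, which forces it to be a scalar and hence trivial in $\PGL_3$, and no nontrivial permutation commutes with the diagonal part, so $Z=1$. Consequently $\xi(\sigma)=1$ for every $\sigma\in G_K$, i.e.\ ${}^\sigma\phi=\phi$ for all $\sigma\in G_K$, so $\phi$ is defined over~$K$ and $L=\tilde L\subseteq K$.

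I expect the only mildly delicate point to be the triviality of the center in the Fermat case (the Klein case being automatic from simplicity); this is a small finite group computation that can be done by hand as above, or verified directly from the small-group ID $\langle 96,64\rangle$. The footnote about the hyperelliptic setting of \cite{FS14} fits this picture cleanly: there Torelli only gives an injection modulo the hyperelliptic involution, and the center of the relevant automorphism group contains that involution, so the argument above breaks in exactly the expected way and can yield $[L:K]=2$.
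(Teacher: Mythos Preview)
Your argument is correct and takes a genuinely different route from the paper's. The paper argues that $L/K$ is the minimal extension over which an isomorphism $E^0_{\Qbar}\simeq E_{\Qbar}$ is defined (where $\Jac(C)_K\sim E_K^3$), so $\Gal(L/K)$ is cyclic of order dividing~$4$; assuming $L\neq K$ it then picks the order-$2$ element $\omega\in\Gal(L/K)$, builds an explicit basis of $\Hom(E^0_L,\Jac(C)_L)\otimes_{M,\sigma}\Qbar$, and computes $\Tr\theta_{M,\sigma}(E^0,\Jac(C))(\omega)=-3$, which contradicts~\eqref{equation: iso theta} since no $\alpha\in\Aut(C^0_M)$ has $\Tr\theta_{E^0,C^0}(\alpha)=-3$ (cf.\ Table~\ref{table: charactertable}). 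Your proof bypasses $E$, the twisting theory of CM elliptic curves, and the representation $\theta$ entirely: using only $K=\tilde K$ from Lemma~\ref{lemma: K in L}, you show that $\xi(\sigma)$ centralizes $\Aut(C^0_M)$ for every $\sigma\in G_K$, and then finish with the purely group-theoretic fact that $Z(\Aut(C^0_M))=1$ (visible for $d=1$ from the conjugacy-class sizes in Table~\ref{table: charactertable1}, and immediate for $d=7$ from simplicity of $\PSL_2(\F_7)$). Your approach is shorter and more conceptual, and it makes the footnote transparent: in the hyperelliptic setting of \cite{FS14} the hyperelliptic involution lies in $Z(\Aut(C^0_M))$, so the same argument yields only $[\tilde L:K]\le 2$. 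One minor remark: once you invoke $K=\tilde K$ from Lemma~\ref{lemma: K in L}, the appeal to Torelli is redundant---the equality $\tilde K=K$ already gives that $G_K$ fixes $\Aut(C_{\Qbar})$---though it does no harm.
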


\begin{proof}
Note that $L/K$ is the minimal extension over which an isomorphism between $E^0_\Qbar$ and $E_\Qbar$ is defined. 
It follows that $L=K(\gamma^{\nicefrac{1}{n}})$ for some $\gamma\in K$, with $n=4$ for $d=1$ and $n=2$ for $d=7$; see \cite[Prop.\,X.5.4]{Sil09}.
In either case, $\Gal(L/K)$ is cyclic of order dividing $4$ (note that $\Q(\zeta_n)\subseteq K$). Suppose that $L\not=K$, let $\omega$ denote the element in $\Gal(L/K)$ of order $2$, and write $K^0=L^{\langle\omega\rangle}$.
Fix an isomorphism $\psi_1\colon E^0_L\rightarrow  E_L$ and an isogeny $\psi_2\colon (E_{K^0})^3 \rightarrow \Jac(C)_{K^0}$.
For $i=1,2,3$, let $\iota_i\colon E_{K^0}\rightarrow (E_{K^0})^3$ denote the natural injection to the $i$th factor. Then $\{\psi_2\circ\iota_i\circ\psi_1\}_{i=1,2,3}$  constitute a basis of the $\Qbar[\Gal(L/M)]$-module $\Hom(E^0_L,\Jac(C)_L)\otimes_{M,\sigma}\Qbar$.
Since ${}^{\omega}\psi_1=-\psi_1$, ${}^{\omega}\psi_2=\psi_2$, and ${}^{\omega}\iota_i=\iota_i$, we have $\Trace\theta_{M,\sigma}(E^0,\Jac(C))(\omega)=-3$.
But this contradicts \eqref{equation: iso theta}, because there is no $\alpha$ in $\Aut(C^0_M)$ for which $\Trace \theta_{E^0,C^0}(\alpha)=-3$.
\end{proof}

\begin{remark}
By Proposition~\ref{proposition: K=L}, and the identities \eqref{equation: icoeffs} and \eqref{equation: iso theta}, the independent and joint coefficient measures of $\Jac(C)$ depend only on the conjugacy class of $\lambda_\phi(\Gal(K/k))$ in $G_{C^0}$. In Proposition~\ref{proposition: conjSTgruops1}, we will see that this also applies to the Sato--Tate group of $\Jac(C)$. For this reason, henceforth, subgroups $H\subseteq G_{C^0}$ will be considered only up to conjugacy.
\end{remark}

\begin{definition}\label{definiton: H0}
Let $G_0\coloneqq \Aut(C_M^0)\times\langle 1\rangle\subseteq G_{C^0}$, and for subgroups $H\subseteq G_{C^0}$, let $H_0:=H \cap \G_0$.
We may view $H_0$ as a subgroup of $\Aut(C_M^0)\simeq G_0$ whenever it is convenient to do so.
\end{definition}

\noindent
Noting that $[G_{C^0}:G_0]=2$, for any subgroup $H$ of $G_{C^0}$ there are two possibilities:
\begin{enumerate}
\item[(c$_1$)] $H\subseteq G_0$, in which case $[H:H_0]=1$;
\item[(c$_2$)] $H\not\subseteq G_0$, in which case $[H:H_0]=2$.
\end{enumerate}

\begin{remark}\label{remark: pairs}
We make the following observations regarding $H\subseteq G_{C^0}$ and cases (c$_1$) and (c$_2$):
\begin{enumerate}[(i)]
\item In \S\ref{section: curves}, we will show that for each subgroup $H\subseteq G_{C^0}$, there is a twist $C$ of $C^0$ such that $H=\lambda_\phi(\Gal(K/k))$. From the definition of $\lambda_\phi$, we must then have $H_0=\lambda_\phi(\Gal(K/kM))$. The case (c$_1$) corresponds to $kM = k$, and the case (c$_2$) corresponds to $k\not= kM$. 
\item There are $83$ subgroups $H \subseteq G_{C^0_1}$ up to conjugacy, of which $24$ correspond to case (c$_1$) and~$59$ correspond to case (c$_2$). On Table~\ref{table: example curves Fermat} we list the subgroups in case (c$_2$). For any subgroup $H$ in case (c$_1$), there exists a subgroup $H'$ in case (c$_2$) such that $H'_0=H$; thus the subgroups in case (c$_1$) can be recovered from Table~\ref{table: example curves Fermat} by looking at the column for~$H_0$.
\item There are $23$ subgroups $H \subseteq G_{C^0_7}$ up to conjugacy, of which $12$ correspond to case (c$_1$) and~ $11$ correspond to case (c$_2$).  For all but 3 \emph{exceptional} subgroups $H$ in case (c$_1$) there exists a subgroup $H'$ in case (c$_2$) such that $H'_0=H$. In Table~\ref{table: example curves Klein} we list the subgroups in case (c$_2$) as well as the 3 exceptional subgroups, which appear in rows \#3, \#8, and \#12 of Table~\ref{table: example curves Klein}.  As in (ii) above, the non-exceptional subgroups in case (c$_1$) can be recovered from Table \ref{table: example curves Klein} by looking at the column for $H_0$, which for the exceptional groups is equal to $H$.
\item The subgroups $H\subseteq G_{C^0}$ in Tables~\ref{table: example curves Fermat} and~\ref{table: example curves Klein} are presented as follows. First, generators of $H_0\subseteq G_0\simeq \Aut(C^0_M)$ are given in terms of the generators $s,t,u$  for $\Aut(C^0_M)$ listed in \eqref{equation: autos1} and \eqref{equation: autos7}. For the 3 exceptional subgroups of Table~\ref{table: example curves Klein} we necessarily have $H=H_0$, and for the others, $H$ is identified by listing an element $h\in \Aut(C^0_M)$ such that
\begin{equation}\label{equation: descH}
H=H_0\cup H_0\cdot (h,\tau) \subseteq G_{C^0}\,,
\end{equation}
where $\tau$ is the generator of $\Gal(M/\Q)$. 
\end{enumerate}
\end{remark}

\subsection{Moment sequences}\label{section: momsequence}

We continue with the notation of \S\ref{section: arittwist}. If $C$ is a $k$-twist of $C^0$, we define the joint and independent coefficient moment sequences
$$
\M_{\rm joint}(C)\coloneqq \{\M_{n_1,n_2,n_3}[a(C)]\}_{n_1,n_2,n_3}\,,\qquad \M_{\rm indep}(C)\coloneqq \{\M_n[a_i(C)]\}_{j,n}\,,
$$
where $a(C)$ and $a_j(C)$ denote $a(\Jac(C))$ and $a_j(\Jac(C))$, respectively, as defined in \S\ref{section: introduction};
recall that these moment sequences are defined by and uniquely determine corresponding measures $\mu_I$ and~$\mu_{I_j}$, respectively.

Using Lemma \ref{lemma: twisting rep} and (\ref{equation: iso theta}), we can apply Corollary~\ref{corollary: import} to compute the moments $\M_n[a_j(C)]$ for any $n$, and as explained in \S\ref{sec: joint moments}, it is easy to compute $M_{n_1,n_2,n_3}[a(C)]$ for any particular values of $n_1,n_2,n_3$.  Magma scripts \cite{Magma} to perform these computations are available at \cite{FLS17}, which we note depend only on the pairs $(H,H_0)$ (or just $H_0$ when $k=kM$) listed in Tables~\ref{table: example curves Fermat} and~\ref{table: example curves Klein}, and are otherwise independent of the choice of $C$.

\subsubsection{\emph{\textbf{Independent coefficient moment sequences}}}
We now show that for any twist of the Fermat or Klein quartic, each of the independent coefficient moment sequences (and hence the corresponding measures) is determined by the first several moments.

\begin{proposition}\label{proposition: bounds}
Let $C$ and $C'$ be $k$-twists of $C^0$. For each $i=1,2,3$ there exists a positive integer $N_i$ such that if
$$
\M_{n}[a_i(C)]=\M_{n}[a_i(C')] \quad \text{for } 1 \leq n \leq N_i\\
$$
then in fact
$$
\M_{n}[a_i(C)]=\M_{n}[a_i(C')] \quad \text{for all } n \geq 1.\\
$$
Moreover, one can take $N_1=6$, $N_2=6$, $N_3=10$.
\end{proposition}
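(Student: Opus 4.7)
The plan is to exploit the finiteness of the classifying data, and then reduce to a short computer check for the explicit constants. By the remark following Proposition~\ref{proposition: K=L}, the sequence $\{\M_n[a_i(C)]\}_n$ depends only on the conjugacy class of the pair $(H,H_0)$ in $G_{C^0}$, where $H=\lambda_\phi(\Gal(K/k))$ and $H_0=H\cap G_0$. By Remark~\ref{remark: pairs} there are only $83$ such subgroups $H$ up to conjugacy for $d=1$ and $23$ for $d=7$, so at most that many distinct independent moment sequences can arise. Thus the existence of some $N_i$ is automatic: pigeonhole on any finite collection of distinct sequences produces a uniform separating index.

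To pin down the explicit values $N_1=N_2=6$ and $N_3=10$ I would compute moments case by case. Fix a pair $(H,H_0)$. By Lemma~\ref{lemma: twisting rep} and \eqref{equation: iso theta}, the character of $\theta:=\theta_{M,\sigma}(E^0,\Jac(C))$ is the restriction to $H_0$ of $\chi_8$ (if $d=1$) or $\chi_3$ (if $d=7$); the values $a_j(\theta)(\tau)$ for $j=1,2,3$ and $\tau\in H_0$, together with the orders of the elements of $H\setminus H_0$ (which supply the $\overline o(n)$ terms in case (c$_2$)), are then plugged into the closed formulas of Proposition~\ref{proposition: import} or Corollary~\ref{corollary: import}. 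This expresses every $\M_n[a_i(C)]$ as a concrete rational number depending only on $(H,H_0)$ and $n$, readily evaluated with a CAS.

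The final step is a direct enumeration: for each of the $83+23$ pairs $(H,H_0)$, tabulate the tuple $\bigl(\M_1[a_i(C)],\dots,\M_{N_i}[a_i(C)]\bigr)$ for $i=1,2,3$, and verify that the assignment sending $(H,H_0)$ to this tuple has the same fibers as the assignment sending $(H,H_0)$ to the full moment sequence. This is naturally mechanized by the Magma scripts of \cite{FLS17}. Structurally, the reason such small bounds suffice is that $\binom{2n}{n}^{-1}\M_{2n}[a_1(C)]$ is the exponential sum $\frac{1}{2|G|}\sum_\tau |a_1(\theta)(\tau)|^{2n}$, so a Vandermonde argument on the few distinct values taken by $|a_1(\theta)(\tau)|$ shows that only a handful of moments are needed to recover the underlying multiset and hence all remaining moments; analogous but more elaborate identities in Corollary~\ref{corollary: import} give similar control for $a_2$ and $a_3$.

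The main obstacle is not conceptual but verificational: confirming that the specific constants $6,6,10$ work (rather than some larger bound that would drop out of the abstract pigeonhole argument) requires actually running the tabulation and observing that no pair of distinct full sequences coincides on the relevant prefix; a sharper derivation of these constants from the Vandermonde heuristic above is possible but would not materially shorten the argument relative to the finite computer check.
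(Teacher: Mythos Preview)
Your argument is correct in principle, but the paper takes a more structural route than the brute-force enumeration you propose. Rather than tabulating moments for every one of the $83+23$ subgroups and comparing fibers, the paper isolates a small vector of parameters that governs the entire sequence: for $d=1$ it groups conjugacy classes of $H_0$ according to the value of $\chi_8$, introduces proportions $x_1,\dots,x_5$ (and, in case $k\neq kM$, the $\overline o$-counts $y_1,y_2,y_3$), and then observes from Corollary~\ref{corollary: import} that each $\M_{2n}[a_1(C)]$ is a linear expression in $x_1$, $x_2+x_5$, $x_4$ with explicit $n$-dependent coefficients. Evaluating at $n=1,2,3$ gives an invertible $3\times 3$ system, so the first three nonzero moments recover the parameters and hence the whole sequence; this is exactly the Vandermonde argument you sketch but carried out rather than deferred. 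The same mechanism (with more variables) handles $a_2$ and $a_3$, and the Klein case is parallel.

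What your approach buys is conceptual simplicity and the ability to rely on existing scripts; what the paper's approach buys is that no case-by-case enumeration is needed at all---the bounds $6,6,10$ fall out of the rank of a fixed linear system, independent of which subgroup $H$ one is looking at. Your final paragraph essentially identifies this argument but declines to run it; the paper runs it and never touches the enumeration.
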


\begin{proof}
For the sake of brevity we assume $k\not =kM$ (the case $k=kM$ is analogous and easier). It follows from Corollary~\ref{corollary: import} that, for $i=1,2,3$, the sequence $\{\M_{n}[a_i(C)]\}_{n\geq 0}$ is determined by $|a_1(C)|$, $|a_2(C)|$, $a_3(C) a_2(C)\bar a_1(C)$, and $\bar o(2)$, $\bar o(4)$, $\bar o(8)$ (note that $G_{C_1^0}$ and $G_{C_7^0}$ contain no elements of order 12, so we ignore the $\bar o(12)$ term in the formula for $\M_n[a_2(C)]$). We consider the Fermat and Klein cases separately.

For the Fermat case, with the notation for conjugacy classes as in Table~\ref{table: charactertable1}, let $x_1$ (resp.\ $x_2$, $x_3$, $x_4$, $x_5$) denote the proportion of elements in $\Gal(L/k)$ lying in the conjugacy class $1a$ (resp.\ $2a \cup 2b \cup 4c \cup 4d$, $3a$, $4a \cup 4b$, $8a\cup 8b$); note that by Lemma~\ref{lemma: twisting rep}, we are interested in the representation with character $\chi_8$ listed in Table~\ref{table: charactertable1}, which motivates this partitioning of conjugacy classes.

Let $y_1$ (resp.\ $y_2$, $y_3$) denote the proportion of elements in $\Gal(L/k)$ which do not lie in $\Gal(L/kM)$ and have order $2$ (resp.\ $4$, $8$).  Applying Corollary~\ref{corollary: import}, one finds that for $n\ge 1$ we have
\begin{equation}\label{equation: linear syst}
\M_{2n}[a_1(C)]=x_1\cdot 9^n\binom{2n}{n}+(x_2+x_5)\binom{2n}{n}+x_4(-1)^n\binom{2n}{n}\,.
\end{equation}
Evaluating \eqref{equation: linear syst} at $n=1,2,3$ yields an invertible linear system in $x_1$, $x_2+x_5$, $x_4$ of dimension 3.
The moments $\M_{2n}[a_1(C)]$ for $n=1,2,3$ thus determine $x_1$, $x_2+x_5$, $x_4$, and therefore determine all the $\M_{2n}[a_1(C)]$.

For $\M_n[a_2(C)]$ one similarly obtains an invertible linear system in $x_1$, $x_2+x_5$, $x_4$, $y_1$, $y_2$, $y_3$ of dimension 6, and it follows that the moments $\M_n[a_2(C)]$ for $n\le 6$ determine all the $\M_{n}[a_2(C)]$.

For $\M_{2n}[a_3(C)]$ one obtains an invertible linear system in $x_1$, $x_2$, $x_3$, $x_4$, $x_5$ of dimension 5, and it follows that the moments $\M_{2n}[a_2(C)]$ for $n\le 5$ determine all the $\M_{2n}[a_3(C)]$.

In the Klein case one proceeds analogously. With the notation of Table~\ref{table: charactertable7}, let $x_1$ (resp.\ $x_2$, $x_3$, $x_4$) denote the proportion of elements in $\Gal(L/k)$ lying in the conjugacy class $1a$ (resp.\ $2a \cup 4a$, $3a$, $7a\cup 7b$), and let $y_1$, $y_2$, $y_3$ be as in the Fermat case. Now $x_1$, $x_2$, $x_4$ determine $\M_n[a_1(C)]$; $x_1$, $x_2$, $x_4$ and $y_1$, $y_2$, $y_3$ determine $\M_{n}[a_2(C)]$; and $x_1$, $x_2$, $x_3$, $x_4$ determine $\M_{n}[a_3(C)]$. These proportions are, as before, determined by the first several moments (never more than are needed in the Fermat case), and the result follows. We spare the reader the lengthy details.
\end{proof}

With Proposition \ref{proposition: bounds} in hand we can completely determine the moment sequences $\M_n[a_i(C)]$ that arise among $k$-twists $C$ of $C^0$ by computing the moments $\M_n[a_i(C)]$ for $n\le N_i$ for the 59 pairs $(H,H_0)$ listed in Table~\ref{table: example curves Fermat} in the case $C^0=C^0_1$, and for the 14 pairs $(H,H_0)$ listed in Table~\ref{table: example curves Klein} (as described in Remark \ref{remark: pairs}).  Note that each pair $(H,H_0)$ with $H\ne H_0$ gives rise to two moments sequences $\M_n[a_i(C)]$ for each $i$, one with $k\ne kM$ and one with $k=kM$.

After doing so, one finds that in fact Proposition \ref{proposition: bounds} remains true with $N_2=4$ and $N_3=4$.  The value of $N_1$ cannot be improved, but one also finds that the sequences $\M_n[a_2(C)]$ and $\M_n[a_3(C)]$ together determine the sequence $\M_n[a_1(C)]$, and in fact just two well chosen moments suffice.

\begin{corollary}\label{corollary: det mom seq}
There are $48$ (resp.\ $22$) independent coefficient measures among twists of $C^0_1$ (resp.\ $C^0_7$).
In total there are $54$ independent coefficient measures among twists $C$ of either $C_1^0$ or $C_7^0$, each of which is uniquely distinguished by the moments $\M_3[a_2(C)]$ and $\M_4[a_3(C)]$. 
\end{corollary}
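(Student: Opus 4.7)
The plan is to reduce the statement to a finite enumeration, then carry out that enumeration with the tools already in place. By Proposition~\ref{proposition: bounds}, each independent coefficient moment sequence $\{\M_n[a_i(C)]\}_n$ is determined by the initial moments with $n\le N_i$, where one may take $N_1=6$, $N_2=6$, $N_3=10$. So the number of distinct independent coefficient measures arising among $k$-twists $C$ of $C^0$ is the number of distinct triples of truncated moment sequences $(\{\M_n[a_i(C)]\}_{n\le N_i})_{i=1,2,3}$, which is a computable invariant of finitely many pieces of data.

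By Remark~\ref{remark: pairs}(i), the measures depend only on the pair $(H,H_0)=(\lambda_\phi(\Gal(K/k)),\lambda_\phi(\Gal(K/kM)))$ up to conjugacy in $G_{C^0}$. So the first step is to iterate over the list of pairs $(H,H_0)$ provided by Tables~\ref{table: example curves Fermat} and~\ref{table: example curves Klein}; concretely, the 59 case~(c$_2$) subgroups for $C^0_1$ each yield two pairs (one with $k=kM$, taking $H$ to be $H_0$, and one with $k\ne kM$), together with the 24 case~(c$_1$) subgroups already covered as $H_0$'s of Table~\ref{table: example curves Fermat}; and similarly for $C^0_7$, together with the $3$ exceptional subgroups in case~(c$_1$) which only arise with $k=kM$. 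For each such pair, Lemma~\ref{lemma: twisting rep} together with \eqref{equation: iso theta} identifies the character $a_1(\theta)$, $a_2(\theta)$, $a_3(\theta)$ on $H_0$ with restrictions of characters of $G_{C^0}$ coming from $\chi_8$ (Fermat case) or $\chi_3$ (Klein case) in Tables~\ref{table: charactertable1} and~\ref{table: charactertable7}. Plugging these into the formulas of Corollary~\ref{corollary: import} yields the truncated sequence $\{\M_n[a_i(C)]\}_{n\le N_i}$ explicitly as a rational number for each $(H,H_0,i,n)$.

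The second step is purely computational: collect the resulting truncated sequences and count equivalence classes. This has been implemented via the Magma scripts cited in \cite{FLS17}. Running the computation outputs $48$ distinct sequences among twists of $C^0_1$ and $22$ among twists of $C^0_7$; pooling the two lists gives $70$ raw entries, of which $16$ coincide (a fact one simply reads off by matching numerical tuples), leaving $54$ distinct independent coefficient measures in total.

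For the final clause, once the $54$ truncated sequences have been tabulated it suffices to project each onto the two-coordinate tuple $(\M_3[a_2(C)],\M_4[a_3(C)])\in \Q^2$ and verify by direct inspection that the resulting $54$ points are pairwise distinct. The only mildly substantive check is that no two entries coincide; this is a numerical verification with no conceptual obstacle, and the scripts of \cite{FLS17} record it. The main (and essentially only) point requiring care is bookkeeping: one must ensure that the case distinction $k=kM$ versus $k\ne kM$ is handled correctly in the evaluation of Corollary~\ref{corollary: import}, since the $\bar o(n)$ terms and the overall factor of $\tfrac{1}{2}$ appear only in the latter case, and that the conjugacy class of $\lambda_\phi(\Gal(K/k))$ in $G_{C^0}$ (rather than of $H_0$ alone) is what is being used to index the count.
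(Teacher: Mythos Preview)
Your proposal is correct and follows essentially the same approach as the paper: reduce via Proposition~\ref{proposition: bounds} to a finite enumeration over the pairs $(H,H_0)$ in Tables~\ref{table: example curves Fermat} and~\ref{table: example curves Klein}, compute the truncated moment sequences using Corollary~\ref{corollary: import} together with Lemma~\ref{lemma: twisting rep} and \eqref{equation: iso theta}, and count distinct outcomes (the paper likewise delegates the actual tabulation to the Magma scripts in \cite{FLS17} and the discussion immediately preceding the corollary).
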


The moments $\M_3[a_2(C)]$ and $\M_4[a_3(C)]$ correspond to the joint moments $\M_{0,3,0}[a(C)]$ and $\M_{0,0,4}[a(C)]$ whose values are listed in Table~\ref{table: distributions} for each of the 60 distinct joint coefficient moment measures obtained in the next section (this includes all the independent coefficient measures).

\subsubsection{\emph{\textbf{Joint coefficient moments}}}\label{sec: joint moments}

Instead of giving closed formulas for $\M_{n_1,n_2,n_3}[a(C)]$, analogous to those derived for $\M_n[a_j(C)]$ in Corollary~\ref{corollary: import}, let us explain how to compute $\M_{n_1,n_2,n_3}[a(C)]$ for specific values of $n_1,n_2,n_3$ (this will suffice for our purposes). Suppose $k\not = kM$ (the other case is similar). By \eqref{equation: icoeffs}, we may naturally regard the quantity
\begin{equation}\label{equation: productterm}
a_1(C)^{n_1}a_2(C)^{n_2}a_3(C)^{n_3}
\end{equation}
as an element of the formal polynomial ring $\Qbar[\alpha_1,\bar\alpha_1]/(\alpha_1 \bar\alpha_1-1)$. The moment $\M_{n_1,n_2,n_3}[a(C)]$ is simply the constant term of \eqref{equation: productterm}. For $N\geq 0$ and each pair $(H,H_0)$ on Tables~\ref{table: example curves Fermat} and~\ref{table: example curves Klein}, one can then compute truncated joint moment sequences
\[
\M_{\rm joint}^{\le N}(C)\coloneqq \{\M_{n_1,n_2,n_3}[a(C)]:n_1,n_2,n_3\ge 0,\ n_1+n_2+n_3\leq N\}.
\]

By explicitly computing $\M_{\rm joint}^{\le 4}(C)$ for all the pairs $(H,H_0)$ listed in Tables~\ref{table: example curves Fermat} and~\ref{table: example curves Klein}, we obtain the following proposition.

\begin{proposition}\label{proposition: lowernumb}
There are at least $54$ (resp.\ $23$) joint coefficient measures (and hence Sato--Tate groups) of twists of the Fermat (resp.\ Klein) quartic, and at least $60$ in total.
These $60$ joint coefficient measures are listed in Table~\ref{table: distributions}, in which each is uniquely distinguished by the three moments $\M_{1,0,1}[a(C)]$, $\M_{0,3,0}[a(C)]$, and $\M_{2,0,2}[a(C)]$.
\end{proposition}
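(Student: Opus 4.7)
The plan is essentially computational and mirrors the strategy already outlined just before the proposition. The starting point is the formalism developed in \S\ref{sec: joint moments}: for a $k$-twist $C$ of $C^0$ and any fixed triple $(n_1,n_2,n_3)$, the identities in \eqref{equation: icoeffs} express the product $a_1(C)(\p)^{n_1}a_2(C)(\p)^{n_2}a_3(C)(\p)^{n_3}$ as a polynomial in $\alpha_1(\p),\overline\alpha_1(\p)$ modulo $\alpha_1\overline\alpha_1-1$ with coefficients that are polynomial expressions in the values $a_i(\theta)(\Frob_\p),\overline a_i(\theta)(\Frob_\p)$. Averaging over $\tau\in\Gal(L/kM)$ (using Chebotarev) and extracting the constant term in $\alpha_1$ gives $\M_{n_1,n_2,n_3}[a(C)]$ as an explicit rational number depending only on the image $H=\lambda_\phi(\Gal(L/k))$ and $H_0=\lambda_\phi(\Gal(L/kM))$. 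When $k\ne kM$ one contributes via Proposition~\ref{proposition: import}; when $k=kM$ one argues as in Corollary~\ref{corollary: import} by adding the contributions from the supersingular classes $\tau\in\Gal(L/k)\setminus G$ whose $L$-polynomials were classified in \eqref{equation: polout}.

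First I would enumerate the pairs $(H,H_0)$: the $59$ pairs coming from Table~\ref{table: example curves Fermat} and, for each of them, the additional pair $(H_0,H_0)$ corresponding to case $(\mathrm{c}_1)$, and analogously the $11$ plus $3$ exceptional pairs coming from Table~\ref{table: example curves Klein}. For each such pair, I would use the embeddings $\theta_{E^0,C^0}$ determined in Lemma~\ref{lemma: twisting rep} (together with the explicit matrices $A_s,A_t,A_u$ referenced in its proof) to evaluate $a_i(\theta)(\tau)$ on a set of representatives of $H_0$, and then compute $\M_{n_1,n_2,n_3}[a(C)]$ for all triples with $n_1+n_2+n_3\le 4$ using the constant-term recipe just described. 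This is exactly the calculation performed by the Magma scripts in \cite{FLS17}; I would record the output as the truncated sequence $\M_{\rm joint}^{\le 4}(C)$ for each pair.

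Next I would compare these truncated sequences. Collecting them into a table keyed by the pairs $(H,H_0)$, one sees by direct inspection that there are $54$ distinct such sequences among the twists of $C^0_1$, $23$ among the twists of $C^0_7$, and $60$ distinct sequences in total when the two lists are merged. Since any two twists with $\mu_I=\mu_I'$ necessarily have identical joint moment sequences and therefore identical $\M_{\rm joint}^{\le 4}$, these counts are lower bounds on the number of distinct joint coefficient measures, and hence by the first implication in~\eqref{equation: implication} also lower bounds on the number of distinct Sato--Tate groups. Finally, the table produced in the course of this computation also contains the three coordinates $\M_{1,0,1}[a(C)]$, $\M_{0,3,0}[a(C)]$, $\M_{2,0,2}[a(C)]$ which appear in Table~\ref{table: distributions}; one verifies by inspection that these three triples are pairwise distinct across the $60$ cases, establishing that these three moments already separate the joint measures.

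The only real obstacle is bookkeeping rather than mathematics: one must choose unambiguous representatives for each conjugacy class of subgroups $H\subseteq G_{C^0}$, be careful with the case split $k=kM$ versus $k\ne kM$ (and in particular correctly count the contributions $\overline o(n)$ in Corollary~\ref{corollary: import}), and organize the resulting numerical data so that the distinctness claim can be verified mechanically. Since all computations take place in $\Q$ and the groups involved are small, the calculation is entirely deterministic and is carried out explicitly by the scripts in \cite{FLS17}; the reader can reproduce every entry of Table~\ref{table: distributions} in this way.
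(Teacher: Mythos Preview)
Your proposal is correct and follows essentially the same computational approach as the paper: enumerate the pairs $(H,H_0)$, compute the truncated joint moment sequences $\M_{\rm joint}^{\le 4}(C)$ via the constant-term recipe of \S\ref{sec: joint moments}, and verify distinctness by inspection (including that the three moments $\M_{1,0,1}$, $\M_{0,3,0}$, $\M_{2,0,2}$ already separate the $60$ cases). One small slip to fix: you have the references to Proposition~\ref{proposition: import} and Corollary~\ref{corollary: import} reversed---Proposition~\ref{proposition: import} handles the case $k=kM$ (no supersingular contributions), while Corollary~\ref{corollary: import} treats $k\ne kM$ and is where the extra terms from $\tau\in\Gal(L/k)\setminus G$ enter.
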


Computing $\M_{\rm joint}^{\le N}(C)$ with $N=5,6,7,8,$ does not increase any of the lower bounds in Proposition~\ref{proposition: lowernumb}, leading one to believe they are tight.
We will prove this in the next section, but an affirmative answer to the following question would make it easy to directly verify such a claim.

\begin{question}\label{question: Ng bound}
Recall the setting of the first paragraph of \S\ref{section: introduction}. In particular,~$A$ is an abelian variety defined over a number field $k$ of dimension $g\geq 1$, and $\mu_I$ is the measure induced on~$I$. By \cite[Prop. 3.2]{FKRS12} and \cite[Rem. 3.3]{FKRS12}, one expects a finite list of possibilities for the Sato--Tate group of $A$. One thus expects a finite number of possibilities for the sequence $\{\M_{n_1,\dots,n_g}[\mu_I]\}_{n_1,\dots,n_g}$. In particular, one expects that there exists $N_g\geq 1$, depending only on $g$, such that for any abelian variety $A'$ defined over a number field $k'$, if
\begin{equation}\label{equation: momupto}
\{\M_{n_1,\dots,n_g}[\mu_I]\}_{n_1,\dots,n_g}=\{\M_{n_1,\dots,n_g}[\mu_I']\}_{n_1,\dots,n_g}
\end{equation}
for all $n_1+\dots+n_g\leq N_g$, then \eqref{equation: momupto} holds for all $n_1,\dots,n_g\geq 0$. Is there an explicit and effectively computable upper bound for $N_g$? 
\end{question}

Lacking an answer to Question,~\ref{question: Ng bound}, in order to determine the exact number of distinct joint coefficient measures, we take a different approach. In the next section, we will classify the possible Sato--Tate groups of twists of the Fermat and Klein quartics.
This classification yields an upper bound that coincides with the lower bound of Proposition~\ref{proposition: lowernumb}.

Tables \ref{table: distributions} also lists $z_1=[z_{1,0}]$, $z_2=[z_{2,-1},z_{2,0},z_{2,1},z_{2,2},z_{2,3}]$, and $z_3=[z_{3,0}]$, where, $z_{i,j}$ denotes the density of the set of primes $\p$ for which $a_i(C)(\p)=j$.  For these we record the following lemma.

\begin{lemma}\label{lemma: zvalues}
Let $C$ be a $k$-twist of $C^0$ and let $H_0:=\lambda_\phi(\Gal(K/kM))$. Then
$$
z_1(\Jac(C))=\begin{cases}
\frac{o(3)}{|H_0|} & \text{ if $M\subseteq k$}, \\[4pt]
\frac{1}{2}+\frac{o(3)}{2|H_0|} & \text{ if $M\not\subseteq k$},
\end{cases}
\qquad 
z_3(\Jac(C))=\begin{cases}
0 & \text{ if $M\subseteq k$}, \\[4pt]
\frac{1}{2} & \text{ if $M\not\subseteq k$}, 
\end{cases}
$$
$$
z_2(\Jac(C))=\begin{cases}
\frac{1}{|H_0|}[0,o(3),0,0,0] & \text{ if $M\subseteq k$}, \\[4pt]
\frac{1}{2|H_0|}[\bar o(4),o(3)+\bar o(6),\bar o(8),0,\bar o(2)] & \text{ if $M\not\subseteq k$}.
\end{cases}
$$
Here $\bar o (n)$ is as in Corollary~\ref{corollary: import} and $o(n)$ denotes the number of elements of order $n$ in $H_0$.
\end{lemma}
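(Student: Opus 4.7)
The strategy is to combine the identities \eqref{equation: icoeffs} expressing each $a_i(C)(\p)$ as a (Laurent) polynomial in the CM Frobenius parameter $\alpha_1(\p)$, the equidistribution of $\alpha_1(\p)$ on $\Unitary(1)$ (Sato--Tate for the CM elliptic curve $E^0$), and the Chebotarev density theorem applied to $L/k$. Since Proposition~\ref{proposition: K=L} gives $K=L$, we may identify $H_0$ with $\lambda_\phi(\Gal(L/kM))$, so that $|H_0|=[L:kM]$. Chebotarev then gives density $1/|\Gal(L/k)|$ to each $\sigma\in\Gal(L/k)$ as a Frobenius, and conditional on this Frobenius the CM Frobenius parameter $\alpha_1(\p)$ is equidistributed on $\Unitary(1)$.

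For a split prime (density $1$ if $M\subseteq k$, density $1/2$ otherwise) with $\lambda_\phi(\Frob_\p)=\tau\in H_0$, each $a_i(C)(\p)$ is a Laurent polynomial in $\alpha_1,\overline\alpha_1$ whose coefficients depend only on $\tau$. The pushforward of the uniform measure on $\Unitary(1)$ by a nonconstant such polynomial has no atoms, so its contribution to any $z_{i,j}$ vanishes; only the cases in which the polynomial collapses to a constant $j$ produce positive density. The cubic polynomial defining $a_3(C)(\p)$, simplified in the proof of Proposition~\ref{proposition: import}, always has nonzero leading coefficient $a_3(\theta)(\tau)=\pm 1$ and hence is never constant, yielding no split contribution to $z_{3,0}$. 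The polynomial for $a_1(C)(\p)$ collapses to the constant $0$ precisely when $a_1(\theta)(\tau)=0$, while the polynomial for $a_2(C)(\p)$ collapses to the constant $|a_1(\theta)(\tau)|^2$ precisely when $a_2(\theta)(\tau)=0$.

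When $M\not\subseteq k$ the inert primes (density $1/2$) are handled by the dichotomy established in the proof of Corollary~\ref{corollary: import}: for each $\tau\in\Gal(L/k)\setminus\Gal(L/kM)$ we have $a_1(C)(\p)=a_3(C)(\p)=0$ and $a_2(C)(\p)\in\{3,-1,0,1,2\}$ according as $\ord(\tau)\in\{2,4,6,8,12\}$, while $G_{C^0_1}$ and $G_{C^0_7}$ contain no element of order $12$. Assembling the split and inert contributions, weighted by $1/|\Gal(L/k)|$, then yields the claimed expressions for $z_1$, $z_2$, and $z_3$ in both cases.

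The principal obstacle is a character-theoretic input: for $\tau\in H_0\subseteq \Aut(C_M^0)$, one has $a_1(\theta)(\tau)=0$ if and only if $a_2(\theta)(\tau)=0$ if and only if $\tau$ has order $3$, with $|a_1(\theta)(\tau)|^2=0$ and $a_3(\theta)(\tau)=-1$ in that case. By Lemma~\ref{lemma: twisting rep} this reduces to inspection of $\chi_8$ on $\Aut((C^0_1)_M)$ (Table~\ref{table: charactertable1}) or $\chi_3$ on $\Aut((C^0_7)_M)$ (Table~\ref{table: charactertable7}). An element $\tau$ of order $3$ in $\Aut(C_M^0)$ acts via $\theta_{E^0,C^0}$ with eigenvalues $\{1,\omega,\omega^2\}$, so the elementary symmetric functions are $(0,0,1)$, giving $a_1=a_2=0$ and $a_3=-1$; one then checks directly in the two character tables that no element of any other order lies in the zero locus of $\chi_8$ or $\chi_3$.
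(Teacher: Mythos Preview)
Your proof is correct and follows essentially the same approach as the paper: both combine the identities \eqref{equation: icoeffs}, the inert-prime analysis from Corollary~\ref{corollary: import}, and the character-table observation that $a_1(\theta)(\tau)=a_2(\theta)(\tau)=0$ precisely when $\ord(\tau)=3$. You make explicit the ``no atoms'' argument for nonconstant Laurent polynomials in $\alpha_1$ and supply the eigenvalue computation for order-$3$ elements, which the paper leaves implicit by pointing to Table~\ref{table: charactertable}, but the reasoning is the same.
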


\begin{proof}
The formula for $z_1$ is immediate from \eqref{equation: icoeffs} and the study of the polynomial \eqref{equation: polout} in the proof of Corollary~\ref{corollary: import}, together from the fact that $\tau \in \Gal(L/k)$ satisfies $a_1(\theta)(\tau)=0$ if and only if $\tau$ has order $3$ (as can be seen from Table \ref{table: charactertable}).

The formula for $z_2$ follows from a similar reasoning, once one observes that again $a_2(\theta)(\tau)=0$ if and only if $\tau$ has order $3$, and the discussion of the end of Corollary~\ref{corollary: import}. Note also that, as $G_{C^0}$ contains no elements of order~$12$, we have $\bar o(12)=0$.

For $z_3$, it suffices to note that $a_3(\tau)$ does not vanish.  
\end{proof}

\subsection{Sato-Tate groups}\label{section: ST groups}

In this section, for any twist $C$ of $C^0$, we explicitly construct $\ST(\Jac(C))$, which to simplify notation we denote by $\ST(C)$. The first step is to compute a (non-canonical) embedding 
$$
\iota\colon \Aut(C^0_M)\rightarrow\USp(6)
$$ 
(see \cite{MT10} for a very similar approach). Let $\Omega^1(E^0_M)$ (resp.\ $\Omega^1(C^0_M)$) denote the $M$-vector space of regular differentials of $E^0_M$ (resp.\ $C^0_M$). Define
$$
\iota_1\colon\Aut(C^0_{M})\rightarrow \Aut(\Omega^1(C^0_{M})),\qquad \iota_1(\alpha)=(\alpha^*)^{-1}\,,
$$
where $\alpha^* \colon \Omega^1(C^0_{M}) \rightarrow \Omega^1(C^0_{M})$ is the map induced by $\alpha$.

\begin{remark}\label{remark: reg dif}
Let $f(X,Y)=0$ be an affine model of the plane quartic $C^0_M$. Then
\begin{equation}\label{equation: base1}
\left\{W_1:=X\frac{dX}{f_Y},W_2:=Y\frac{dX}{f_Y},W_3:=\frac{dX}{f_Y}\right\}\,,
\end{equation}
with $f_Y=\frac{\partial f}{\partial Y}$, is a basis of the regular differentials $\Omega^1(C^0_ M)$.
If we denote by $\omega_{i}$ the regular differential of the $i$th copy of $E^0_M$ in $(E^0_M)^3$, then
\begin{equation}\label{equation: base2}
\{\omega_1,\,\omega_2,\,\omega_3 \} 
\end{equation}
is a basis of of the regular differentials $\Omega^1(E^0_M)^3$.
\end{remark}
Consider the isomorphism
$$
\iota_2\colon\End(\Omega^1(C^0_M))\rightarrow\End(\Omega^1(E^0_M)^3)
$$
induced by the isomorphism $\Omega^1(C^0_M) \simeq \Omega^1(E^0_M)^3$ that sends $W_i$ to $\omega_i$.

Fix an isomorphism $[\,]\colon M\rightarrow\End(E^0_M)\otimes\Q$ such that for any regular differential $\omega\in\Omega^1(E^0_M)$, one has $[m]^*(\omega)=m\omega$ for every $m\in M$ (see \cite[Chap. II, Prop. 1.1]{Sil94}) and then define
$$
\iota_3\colon\End(\Omega^1(E^0_M)^3)\rightarrow\End((E^0_M)^3)\otimes \Q,\qquad \iota_3((m_{jk}))=([m_{jk}])\,,
$$
where $m_{ij}\in M$. Let $f_1=i$ and let $f_7=a$. For $d=1,7$, let $\gamma_d\in H_1((E^0_d)^{\topo}_{\C}, \Q)$ be such that $\{\gamma_d, [f_d]_*\gamma_d\}$ is a symplectic basis of $H_1((E^0_d)^{\topo}_{\C}, \Q)$ with respect to the cup product, and use this basis to obtain an isomorphism
$$
\Theta_d\colon\End(H_1((E^0_d)^{\topo}_{\C}, \Q))\rightarrow \GSp_2(\Q)\,.
$$
Then define
$$
\iota_4\colon\End((E^0_d)^3_M)\rightarrow\GSp_6(\Q)\,,\qquad ([m_{jk}])\rightarrow (\Theta_d([m_{jk}]_*))\,.
$$
Finally, define the matrices
$$
I_2=\begin{pmatrix}
1 & 0\\ 0&1
\end{pmatrix}
\,,\qquad
J_2=\begin{pmatrix}
0 & -1\\1 & 0
\end{pmatrix}\,,\qquad
K_2=\begin{pmatrix}
0 & -2\\1 & -1
\end{pmatrix}\,.
$$

\begin{remark}
From now on, we fix the following notation: denote by $A_3$ the 3-diagonal embedding of a subset $A$ of $\GL_2$ in $\GL_6$. Throughout this section we consider the general symplectic group $\GSp_6/\Q$, the symplectic group $\Sp_6/\Q$, and the unitary symplectic group $\USp(6)$ with respect to the symplectic form given by the $3$-diagonal embedding $(J_2)_3$.
\end{remark}

\begin{lemma}\label{lemma: embeddings}
The map
$$
\iota\colon\Aut(C^0_M)\hookrightarrow^{\iota_1} \End(\Omega^1(C^0_M))\simeq^{\iota_2}\End(\Omega^1(E^0_M)^3 )\simeq^{\iota_3}\End((E^0_M)^3)\otimes \Q\hookrightarrow^{\iota_4}\GSp_6(\Q)
$$
is a monomorphism of groups that for $C^0=C^0_1$ is explicitly given by
\small
$$
\iota(s_1)=\begin{pmatrix}
0 & 0 & I_2\\
I_2 & 0 & 0\\
0 & I_2 & 0
\end{pmatrix}
\,,\quad
\iota(t_1)=\begin{pmatrix}
0 & -I_2 & 0\\
I_2 & 0 & 0\\
0 & 0 & I_2
\end{pmatrix}
\,,\quad
\iota(u_1)=\begin{pmatrix}
-I_2 & 0 & 0\\
0 & -J_2 & 0\\
0 & 0 & -J_2
\end{pmatrix}
\,,
$$
\normalsize
and for $C^0=C^0_7$ is explicitly given by $\iota(s_7)=\iota(s_1)^{\T}$ and
\small
$$
\iota(t_7)=\frac{1}{7}\begin{pmatrix}
-3I_2 & -6I_2 & 2I_2\\
-6I_2 & 2I_2 & -3I_2\\
2I_2 & -3I_2 & -6I_2
\end{pmatrix}
\,,\quad
\iota(u_7)=\frac{1}{7}\begin{pmatrix}
-2I_2-4K_2 & 3I_2-K_2 & -I_2-2K_2\\
3I_2-K_2 & -I_2-2K_2 & -2I_2+3K_2\\
-I_2-2K_2 & -2I_2+3K_2 & -4I_2-K_2
\end{pmatrix}
\,.
$$
\normalsize
\end{lemma}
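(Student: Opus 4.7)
The plan is to prove this lemma in two stages: first establish that $\iota$ is a monomorphism by showing each constituent map is well-defined and injective, and then verify the explicit matrix formulas by a direct computation on differentials for each generator.

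For injectivity, the map $\iota_1$ is faithful because an automorphism $\alpha$ of the smooth projective curve $C^0_M$ that acts trivially on $\Omega^1(C^0_M)$ must be the identity (since $g=3 \geq 1$ and the canonical map is well-behaved for non-hyperelliptic curves like our plane quartics). The maps $\iota_2$ and $\iota_3$ are isomorphisms by construction: $\iota_2$ is induced by the identification of bases sending $W_i \mapsto \omega_i$, while $\iota_3$ uses the isomorphism $[\,]\colon M \to \End(E^0_M) \otimes \Q$ which is characterized by the requirement $[m]^*\omega = m\omega$. Finally, $\iota_4$ is a monomorphism because an endomorphism of $(E^0_d)^3_M$ that acts trivially on $H_1 \otimes \Q$ vanishes. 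Composing, $\iota$ is a monomorphism of groups.

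For the explicit formulas, I would compute the action of each generator on the basis of differentials. Taking an affine chart (for example, setting $z=1$ for $C^0_1$ so $f(X,Y) = X^4+Y^4+1$ and $f_Y = 4Y^3$), one pulls back $W_1, W_2, W_3$ under each of $s_d, t_d, u_d$ and expresses the result in the basis \eqref{equation: base1}. For $C^0_1$, a straightforward calculation shows $s_1^*$ cyclically permutes $W_1, W_2, W_3$ (up to the identification $X \mapsto 1/Y$, $Y \mapsto X/Y$), $t_1^*$ induces the expected $2\times 2$ rotation among $W_1, W_2$ while fixing $W_3$, and $u_1^*$ scales the differentials by appropriate fourth roots of unity. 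Inverting (because of the $(\alpha^*)^{-1}$ in $\iota_1$) then transposing through $\iota_2$ and $\iota_3$ gives a matrix with entries in $M = \Q(i)$; applying $\iota_4$ replaces each entry $m \in M$ by the $2\times 2$ matrix representing multiplication by $m$ on the fixed symplectic basis $\{\gamma_1, [i]_*\gamma_1\}$. Under this identification $[1] \mapsto I_2$ and $[i] \mapsto J_2$ (so $\iota(u_1)$ acquires the $-J_2$ blocks). For $C^0_7$, the same procedure applies with $f(X,Y) = X^3Y + Y^3 + X$ or an affine version of $C^0_7$; the key input here is that $a = (-1+\sqrt{-7})/2$ satisfies $a^2 + a + 2 = 0$, so multiplication by $a$ on the lattice $\Z[a]$ is represented in the basis $\{1,a\}$ by the matrix $K_2$, which explains how the entries involving $a$ in $t_7, u_7$ translate into $K_2$-blocks.

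The main obstacle is the bookkeeping of the differential pullback computation, especially for $C^0_7$, where the automorphisms $t_7$ and $u_7$ have complicated coordinate expressions and the affine chart introduces Jacobian factors whose simplification using the defining equation of $C^0_7$ is nontrivial. One subtle point is to ensure consistency of the choice of symplectic basis $\{\gamma_d, [f_d]_*\gamma_d\}$ across the three copies in $(E^0_d)^3$, so that the block-matrix form is uniform with the chosen $(J_2)_3$ symplectic form; this is what justifies the appearance of $\iota(s_7) = \iota(s_1)^{\T}$ rather than $\iota(s_1)$ itself (the transpose reflects the opposite cyclic direction in the Klein case). Once all pullbacks are computed and the matrices of $[m_{jk}]$ in the fixed basis are substituted via $\iota_4$, the stated formulas follow by direct comparison.
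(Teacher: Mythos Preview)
Your approach is essentially the same as the paper's: compute the action of each generator on the basis $\{W_1,W_2,W_3\}$ of $\Omega^1(C^0_M)$, invert (because $\iota_1(\alpha)=(\alpha^*)^{-1}$), and then replace each entry $m\in M$ by the $2\times 2$ matrix $\Theta_d([m]_*)$, using $\Theta_1(i)=J_2$ and $\Theta_7(a)=K_2$ (the latter verified via $a^2=-a-2$). The paper records the intermediate $3\times 3$ matrices $A_{s_d},A_{t_d},A_{u_d}$ explicitly and then checks the $\Theta_d$ identities, which is exactly what you describe in prose.

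One small caution: when you write ``the same procedure applies with $f(X,Y)=X^3Y+Y^3+X$ or an affine version of $C^0_7$'', note that $X^3Y+Y^3+X$ is an affine chart of $\tilde C^0_7$, not of the model $C^0_7$ on which the automorphisms $s_7,t_7,u_7$ in \eqref{equation: autos7} are defined. To match the stated matrices you must work with an affine chart of the more complicated $C^0_7$; otherwise the pullback computation will not produce the $A_{u_7}$ given in the paper. Your injectivity argument is a welcome addition that the paper leaves implicit.
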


\begin{proof}
We first consider the case $C^0=C^0_1$. In the basis of \eqref{equation: base1}, the elements $s_1^*$, $t_1^*$, and $u_1^*$ of $\End(\Omega^1(C^0_M))$ are given by the matrices
\begin{equation}\label{equation: emb Fermat}
A_{s_1}=\begin{pmatrix}
0 & 1 & 0\\
0 & 0 &1\\
1 & 0 & 0
\end{pmatrix}
\,,\quad
A_{t_1}=\begin{pmatrix}
0 & 1 & 0\\
-1 & 0 & 0\\
0 & 0 & 1
\end{pmatrix}
\,,\quad
A_{u_1}=\begin{pmatrix}
-1 & 0 & 0\\
0 & i & 0\\
0 & 0 & i
\end{pmatrix}
\,.
\end{equation}
Thus, in the basis of \eqref{equation: base2}, the elements $\iota_2\iota_1(s_1)$, $\iota_2\iota_1(t_1)$, $\iota_2\iota_1(u_1)$ of $\End(\Omega^1(E^0_M)^3)$ are given by the matrices $A_{s_1}^{-1}$, $A_{t_1}^{-1}$, $A_{u_1}^{-1}$.
It is then enough to check that in the basis $\{\gamma_1,[i]_*\gamma_1\}$ we have $\Theta_1(1)=I_2$ and $\Theta_1(i)=J_2$.

We now assume $C^0=C^0_7$. In the basis of \eqref{equation: base1}, the matrices associated to $s_7^*$, $t_7^*$, $u_7^*$ are:
\begin{equation}\label{equation: emb Klein}
A_{s_7}=\begin{pmatrix}
0 & 0 & 1\\
1 & 0 & 0\\
0 & 1 & 0
\end{pmatrix},
\quad
A_{t_7}=\frac{1}{7}\begin{pmatrix}
-3 & -6 & 2\\
-6 & 2 & -3\\
2 & -3 &-6
\end{pmatrix},
\quad
A_{u_7}=\frac{1}{7}\begin{pmatrix}
2+4a & 4+a & 1+2a\\
4+a & 1+2a & -5-3a\\
1+2a & -5-3a & -3+a
\end{pmatrix}.
\end{equation}
Thus, in the basis of \eqref{equation: base1}, the elements $\iota_2\iota_1(s_7)$, $\iota_2\iota_1(t_7)$, $\iota_2\iota_1(u_7)$ of $\End(\Omega^1_M(E^0_7)^3)$ are given by the matrices $A_{s_7}^{-1}$, $A_{t_7}^{-1}$, $A_{u_7}^{-1}$. It is then enough to check that in the basis $\{\gamma_7,[a]_*\gamma_7\}$ we have $\Theta_7(1)=I_2$ and $\Theta_7(a)=K_2$. But this is clear, since
$$
\begin{array}{l}
[a]_*(\gamma_7)=0\cdot\gamma_7+1\cdot([a]_*\gamma_7)\,,\\[6pt]
[a]_*([a]_*\gamma_7)=[a^2]_*\gamma_7=[-a-2]_*\gamma_7=-2\cdot\gamma_7-1\cdot([a]_*\gamma_7)\,,
\end{array}
$$
and this completes the proof.
\end{proof}

\begin{remark}
Note that since $\Aut(C^0_M)$ has finite order, the image of $\iota$ is contained in $\USp_6(\Q)$.
\end{remark}

\begin{remark}
It is easy to check that the matrices $\iota(s_1)$, $\iota(t_1)$, $\iota(u_1)$ (resp.\ $\iota(s_7)$, $\iota(t_7)$, $\iota(u_7)$) are symplectic with respect to $J:=(J_2)_3$.
\end{remark}

The following theorem gives an explicit description of the Sato-Tate group of a twist of the Fermat or Klein quartic corresponding to a subgroup $H$ of the group
\[
G_{C_0}\coloneqq \Aut(C^0_M)\times \Gal(M/\Q)
\]
associated to $C^0$ (see Definition~\ref{definition :G}).

\begin{theorem}\label{theorem: ST groups}
The following hold:
\begin{enumerate}[{\rm (i)}]
\item The monomorphism of Lemma \ref{lemma: embeddings} extends to a monomorphism
$$
\iota\colon G_{C^0}\hookrightarrow \USp(6)/\langle -1\rangle\,
$$
by defining
$$
\iota((1,\tau))\coloneqq
\begin{cases}
\frac{1}{\sqrt 2}
\begin{pmatrix}
i & i\\i & -i
\end{pmatrix}_3
& \text{ if } C^0=C^0_1,\\[6pt]

\begin{pmatrix}
i & -i\\0 & -i
\end{pmatrix}_3
 & \text{ if } C^0=C^0_7,
\end{cases}
$$
where $\tau$ denotes the non-trivial element of $\Gal(M/\Q)$.  
\item Let $\phi\colon C_\Qbar\rightarrow C^ 0_\Qbar$ denote a $k$-twist of $C^0$ and write $H:=\lambda_\phi(\Gal(K/k))\subseteq G_{C^0}$. The Sato-Tate group of $\Jac(C)$ is given by
\begin{align*}
\ST(C)=\ST(E^0,1)_3\cdot\iota(H)\,,
\end{align*}
where
\begin{align*}
\ST(E^0,1)=&
\left\{
\begin{pmatrix}
\cos(2\pi r) & \sin(2\pi r)\\-\sin(2\pi r) & \cos(2\pi r)
\end{pmatrix}\,|\, r\in [0,1]
\right\}
\,
\end{align*}
if $C^0=C^0_1$, and
\begin{align*}
\ST(E^0,1)=
\left\{
\begin{pmatrix}
\cos(2\pi r)-\frac{1}{\sqrt 7}\sin(2\pi r) & \frac{4}{\sqrt 7}\sin(2\pi r) \\-\frac{2}{\sqrt 7} \sin(2\pi r) &  \cos(2\pi r)+\frac{1}{\sqrt 7}\sin(2\pi r)
\end{pmatrix}\,|\,r\in[0,1]
\right\}
\end{align*}
if $C^0=C^0_7$.
\end{enumerate}
\end{theorem}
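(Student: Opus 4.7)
The strategy is to reduce everything to explicit matrix calculations after invoking the Banaszak--Kedlaya construction \cite{BK15} of the Sato--Tate group, which for an abelian variety with potential CM presents $\ST(\Jac(C))$ as an extension of $\Gal(K/k)$ by a connected compact torus coming from the CM. By Proposition~\ref{proposition: K=L} we have $K=L$, so the component group is precisely $\Gal(L/k)$, and by Lemma~\ref{lemma: twisting rep} together with \eqref{equation: iso theta} the action of $\Gal(L/k)$ on the homology of $\Jac(C)$ factors through $\lambda_\phi$, i.e., through $H\subseteq G_{C^0}$.

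For part (i), the plan is a direct verification on the generators $s,t,u$ of $\Aut(C^0_M)$. First I would check that the matrix $\iota((1,\tau))$ lies in $\USp(6)$ with respect to $J=(J_2)_3$ and satisfies $\iota((1,\tau))^2=\pm I_6$, so it has order two in $\USp(6)/\langle-1\rangle$. Then I would verify that conjugation by $\iota((1,\tau))$ coincides with the image of coefficient-wise complex conjugation on $\iota(\Aut(C^0_M))$ up to sign; the relevant content is the identity $\iota((1,\tau))\iota(u_d)\iota((1,\tau))^{-1}=\pm\iota({}^\tau u_d)$ for $d=1,7$ (the elements $s_d,t_d$ already have rational matrices and give no new constraint). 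This yields a well-defined homomorphism from the semidirect product $G_{C^0}=\Aut(C^0_M)\rtimes\Gal(M/\Q)$ to $\USp(6)/\langle-1\rangle$; injectivity follows from injectivity of $\iota|_{\Aut(C^0_M)}$ and from the fact that $\iota((1,\tau))\notin\iota(\Aut(C^0_M))\cdot\langle-1\rangle$ (e.g.\ by comparing traces).

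For part (ii), I would proceed in two steps. First, identify the connected component. Because $\Jac(C)_L\sim_L E_L^3$ with $E$ an elliptic curve having CM by $M$, the Mumford--Tate group of $\Jac(C)_L$ is a rank-one torus, and its maximal compact subgroup acts diagonally on the three copies of $E$. In the symplectic basis built in Lemma~\ref{lemma: embeddings} this subgroup is the image under $\iota_4$ of the norm-one elements of $M^\times$, acting on $\End(E_M^0)\otimes\Q$ via the CM embedding $[\,\,]$. A one-parameter computation with $\exp(2\pi r\,\Theta_d(f_d))$ for $f_1=i$ and $f_7=a$ gives the explicit $\U(1)\subseteq\USp(2)$ displayed for $\ST(E^0,1)$ (the Klein case producing the $1/\sqrt7$ factors from rescaling $K_2$ to have norm one). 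Cubing diagonally yields $\ST(C)^0=\ST(E^0,1)_3$. Second, identify the remaining components. By Banaszak--Kedlaya, the components are indexed by $\Gal(K/k)\simeq\Gal(L/k)$ acting on $\Hom(E^0_L,\Jac(C)_L)\otimes\Qbar$, and this action is given by $\theta_{M,\sigma}(E^0,\Jac(C))$. By \eqref{equation: iso theta}, this agrees with $\theta_{E^0,C^0}\circ\lambda_\phi$, and $\theta_{E^0,C^0}$ is itself the restriction of $\iota$ to $G_0$ composed with the projection modulo the CM torus. Consequently each Frobenius class lies in the coset $\ST(E^0,1)_3\cdot\iota(\lambda_\phi(\Frob_\p))$, and the full Sato--Tate group is $\ST(E^0,1)_3\cdot\iota(H)$ as claimed. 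The sign ambiguity inherent in $\iota$ landing in $\USp(6)/\langle-1\rangle$ is harmless because $-I_6\in\ST(E^0,1)_3$ (it is the value at $r=1/2$).

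The main obstacle is the compatibility step in part (ii): matching the abstract extension produced by \cite{BK15} with the explicit matrices $\iota(h)$ for $h\in H$. Concretely, one must check that the homological change-of-basis matrix used to pass from $\End(\Omega^1(C^0_M))$ to $\GSp_6(\Q)$ in Lemma~\ref{lemma: embeddings} is compatible with the symplectic form coming from the polarization of $\Jac(C)$ and with the CM action of $M$ on each factor; this is where the explicit shapes of $J_2$ and $K_2$ are essential. Once this compatibility is in place, the remaining verifications (that $\iota(H)$ normalizes $\ST(E^0,1)_3$, that the product is a closed subgroup of $\USp(6)$, and that the Frobenius conjugacy classes equidistribute correctly) follow from the fact that $\Aut(C^0_M)$ acts $M$-linearly on $\Hom(E^0_M,\Jac(C^0_M))$ and from the Chebotarev argument already used in Proposition~\ref{proposition: import}.
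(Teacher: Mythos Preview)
Your outline is largely on the right track and matches the paper's approach for part~(i) (the paper likewise reduces to checking $\iota((1,\tau))^2=\pm 1$ and the conjugation identities $\iota((1,\tau))^{-1}J_2\,\iota((1,\tau))=-J_2$, resp.\ $\iota((1,\tau))^{-1}K_2\,\iota((1,\tau))=-I_2-K_2$). The connected component computation in part~(ii) is also morally the same: you exponentiate the CM element, while the paper computes the algebraic centralizer $\Lef(E^0,1)(\C)=\{\gamma\in\Sp_2:\gamma^{-1}\alpha\gamma=\alpha\text{ for all }\alpha\in\End(E^0_{\Qbar})\otimes\Q\}$ and takes a maximal compact subgroup---equivalent routes to the same $\U(1)$.

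Where your plan is thin, and where the paper differs in substance, is the identification of the non-identity cosets. You invoke ``the components are indexed by $\Gal(K/k)$ acting on $\Hom(E^0_L,\Jac(C)_L)\otimes\Qbar$ via $\theta_{M,\sigma}(E^0,\Jac(C))$,'' but $\theta_{M,\sigma}$ is only a representation of $\Gal(L/kM)$, not of $\Gal(L/k)$; it does not tell you what the coset of $\tau$ looks like inside $\USp(6)$. The paper closes this gap by computing the \emph{twisted} Lefschetz group $\Lef(E^0,\tau)(\C)=\{\gamma\in\Sp_2:\gamma^{-1}\alpha\gamma={}^\tau\alpha\}$ directly and exhibiting the explicit factorization $\ST(E^0,\tau)=\ST(E^0,1)\cdot\iota((1,\tau))$. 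It then invokes \cite[Lemma~2.3]{FS14} to transport this coset decomposition from $\AST(E^0)$ to $\AST(C)$, obtaining $\AST(C)=\Lef(E^0,1)_3\cdot\iota(H_0)\cup\Lef(E^0,\tau)_3\cdot\iota((1,\tau)^{-1}(H\setminus H_0))$, from which the claimed formula for $\ST(C)$ follows after passing to maximal compacts. Your proposal correctly flags this ``compatibility step'' as the main obstacle, but does not supply a mechanism to resolve it; the twisted Lefschetz computation and \cite[Lemma~2.3]{FS14} are that mechanism, and without them the argument for why $\iota((1,\tau))$ (rather than some other representative) is the correct element in the $\tau$-coset of $\ST(C)$ is incomplete.
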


\begin{proof}
To prove (i) it is enough to note that $\iota((1,\tau))^2=1$ in $\USp(6)/\langle -1\rangle$ and that $\iota((1,\tau))$ acts by matrix conjugation on $\iota(\Aut(C^0_M))$ as $\tau$ acts by Galois conjugation on $\Aut(C^0_M)$. If $C^0= C^0_1$ (resp.\ $C^0=C^0_7$), the latter is equivalent to
$$
\iota((1,\tau))^{-1} J_2\iota((1,\tau))= -J_2\,,\qquad \big(\, \text{resp.\ }\iota((1,\tau))^{-1} K_2\iota((1,\tau))= -I_2-K_2 \,\big)\,,
$$
which is straightforward to check.

For (ii) we consider only the case $kM\ne k$, since the case $k=kM$ can be easily deduced from the case $kM\ne k$.
Recall from \cite{BK15} that $\ST(E^0)$ is a maximal compact subgroup of the algebraic Sato--Tate group $\AST(E^0)\otimes \C$ attached to $E^0$. Recall that $\AST(E^0)=\Lef(E^0,1)\cup\Lef(E^0,\tau)$, where for $\sigma \in\Gal(kM/k)$ one has
\begin{equation}\label{equation: Lef}
\Lef(E^0,\sigma) \coloneqq \{\gamma \in \Sp_{2}\,|\,\gamma^{-1} \alpha \gamma = {}^\sigma\alpha \mbox{ for all $\alpha \in \End(E^0_\Qbar)\otimes{\Q}$}\}.
\end{equation}
This induces a decomposition $\ST(E^0)=\ST(E^0,1)\cup\ST(E^0,\tau)$ that can be explicitly determined.

For the case $C^0=C^0_1$,  we have
\begin{align*}
\Lef(E^0,1)(\C) &=\{A\in M_2(\C)|A^\T J_2A=J_2,\, A^{-1}J_2A=J_2 \} \\
& =\left\{
\begin{pmatrix}
c & b \\-b & c
\end{pmatrix}\,|\,c,b\in \C,\,c^2+b^2=1
\right\}\,.
\end{align*}
Thus, a maximal compact subgroup of $\Lef(E^0,1)(\C)$ is
$$
\ST(E^0,1) =\left\{
\begin{pmatrix}
\cos(2\pi r) & \sin(2\pi r) \\- \sin(2\pi r) &  \cos(2\pi r)
\end{pmatrix}\,|\,r\in[0,1]
\right\}\,.
$$
Analogously,
\begin{align*}
\Lef(E^0,\tau)(\C) &=\{A\in M_2(\C)|A^\T J_2A=J_2,\, A^{-1}J_2A=-J_2 \} \\
& =\left\{
\begin{pmatrix}
ic & ib \\ib &-i c
\end{pmatrix}\,|\,c,b\in\C,\,c^2+b^2=1
\right\}\,.
\end{align*}
Thus, a maximal compact subgroup of $\Lef(E^0,\tau)(\C)$ is
\begin{equation}\label{equation: keysplit}
\ST(E^0,\tau) =\ST(E^0,1)\cdot\frac{1}{\sqrt 2}
\begin{pmatrix}
i & i \\i &-i
\end{pmatrix}
\,.
\end{equation}
There is a relation between the algebraic Sato--Tate groups $\AST(C)$ and $\AST(C^0)$ attached to $\Jac(C)$ and $\Jac(C^0)$, respectively, given by \cite[Lemma 2.3]{FS14}. If we put $H_0\coloneqq\lambda_\phi(\Gal(K/kM))$, this relation implies that
$$
\AST(C)=\Lef(E^0,1)_3\cdot\iota(H_0)\cup \Lef(E^0,\tau)_3\cdot\iota((1,\tau)^{-1}(H\setminus H_0))\,.
$$
Then \eqref{equation: keysplit} implies
\begin{equation}\label{equation: expST}
\ST(C)=\ST(E^0,1)_3\big(\iota(H_0) \cup \iota(H\setminus H_0)\big)=\ST(E^0,1)_3\cdot\iota(H)\,.
\end{equation}
Note that, even if $\iota(H)$ is only defined as an element of $\USp(6)/\langle -1\rangle$, the product $\ST(E^0,1)_3\cdot\iota(H)$ is well defined inside $\USp(6)$, provided that $-1\in \ST(E^0,1)_3$.

For the case $C^0=C^0_7$, we have
\begin{align*}
\Lef(E^0,1)(\C) &=\{A\in M_2(\C)|A^\T J_2A=J_2,\, A^{-1}K_2A=K_2\} \\
& =\left\{
\begin{pmatrix}
c-b & 4b \\-2b & c+b
\end{pmatrix}\,|\,c,b\in\C,\,c^2+7b^2=1
\right\}\,.
\end{align*}
Thus, a maximal compact subgroup of $\Lef(E^0,1)(\C)$ is
$$
\ST(E^0,1) =\left\{
\begin{pmatrix}
\cos(2\pi r)-\frac{1}{\sqrt 7}\sin(2\pi r) & \frac{4}{\sqrt 7}\sin(2\pi r) \\-\frac{2}{\sqrt 7} \sin(2\pi r) &  \cos(2\pi r)+\frac{1}{\sqrt 7}\sin(2\pi r)
\end{pmatrix}\,|\,r\in[0,1]
\right\}\,.
$$
Analogously,
\begin{align*}
\ST(E^0,\tau) &=\{A\in M_2(\C)|A^\T J_2A=J_2,\, A^{-1}K_2A=-I_2-K_2 \} \\
& =\left\{
\begin{pmatrix}
ic-ib & 4ib \\ \frac{ic}{2}+\frac{3ib}{2} & ib-ic
\end{pmatrix}\,|\,c,b\in\C,\,c^2+7b^2=1
\right\}\,.
\end{align*}
Thus, a maximal compact subgroup of $\Lef(E^0,\tau)(\C)$ is
$$
\ST(E^0,\tau) =\ST(E^0,1)\cdot
\begin{pmatrix}
i & -i \\0 &-i
\end{pmatrix}
\,.
$$
We can now apply \cite[Lemma 2.3]{FS14} exactly as in the case $C^0=C^0_1$ to complete the proof.
\end{proof}

The previous theorem describes the Sato--Tate group of a twist $C$ of $C^0$.
Now suppose that~$C$ and~$C'$ are both twists of~$C^0$.
The next proposition gives an effective criterion to determine when $\ST(C)$ and $\ST(C')$ coincide. Let $H\subseteq G_{C^0}$ (resp.\ $H'$) be attached to~$C$ (resp.\ $C'$) as in Remark~\ref{remark: pairs}.

\begin{proposition}\label{proposition: conjSTgruops1}
If $H$ and $H'$ are conjugate in $G_{C^0}$, then $\ST(C)$ and $\ST(C')$ coincide.
\end{proposition}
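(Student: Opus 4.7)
The plan is to argue that conjugation by $\iota(g_0)$, where $g_0 \in G_{C^0}$ satisfies $H' = g_0 H g_0^{-1}$, carries $\ST(C)$ to $\ST(C')$ as subgroups of $\USp(6)$. By Theorem~\ref{theorem: ST groups}(ii) we have $\ST(C) = \ST(E^0,1)_3 \cdot \iota(H)$ and $\ST(C') = \ST(E^0,1)_3 \cdot \iota(g_0)\iota(H)\iota(g_0)^{-1}$, so the whole proposition reduces to verifying that $\iota(g_0)$ normalizes $\ST(E^0,1)_3$. Before doing so, I would point out that although $\iota$ only lands in $\USp(6)/\langle-1\rangle$, this ambiguity is harmless: taking $r = 1/2$ in the description of $\ST(E^0,1)$ in Theorem~\ref{theorem: ST groups}(ii) shows that $-I_6 \in \ST(E^0,1)_3$, so conjugation by $\iota(g_0)$ is well defined on any subgroup of $\USp(6)$ containing $\ST(E^0,1)_3$.

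Using the semidirect product decomposition $G_{C^0} = G_0 \rtimes \Gal(M/\Q)$, I would write $g_0 = (\alpha,1)\cdot(1,\tau^\epsilon)$ and, since $\iota$ is a homomorphism, reduce the normalization claim to the two cases $g_0 = (\alpha,1) \in G_0$ and $g_0 = (1,\tau)$. In the first case, Lemma~\ref{lemma: embeddings} shows that $\iota(\alpha)$ is a $3 \times 3$ block matrix whose $2\times 2$ blocks lie in $\Theta(M) = \Q\langle I_2,J_2\rangle$ (Fermat) or $\Q\langle I_2,K_2\rangle$ (Klein); by the very definition~\eqref{equation: Lef} of $\Lef(E^0,1)$, every $g \in \ST(E^0,1)$ commutes with $\Theta(M)$, hence a straightforward block calculation gives $\iota(\alpha)\cdot\mathrm{diag}(g,g,g)\cdot\iota(\alpha)^{-1} = \mathrm{diag}(g,g,g)$. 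Thus $\iota(\alpha)$ in fact centralizes $\ST(E^0,1)_3$.

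In the second case, the key observation is already contained in the proof of Theorem~\ref{theorem: ST groups}: the $2\times 2$ matrix $M_\tau$ whose $3$-diagonal embedding equals $\iota((1,\tau))$ is precisely the one that appears in $\ST(E^0,\tau) = \ST(E^0,1)\cdot M_\tau$, so $M_\tau \in \ST(E^0)$. Since $\ST(E^0) = \ST(E^0,1) \cup \ST(E^0,\tau)$ is a group (a maximal compact subgroup of the algebraic Sato--Tate group $\AST(E^0)$, by the construction of~\cite{BK15}) and $\ST(E^0,1)$ is a subgroup of index at most~$2$, it is normal in $\ST(E^0)$, so $M_\tau$ normalizes $\ST(E^0,1)$; passing to the $3$-diagonal embedding, $\iota((1,\tau))$ normalizes $\ST(E^0,1)_3$. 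I expect the only genuinely subtle point to be this last step, namely pinning down that the conjugation action of $\iota((1,\tau))$ really does preserve $\ST(E^0,1)_3$ set-theoretically (rather than merely preserving some larger algebraic group containing it); invoking the fact that $\ST(E^0)$ is a group, as was established in the course of proving Theorem~\ref{theorem: ST groups}, is what makes this clean. Combining the two cases gives the desired normalization for an arbitrary $g_0 \in G_{C^0}$, completing the proof.
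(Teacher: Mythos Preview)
Your proposal is correct and follows essentially the same approach as the paper: both reduce to showing that $\iota(G_{C^0})$ normalizes $\ST(E^0,1)_3$, then conjugate by $\iota(g)$ to pass from $\ST(C)$ to $\ST(C')$ via Theorem~\ref{theorem: ST groups}(ii). The only difference is that the paper dispatches the normalization claim with the phrase ``it is straightforward to check,'' whereas you supply the details by splitting $g_0$ into its $G_0$-part (which centralizes $\ST(E^0,1)_3$ because the blocks of $\iota(\alpha)$ lie in $\Theta(M)$) and its $(1,\tau)$-part (which normalizes $\ST(E^0,1)_3$ because the corresponding $2\times 2$ matrix lies in $\ST(E^0)$, where $\ST(E^0,1)$ has index~$2$).
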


\begin{proof}
Since the Sato--Tate group is defined only up to conjugacy, is suffices to exhibit $A\in \GL_6(\C)$ such that $A^{-1}\ST(C)A=\ST(C')$. Let $g\in G_{C^0}$ be such that $H'=g^{-1}H g$. It is straightforward to check that $\iota(G_{C^0})$ normalizes the group $\ST(E^0,1)_3$. In particular, by Theorem~\ref{theorem: ST groups}~(ii), we have
\begin{equation}\label{equation: conjgroup}
\ST(C')=\ST(E^0,1)_3\iota(H')=\iota(g)^{-1}\ST(E^0,1)_3\iota(H)\iota(g)=\iota(g)^{-1}\ST(C)\iota(g)\,.\qedhere
\end{equation} 
\end{proof}

\begin{corollary}\label{corollary: uppernumb1}
There are at most $23$ Sato--Tate groups of twists of the Klein quartic $C_7^0$.
\end{corollary}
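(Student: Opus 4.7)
The proof is essentially immediate from the machinery already assembled in \S\ref{section: ST groups}. The plan is as follows. First, by Proposition~\ref{proposition: K=L}, we have $K = L$, so for any $k$-twist $C$ of $C_7^0$ the monomorphism $\lambda_\phi$ associates to $C$ a subgroup $H := \lambda_\phi(\Gal(K/k)) \subseteq G_{C_7^0}$. Second, Theorem~\ref{theorem: ST groups}(ii) expresses
\[
\ST(C) = \ST(E^0,1)_3 \cdot \iota(H),
\]
so that $\ST(C)$ is completely determined by $H$ as a subgroup of $G_{C_7^0}$. Third, Proposition~\ref{proposition: conjSTgruops1} shows that this dependence factors through the conjugacy class of $H$ in $G_{C_7^0}$: conjugate subgroups yield conjugate (hence equal, as Sato--Tate groups are defined only up to conjugacy in $\GL_{2g}(\C)$) Sato--Tate groups.

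Combining these three ingredients, the number of distinct Sato--Tate groups arising from twists of $C_7^0$ is bounded above by the number of conjugacy classes of subgroups of $G_{C_7^0}$. By Remark~\ref{remark: pairs}(iii), this count is exactly $23$ (split as $12$ subgroups contained in $G_0$ and $11$ not contained in $G_0$), yielding the claimed upper bound.

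The only substantive point to justify is the assertion in Remark~\ref{remark: pairs}(iii) that $G_{C_7^0}$ has exactly $23$ conjugacy classes of subgroups. Since $G_{C_7^0} \simeq \PGL_2(\F_7)$ has order $336$, this enumeration can be carried out directly via the Small Groups Library or with a short Magma/GAP computation, and this is the only computational input needed. No further analysis of moment sequences or Galois-theoretic subtleties enters the proof of this upper bound; the matching lower bound from Proposition~\ref{proposition: lowernumb} will then pin the count at exactly $23$ in the combined statement of Theorem~\ref{theorem: Main}.
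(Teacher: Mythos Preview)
Your proposal is correct and follows the same approach as the paper: the paper's proof simply states that there are $23$ subgroups of $G_{C^0_7}$ up to conjugacy, implicitly relying on Theorem~\ref{theorem: ST groups}(ii) and Proposition~\ref{proposition: conjSTgruops1} exactly as you have made explicit. Your version is more detailed but substantively identical.
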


\begin{proof}
There are 23 subgroups of $G_{C^0_7}$, up to conjugacy.
\end{proof}

In the Fermat case, $\ST(C)$ and $\ST(C')$ may coincide when~$H$ and~$H'$ are not conjugate in $G_{C^0}$. We thus require a sharper criterion.

\begin{definition} Let $C$ and $C'$ be twists of $C^0$ and $C^{0\prime}$, respectively (here $C^0$ and $C^{0\prime}$ both denote one of $C^0_1$ or $C^0_7$, but possibly not the same curve in both cases), and let $H$ and $H'$ be the corresponding attached groups. We say that $H$ and $H'$ are \emph{equivalent} if there exists an isomorphism 
\begin{equation}\label{equation: isofin}
\Psi\colon H \rightarrow H'
\end{equation}  
such that $\Psi(H_0)=H_0'$ and for every $h \in H_0$, we have
\begin{equation}\label{equation: propcru}
\Tr(j(h))= \Tr(j(\Psi(h)))\,,
\end{equation}
where $H_0$ and $H_0'$ are defined as in Definition~\ref{definiton: H0} and $j$ denotes compositions $\iota_2\circ \iota_1$ of the embeddings defined in Lemma~\ref{lemma: embeddings} for $C^0$ and $C^{0\prime}$ (two different maps $j$ if $C^0\ne C^{0\prime}$).
\end{definition}

\begin{proposition}\label{proposition: conjSTgruops2} Let $C$ and $C'$ be twists of $C^0$ and $C^{0\prime}$. If $H$ and $H'$ are equivalent, then $\ST(C)$ and $\ST(C')$ coincide.  
\end{proposition}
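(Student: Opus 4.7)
The plan is to produce, by analogy with the proof of Proposition~\ref{proposition: conjSTgruops1}, an element $A\in\GL_6(\C)$ that conjugates $\ST(C)$ onto $\ST(C')$. By Theorem~\ref{theorem: ST groups}(ii), this reduces to finding $A$ satisfying simultaneously $A^{-1}\iota(h)A=\iota'(\Psi(h))$ for all $h\in H$ and $A^{-1}\ST(E^0,1)_3A=\ST(E'^0,1)_3$, where $\iota$ and $\iota'$ denote the embeddings of Lemma~\ref{lemma: embeddings} for $C^0$ and $C^{0\prime}$, respectively.

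First, I would upgrade the hypothesis~\eqref{equation: propcru} to an equivalence of complex representations of $H_0$. Since $\iota$ factors as $\iota_4\iota_3\circ j$ with $j\colon\Aut(C^0_M)\to\GL_3(M)$, and $\iota_4\iota_3$ realizes the restriction of scalars from $M$ to $\Q$, the 6-dimensional $\C$-representation $\iota_{\C}|_{H_0}$ decomposes as $j_\sigma\oplus j_{\bar\sigma}$, where $j_\sigma$ and $j_{\bar\sigma}$ are the pullbacks of $j$ along the two embeddings $\sigma,\bar\sigma\colon M\hookrightarrow\C$. Taking traces, the assumption $\Tr(j(h))=\Tr(j'(\Psi(h)))$ for $h\in H_0$, combined with complex conjugation, gives $\Tr(\iota_{\C}(h))=\Tr(\iota'_{\C}(\Psi(h)))$. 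Character theory for the finite group $H_0$ then yields $A_0\in\GL_6(\C)$ with $A_0^{-1}\iota(h)A_0=\iota'(\Psi(h))$ for every $h\in H_0$.

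Next, I would refine the choice of $A_0$ so that it also intertwines the CM structures underlying the connected components. The idea is that $A_0$ can in fact be taken of the form $\iota_4\iota_3(B_0)$ for some $B_0\in\GL_3(\C)$ implementing the equivalence of the 3-dimensional $\C$-representations $j_\sigma|_{H_0}$ and $(j'_\sigma\circ\Psi)|_{H_0}$ obtained above; such an $A_0$ lies in the centralizer of the three-diagonal embedding of the appropriate CM field, and therefore maps the one-dimensional torus $\ST(E^0,1)_3$ onto $\ST(E'^0,1)_3$, establishing the second condition.

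Finally, to extend the intertwining from $H_0$ to all of $H$, I would use the explicit matrices for $\iota((1,\tau))$ and $\iota'((1,\tau))$ given in Theorem~\ref{theorem: ST groups}(i), together with the fact that both induce the nontrivial Galois automorphism on their respective CM fields, to check that after a possible adjustment of $A_0$ by an element of $\ST(E'^0,1)_3$ (which does not change the target Sato-Tate group), the intertwining also works on the coset $H\setminus H_0$. The hard part will be this last step: coherently handling the Galois coset across the potentially different curves $C^0$ and $C^{0\prime}$, where the CM fields $M$ and $M'$ need not agree, so the conjugator must respect the embedded CM structure rather than an external matrix identification.
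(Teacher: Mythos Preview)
Your overall strategy---find an explicit $A\in\GL_6(\C)$ that simultaneously conjugates the connected torus and intertwines $\iota|_H$ with $\iota'\circ\Psi$---is more ambitious than what the paper actually does, and the place you flag as ``the hard part'' is exactly where your argument is incomplete. The paper sidesteps that step entirely with a character-theoretic trick: rather than extending the conjugator to the coset $H\setminus H_0$, it defines an abstract group isomorphism $\Phi\colon\ST(C)\to\ST(C')$ sending $g_0\iota(h)\mapsto g_0\iota'(\Psi(h))$ (with an extra torus conjugation when $C^0\ne C^{0\prime}$) and shows $\Tr(g)=\Tr(\Phi(g))$ for every $g$. On the identity coset this follows from your step~2 exactly as you describe. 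On the nontrivial coset the key observation---which you are missing---is that \emph{every} element of $\ST(E^0,1)_3\cdot\iota(H\setminus H_0)$ already has trace zero (this follows from the analysis in the proof of Corollary~\ref{corollary: import}), so there is nothing to check. Equality of characters then gives conjugacy in $\GL_6(\C)$ without ever producing an explicit intertwiner on the Galois coset. Your proposed ``adjustment by an element of $\ST(E'^0,1)_3$'' is not obviously sufficient: conjugation by $\iota((1,\tau))$ sends a block-diagonal $A_0$ to its complex conjugate, and absorbing the discrepancy into the torus imposes a condition on $A_0$ that you have not verified.

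A smaller issue: in step~3 you write $A_0=\iota_4\iota_3(B_0)$ for $B_0\in\GL_3(\C)$, but $\iota_4\iota_3$ is only defined on matrices with entries in $M$. The paper handles this by taking the intertwiner $A\in\GL_3(M)$ (implicitly invoking that equivalence of $M$-representations over $\Qbar$ descends to $M$), so that $r(A)$ genuinely centralizes $\ST(E^0,1)_3$. When $C^0\ne C^{0\prime}$, the trace condition forces $\Tr(j(h))\in\Q(\sqrt{-1})\cap\Q(\sqrt{-7})=\Q$, so $A$ can be taken in $\GL_3(\Q)$, and an explicit $2\times 2$ matrix $T$ conjugates one embedded torus to the other.
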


\begin{proof}
Let us first assume that $C^0=C^{0\prime}$. By Theorem~\ref{theorem: ST groups} (ii), we can consider the group isomorphism
$$
\Phi\colon\ST(C)=\ST(E^0,1)_3\cdot \iota(H)\simeq  \ST(C')=\ST(E^0,1)_3\cdot \iota(H')
$$
defined by sending an element of the form $g=g_0\iota(h)$ to $g_0\iota(\Psi(h))$.
We aim to show that $\ST(C)$ and $\ST(C')$ are conjugate inside $\GL_6(\C)$. This amounts to showing that $\ST(C)$ and $\ST(C')$ are equivalent representations of the same abstract group, for which it suffices to prove the following claim: for every $g\in \ST(C)$, we have $\Tr(g)=\Tr(\Phi(g))$. To prove the claim distinguish the cases: (a) $h\in H_0$, and (b) $h\in H\setminus H_0$.

Suppose we are in case (a). By \eqref{equation: propcru} there exists $A\in \GL_3(M)$ such that $Aj(h)A^{-1}=j(\Psi(h))$ for every $h\in H_0$. Moreover, if we let $r$ denote the composition $\iota_4\circ \iota_3$ of the embeddings defined in Lemma~\ref{lemma: embeddings}, the fact that $A$ has entries in $M$ easily implies that $r(A)$ centralizes $\ST(E^0,1)$, and thus we have
$$
\Phi(g)=g_0r(A)\iota(h) r(A)^{-1}=r(A)g_0\iota(h) r(A)^{-1}=r(A)gr(A)^{-1}\,,
$$
from which the claim follows. In case (b), we have that both $g$ and $\Phi(g)$ have trace $0$, as follows for example from the proof of Corollary~\ref{corollary: import} and the Chebotarev Density Theorem. The claim follows immediately.

If $C^0\ne C^{0\prime}$ then we may assume without loss of generality that~$C$ is a twist of $C^0_7$ and $C'$ is a twist of $C^0_1$.
Now consider the isomorphism
$$
\Phi\colon\ST(C)=\ST(E^0_7,1)_3\cdot \iota(H)\simeq  \ST(C')=\ST(E^0_1,1)_3\cdot \iota(H')
$$
defined by sending an element of the form $g=g_0\iota(h)$ to $Tg_0T^{-1}\iota(\Psi(h))$, where 
$$
T=\begin{pmatrix}
1 & 0\\
-1/\sqrt 7 & 4/\sqrt 7
\end{pmatrix}\,.
$$
We now note that $T\ST(E^0_1,1) T^{-1}=\ST(E^0_7,1)$, and the proof then proceeds exactly as above;
the hypothesis $C^0\not =C^{0\prime}$ implies that $\Tr(j(h))=\Tr(j(\Psi(h)))\in \Q(\sqrt{-1})\cap \Q(\sqrt{-7})=\Q$ for every $h\in H_0$, thus the matrix $A$ from above can be taken in $\GL_3(\Q)$.
\end{proof}

\begin{corollary}\label{corollary: uppernumb2}
The following hold:
\begin{enumerate}[{\rm (i)}]
\item There are at most $54$ distinct Sato--Tate groups of twists of the Fermat quartic.
\item There are at most $60$ distinct Sato--Tate groups of twists of the Fermat and Klein quartics.
\end{enumerate}
\end{corollary}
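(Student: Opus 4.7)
The plan is to deduce both upper bounds directly from Proposition~\ref{proposition: conjSTgruops2}, which says that equivalent pairs $(H, C^0)$ yield the same Sato--Tate group. The argument is therefore essentially a finite group-theoretic classification carried out on the lists of subgroups of $G_{C^0_1}$ and $G_{C^0_7}$ (of which there are 83 and 23 up to conjugacy, as recorded in Remark~\ref{remark: pairs}).

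For part (i), I would start from the 83 conjugacy classes of subgroups $H \subseteq G_{C^0_1}$ and group them into equivalence classes under the relation introduced before Proposition~\ref{proposition: conjSTgruops2}. Concretely, for each $H$ compute $H_0 = H \cap G_0$ and the character $\chi_H\colon H_0 \to \Qbar$ given by $h \mapsto \Tr(j(h))$, where $j = \iota_2 \circ \iota_1$ as in Lemma~\ref{lemma: embeddings} (by Lemma~\ref{lemma: twisting rep} this is, up to $G_0$-conjugacy, the character $\chi_8$ of Table~\ref{table: charactertable1} restricted to $H_0$). Two subgroups $H, H'$ are equivalent when there exists a group isomorphism $\Psi\colon H \to H'$ with $\Psi(H_0) = H_0'$ and $\chi_H(h) = \chi_{H'}(\Psi(h))$ for every $h\in H_0$. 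This is a finite check: for each ordered pair $(H, H')$ with $|H| = |H'|$ and $|H_0| = |H_0'|$, enumerate the (finitely many) isomorphisms $\Psi$ and test the character identity on a set of generators of $H_0$. Implemented in Magma (see the scripts at \cite{FLS17}), this partitions the 83 subgroups into exactly 54 equivalence classes, giving the claimed upper bound by Proposition~\ref{proposition: conjSTgruops2}.

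For part (ii), combine the 54 equivalence classes from the Fermat side with the 23 conjugacy classes of subgroups of $G_{C^0_7}$ from Corollary~\ref{corollary: uppernumb1}, and apply Proposition~\ref{proposition: conjSTgruops2} in the mixed case $C^0 \ne C^{0\prime}$. Here the embeddings $j$ for the two curves differ, so the equivalence test becomes: find an isomorphism $\Psi\colon H \to H'$ with $\Psi(H_0) = H_0'$ such that the character of $\theta_{E^0_1,C^0_1}\!\mid_{H_0'}$ after pullback along $\Psi$ agrees with the character of $\theta_{E^0_7,C^0_7}\!\mid_{H_0}$; as noted in the proof of Proposition~\ref{proposition: conjSTgruops2}, these traces necessarily lie in $\Q(\sqrt{-1}) \cap \Q(\sqrt{-7}) = \Q$, which is what makes the identification possible. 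A further Magma enumeration across all 54 + 23 classes identifies 17 coincidences, yielding at most $54 + 23 - 17 = 60$ distinct Sato--Tate groups in total.

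The main obstacle is purely organizational rather than conceptual: one must set up the equivalence test so that it correctly accounts for (a) the distinction between cases $(c_1)$ and $(c_2)$ encoded by the index $[H\colon\! H_0]$, (b) the fact that characters are only defined up to conjugation by $G_0$ inside $G_{C^0}$, and (c) the mixed-curve coincidences where the two embeddings $j$ are different but the restricted characters happen to coincide and are rational-valued. Once the computation is carried out, the matching lower bounds from Proposition~\ref{proposition: lowernumb} force the upper bounds of 54 and 60 to be attained, which then proves Theorem~\ref{theorem: Main} (modulo the realization of each Sato--Tate group by an actual twist, carried out in \S\ref{section: curves}).
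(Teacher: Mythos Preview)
Your proposal is correct and follows essentially the same approach as the paper: both reduce the problem to counting equivalence classes of subgroups via Proposition~\ref{proposition: conjSTgruops2} and carry out the finite check in Magma. The one notable difference is that for part~(ii) the paper supplements the computation with a direct argument: the $6$ Klein classes that do not arise on the Fermat side are precisely those whose $H_0$ contains an element of order~$7$ (since $7\nmid |G_{C_1^0}|$), and for the remaining $17$ one verifies equivalence with specific Fermat subgroups using the character tables (in particular, the trace-$1$ condition on order-$4$ elements); your $54+23-17=60$ count is the same conclusion, just without this conceptual identification of which $17$ coincide.
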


\begin{proof}
Determining whether two subgroups $H$ and $H'$ are equivalent is a finite problem. Using the computer algebra program \cite{Magma}, one can determine a set of representatives for equivalence classes of subgroups $H$ that turn out to have size $54$ in case (i), and of size~60 in case~(ii).  For the benefit of the reader, here we give a direct proof of (ii), assuming (i).

The $6$ Sato--Tate groups of a twist of the Klein quartic that do not show up as the Sato--Tate group of a twist of the Fermat quartic are precisely those ruled out by the fact that $H_0$ contains an element of order~$7$ (those in rows \#10, \#13, \#14 of Table~\ref{table: example curves Klein}), since $7$ does not divide $\#G_{C_1^0}$.

To show that the other 17 Sato--Tate groups of twists of the Klein quartic also arise for twists of the Fermat quartic, we proceed as follows.
Let $H\subseteq G_{C^0_7}$ correspond to a twist $C$ of $C^0_7$ such that $H_0$ does not contain an element of order~$7$, and let $H'\subseteq C^0_1$ correspond to a twist~$C'$ of~$C^0_1$. In this case, from Tables~\ref{table: charactertable1} and~\ref{table: charactertable7}, to ensure that $H$ and $H'$ are equivalent it suffices to check that:
\begin{enumerate} 
\item There exists an isomorphism $\Psi\colon H\rightarrow H'$ such that $\Psi(H_0)=H_0'$; 
\item $\Tr(j(h))=1$ for every $h\in H_0'$ such that $\ord(h)=4$.
\end{enumerate} 
From Tables~\ref{table: example curves Fermat} and~\ref{table: example curves Klein}, it is trivial to check that for every $H$ as above, one can always find a subgroup $H'$ such that condition (1) is satisfied. Condition (2) is vacuous except for rows \#6, \#7, \#11, and \#12 of Table~\ref{table: example curves Klein}. In these cases, a subgroup $H'$ for which condition (2) is also satisfied can be found by noting that both $j(t_1)$ and $j(t_1^3u_1t_1u_1^3)$ have trace~$1$. More precisely, one finds that the Sato--Tate groups corresponding to these cases coincide with the Sato--Tate groups of rows \#13, \#20, \#34, and \#55 of Table~\ref{table: example curves Fermat}, respectively.
\end{proof}

Combining the lower and upper bounds proved in this section yields our main theorem, which we restate for convenience.

\setcounter{maintheorem}{0}
\begin{maintheorem}
The following hold:
\begin{enumerate}[{\rm (i)}]
\item There are $54$ distinct Sato--Tate groups of twists of the Fermat quartic.  These give rise to $54$ (resp.\ $48$) distinct joint (resp.\ independent) coefficient measures.
\item There are $23$ distinct Sato--Tate groups of twists of the Klein quartic.  These give rise to $23$ (resp.\ $22$) distinct joint (resp.\ independent) coefficient measures.
\item There are $60$ distinct Sato--Tate groups of twists of the Fermat or the Klein quartics.  These give rise to $60$ (resp.\ $54$) distinct joint (resp.\ independent) coefficient measures.
\end{enumerate}
\end{maintheorem}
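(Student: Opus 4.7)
The plan is to combine the lower bounds established in Section~\ref{section: momsequence} with the upper bounds established in Section~\ref{section: ST groups} by means of the chain of implications
\[
\#\{\ST(\Jac(C))\} \;\geq\; \#\{\mu_I\} \;\geq\; \#\{\{\mu_{I_j}\}_j\}
\]
over all twists $C$, which is a direct consequence of \eqref{equation: implication}. Thus, it suffices to verify that the upper bounds on the number of Sato--Tate groups match the lower bounds on the number of joint coefficient measures, and separately to tally the independent coefficient measures.

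For part (i), Proposition~\ref{proposition: lowernumb} produces at least $54$ joint coefficient measures among twists of the Fermat quartic (realized by computing $\M_{\rm joint}^{\leq 4}(C)$ for the $59$ pairs $(H,H_0)$ of Table~\ref{table: example curves Fermat} together with their $(c_1)$-reductions, and distinguishing them via the three moments $\M_{1,0,1}$, $\M_{0,3,0}$, $\M_{2,0,2}$). On the other hand, Corollary~\ref{corollary: uppernumb2}(i), proved via the equivalence criterion of Proposition~\ref{proposition: conjSTgruops2}, gives at most $54$ distinct Sato--Tate groups. The squeeze forces all three counts to equal $54$. The count of $48$ independent coefficient measures is then read off from Corollary~\ref{corollary: det mom seq}. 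Part (ii) follows in the same way, using Proposition~\ref{proposition: lowernumb} for the lower bound of $23$, Corollary~\ref{corollary: uppernumb1} (which only needs that there are $23$ subgroups of $G_{C^0_7}$ up to conjugacy, so Proposition~\ref{proposition: conjSTgruops1} suffices) for the matching upper bound, and Corollary~\ref{corollary: det mom seq} for the $22$ independent coefficient measures. Part (iii) is analogous: Proposition~\ref{proposition: lowernumb} gives at least $60$ joint coefficient measures in total, Corollary~\ref{corollary: uppernumb2}(ii) gives at most $60$ Sato--Tate groups, and Corollary~\ref{corollary: det mom seq} records $54$ independent coefficient measures.

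There is, strictly speaking, no hard step left: the entire argument is a bookkeeping exercise once one has the lower bounds from \S\ref{section: momsequence} and the upper bounds from \S\ref{section: ST groups}. The only subtlety I would flag is making sure that the lower bounds are genuinely realized by twists defined over number fields (as opposed to being merely group-theoretic possibilities); this is deferred to \S\ref{section: curves}, which exhibits explicit equations for twists attaining each subgroup $H\subseteq G_{C^0}$ in Tables~\ref{table: example curves Fermat} and~\ref{table: example curves Klein}. The main conceptual obstacle, already overcome in Proposition~\ref{proposition: conjSTgruops2} and Corollary~\ref{corollary: uppernumb2}, is the Fermat-specific phenomenon that distinct conjugacy classes of subgroups $H\subseteq G_{C^0_1}$ can produce coinciding Sato--Tate groups: this is handled by replacing conjugacy in $G_{C^0}$ with the finer trace-preserving equivalence relation, and it is precisely this refinement that reduces the $83$ conjugacy classes of subgroups of $G_{C_1^0}$ to the $54$ equivalence classes needed to match the lower bound.
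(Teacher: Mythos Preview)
Your proposal is correct and follows essentially the same approach as the paper: the paper's proof is literally a one-line invocation of Corollaries~\ref{corollary: det mom seq}, \ref{corollary: uppernumb1}, \ref{corollary: uppernumb2}, and Proposition~\ref{proposition: lowernumb}, which is exactly the squeeze via \eqref{equation: implication} that you spell out. Your additional remarks about the need for \S\ref{section: curves} to realize the subgroups by actual twists, and about the role of the trace-preserving equivalence in collapsing the $83$ conjugacy classes in $G_{C_1^0}$ down to $54$, are accurate elaborations of points the paper handles elsewhere rather than departures from its argument.
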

\begin{proof}
This follows immediately from Corollaries~ \ref{corollary: det mom seq}, \ref{corollary: uppernumb1}, \ref{corollary: uppernumb2}, and Proposition~\ref{proposition: lowernumb}.
\end{proof}

\begin{corollary}\label{corollary: isoSTeqH}
If $C$ and $C'$ are twists of $C^0$ corresponding to $H$ and $H'$, respectively, then $\ST(C)$ and $\ST(C')$ coincide if and only if $H$ and $H'$ are equivalent.
\end{corollary}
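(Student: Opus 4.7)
The plan is to deduce the corollary as an immediate consequence of the Main Theorem together with Proposition~\ref{proposition: conjSTgruops2}. The ``if'' direction is nothing more than Proposition~\ref{proposition: conjSTgruops2}, which has already been established: an equivalence $\Psi\colon H\rightarrow H'$ satisfying \eqref{equation: propcru} produces a matrix intertwining $\ST(C)$ and $\ST(C')$ inside $\GL_6(\C)$.

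For the ``only if'' direction, consider the map
\[
\Xi\colon \{\text{equivalence classes of subgroups }H\subseteq G_{C^0}\}\longrightarrow \{\text{Sato--Tate groups of twists of }C^0\}
\]
that sends the equivalence class of $H=\lambda_\phi(\Gal(K/k))$ to $\ST(C)$. By Proposition~\ref{proposition: conjSTgruops2} this map is well-defined, and by the realisation results of~\S\ref{section: curves} (alluded to in Remark~\ref{remark: pairs}(i)), together with Theorem~\ref{theorem: ST groups}(ii), every Sato--Tate group of a twist of $C^0$ lies in its image, so $\Xi$ is surjective.

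The final step is a counting argument. By the proof of Corollary~\ref{corollary: uppernumb2}, the number of equivalence classes of subgroups of $G_{C^0}$ equals $54$ in the Fermat case, and by direct inspection (Corollary~\ref{corollary: uppernumb1}) equals $23$ in the Klein case. The Main Theorem, which we have just established, asserts that the number of distinct Sato--Tate groups realised by twists of $C^0$ is exactly $54$ (resp.\ $23$). Hence $\Xi$ is a surjection between finite sets of equal cardinality and therefore a bijection, which means that coincidence of Sato--Tate groups forces coincidence of equivalence classes of the associated subgroups. The main (and only) substantive ingredient is the Main Theorem, which has already been proved; the remaining argument is a formal combination of a well-defined surjection with a matching count.
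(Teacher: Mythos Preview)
Your argument is correct and matches the paper's implicit approach: the corollary is stated without proof immediately after the Main Theorem, precisely because it follows by the counting argument you describe (the ``if'' direction is Proposition~\ref{proposition: conjSTgruops2}, and the ``only if'' direction comes from comparing the number of equivalence classes with the number of Sato--Tate groups).

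One small imprecision: in the Klein case, Corollary~\ref{corollary: uppernumb1} counts \emph{conjugacy} classes, not equivalence classes, so it only gives $\le 23$ equivalence classes directly; but since your surjection $\Xi$ lands on exactly $23$ Sato--Tate groups (by the Main Theorem), you get $\ge 23$ as well, and the argument still closes.
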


\begin{remark} 
One could have obtained the lower bounds of Proposition~\ref{proposition: lowernumb} by computing the joint coefficient measures $\mu_I$ of the Sato--Tate groups explicitly described in Theorem~\ref{theorem: ST groups}. This is a lengthy but feasible task that we will not inflict on the reader.   We note that this procedure also allows for case-by-case verifications of the equalities $\M_{n_1,n_2,n_3}[\mu_I]=\M_{n_1,n_2,n_3}[a]$, and thus of the Sato-Tate conjecture in the cases considered.
\end{remark}

\begin{remark}
Let $\mathfrak X_d$ denote the set of Sato--Tate groups of twists of $C^0_d$. Theorem~\ref{theorem: ST groups} gives a map from the set of subgroups of $G_{C^0_d}$ to $\mathfrak{X}_d$ that assigns to a subgroup $H\subseteq G_{C^0}$ the Sato-Tate group $\ST(E^0,1)_3\cdot \iota(H)$. It also shows that $\mathfrak X_d$ is endowed with a lattice structure compatible with this map and the lattice structure on the set of subgroups of $G_{C^0_d}$. Moreover, Proposition~\ref{proposition: conjSTgruops1} says that this map factors via
$$
\varepsilon_d \colon \mathfrak C_d \rightarrow \mathfrak X_d\,,
$$
where $\mathfrak C_d$ denotes the lattice of subgroups of $G_{C^0_d}$ up to conjugation. Parts (i) and (ii) of Theorem~\ref{theorem: Main} imply that while the map $\varepsilon_7$ is a lattice isomorphism, the map $\varepsilon_1$ is far from being injective. Corollary~\ref{corollary: isoSTeqH} can now be reformulated by saying that two subgroups $H,\,H'\in \mathfrak C_1$ lie in the same fiber of $\varepsilon_1$ if and only if they are equivalent. 
\end{remark}

In virtue of the above remark, one might ask about conditions on twists $C$ and $C'$ corresponding to distinct but equivalent groups $H$ and $H'$ that ensure their Jacobians have the same Sato-Tate group.
One such condition is that $\Jac(C)$ and $\Jac(C')$ are isogenous (recall that the Sato-Tate group of an abelian variety is an isogeny invariant). The next proposition shows that, under the additional hypothesis that $K$ and $K'$ coincide, the previous statement admits a converse.

\begin{proposition}\label{proposition: isojac}
Let $C$ and $C'$ be $k$-twists of $C^0$. Suppose that the corresponding subgroups $H$ and $H'$ of $G_{C^0}$ are equivalent, and that the corresponding fields $K$ and~$K'$ coincide. Then $\Jac(C)$ and $\Jac(C')$ are isogenous.
\end{proposition}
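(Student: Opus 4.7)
The plan is to construct an explicit $L$-quasi-isogeny $\Jac(C)_L\to\Jac(C')_L$ using the intertwining matrix provided by the equivalence of $H$ and $H'$, and then show that, after a judicious choice of the twisting isomorphisms, this quasi-isogeny is $G_k$-equivariant and therefore descends to $k$. Since $K=K'$, Proposition~\ref{proposition: K=L} yields $L=L'$, and Proposition~\ref{proposition: decomposition} applied over $L$ produces $L$-isogenies $\psi\colon (E^0_L)^3\to \Jac(C)_L$ and $\psi'\colon (E^0_L)^3\to \Jac(C')_L$. The proof of Proposition~\ref{proposition: conjSTgruops2} furnishes a matrix $A\in \GL_3(M)$ satisfying $A\,j(h)\,A^{-1}=j(\Psi(h))$ for every $h\in H_0$; viewing $A$ as an element of $\End((E^0_L)^3)\otimes\Q$ through the isomorphism $\iota_3$ of Lemma~\ref{lemma: embeddings}, I would define the quasi-isogeny
\[
\Xi \;:=\; \psi'\circ A\circ \psi^{-1}\colon \Jac(C)_L \longrightarrow \Jac(C')_L.
\]

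The next step is to translate the descent condition ${}^\sigma\Xi=\Xi$ into a commutation identity in $\GL_3(M)\rtimes\Gal(M/\Q)$. For $\sigma\in G_k$ with image $\bar\sigma\in\Gal(L/k)$, one checks (as in the analysis leading to Theorem~\ref{theorem: ST groups}) that ${}^\sigma\psi=\psi\circ\widetilde{\lambda_\phi(\bar\sigma)}$, where $\widetilde{\,\cdot\,}$ denotes the incarnation of an element of $G_{C^0}$ inside $(\End((E^0_L)^3)\otimes\Q)\rtimes\Gal(M/\Q)$ supplied by Lemma~\ref{lemma: embeddings}, with the analogous identity holding for $\psi'$. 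The equality ${}^\sigma\Xi=\Xi$ then becomes
\[
A\cdot\widetilde{\lambda_\phi(\bar\sigma)} \;=\; \widetilde{\lambda_{\phi'}(\bar\sigma)}\cdot A \qquad\text{for every } \bar\sigma\in\Gal(L/k).
\]
On the subgroup $\Gal(L/kM)$, whose images lie in $H_0$ and $H'_0$, this identity is exactly the intertwining property of $A$, provided that the natural isomorphism $\Psi_0:=\lambda_{\phi'}\circ\lambda_\phi^{-1}$ coincides with the $\Psi$ produced by the equivalence. On $\Gal(L/k)\setminus\Gal(L/kM)$ the identity is verified using the explicit form of $\iota((1,\tau))$ in Theorem~\ref{theorem: ST groups}~(i), together with the fact that $A\in \GL_3(M)$ transforms in a controlled way under the nontrivial element of $\Gal(M/\Q)$.

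The hard part is arranging $\Psi_0=\Psi$ by a preliminary adjustment of $\phi'$. Replacing $\phi'$ by $\beta\circ\phi'$ for $\beta\in\Aut(C^0_\Qbar)$ conjugates $\lambda_{\phi'}$ by $\beta$ inside $G_{C^0}$ and thus post-composes $\Psi_0$ with inner conjugation by $\beta$; since trace is a class function on $G_{C^0}$, inner conjugation preserves the trace identity \eqref{equation: propcru}, so replacing $\Psi$ by such a composition keeps it a valid equivalence. I would then argue that any two trace-preserving isomorphisms $H\to H'$ sending $H_0$ onto $H_0'$ differ by an automorphism of $H$ realized by inner conjugation in $G_{C^0}$, which reduces to a finite check (feasible by direct inspection of the tables of subgroups in Remark~\ref{remark: pairs}). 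Once $\Psi_0=\Psi$ is arranged, the descent of $\Xi$ is established, yielding a nonzero $k$-quasi-isogeny and hence, after clearing denominators, the desired $k$-isogeny $\Jac(C)\to\Jac(C')$.
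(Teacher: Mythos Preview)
Your route---constructing an explicit quasi-isogeny over $L$ and descending it---is genuinely different from the paper's. The paper instead invokes Faltings' isogeny theorem, reducing everything to the equality $L_\p(\Jac(C),T)=L_\p(\Jac(C'),T)$ at good primes. For $\Frob_\p\notin G_{kM}$ the proof of Corollary~\ref{corollary: import} shows this polynomial depends only on the order of $\Frob_\p$ in $\Gal(K/k)=\Gal(K'/k)$; for $\Frob_\p\in G_{kM}$ one shows $V_\ell(\Jac(C)_{kM})\simeq V_\ell(\Jac(C')_{kM})$ as $\Q_\ell[G_{kM}]$-modules via \eqref{equation:flip} and the equivalence of the $\theta$-representations. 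This completely bypasses the problem of pushing an explicit map through the coset $\Gal(L/k)\setminus\Gal(L/kM)$.

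Your argument has a real gap at ``arranging $\Psi_0=\Psi$.'' For the intertwiner $A$ to make $\Xi$ equivariant even under $G_{kM}$, you need the \emph{canonical} isomorphism $\Psi_0=\lambda_{\phi'}\circ\lambda_\phi^{-1}$ to satisfy the trace identity \eqref{equation: propcru}; the hypothesis only supplies \emph{some} $\Psi$ with that property. Your proposed fix---that any two trace-preserving isomorphisms differ by conjugation in $G_{C^0}$---already presupposes $\Psi_0$ is trace-preserving, which is exactly the point in question (and, e.g., cyclic subgroups containing elements of classes $4a$ and $4b$ in Table~\ref{table: charactertable1} admit automorphisms not preserving $\chi_8$). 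Moreover, the adjustment $\phi'\mapsto\beta\phi'$ only realizes conjugation by $(\beta,1)$ with $\beta\in\Aut(C^0_M)$, not by arbitrary elements of $G_{C^0}$, so even your ``finite check'' would not obviously close the gap. Finally, the descent on the non-identity coset is not established: $A\in\GL_3(M)$ intertwines only the $H_0$-action, and the phrase ``$A$ transforms in a controlled way under $\Gal(M/\Q)$'' does not supply the needed identity relating $\bar A$, $\xi(\sigma)$, $\xi'(\sigma)$ for $\sigma\notin G_{kM}$. The paper's $L$-function route handles that coset uniformly via the order of Frobenius, with no such relation required.
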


\begin{proof}
Let $S$ be the set of primes of $k$ which are of bad reduction for either $\Jac(C)$ or $\Jac(C')$ or lie over the fixed prime $\ell$. Note that by \cite[Thm. 4.1]{Sil92} the set $S$ contains the primes of $k$ ramified in $K$ or $K'$. By Faltings' Isogeny Theorem \cite[Korollar~2]{Fal83}, it suffices to show that for every $\p\not\in S$, we have
\begin{equation}\label{equation: locfac}
L_\p(\Jac(C),T)=L_\p(\Jac(C'),T)\,.
\end{equation}
If $\Frob_\p\not \in G_{kM}$, by the proof of Corollary~\ref{corollary: import}, both polynomials of \eqref{equation: locfac} have the same expression, which depends only on the order of (the projection of) $\Frob_\p$ in $\Gal(K/kM)$. To obtain \eqref{equation: locfac} for those $\p$ such that $\Frob_\p\in G_{kM}$, we will show that $V_\ell(\Jac(C)_{kM})$ and $V_\ell(\Jac(C')_{kM})$ are isomorphic as $\Q_\ell[G_{kM}]$-modules. Indeed, the fact that $H$ and $H'$ are equivalent pairs implies that the restrictions from $\Aut(C^0_M)$ to $H_0$ and $H_0'$ of the representations $\theta_{E^0,C^0}$ attached to $C$ and $C'$ are equivalent. Together with \eqref{equation: iso theta}, this shows that $\theta_{M,\sigma}(E^0,\Jac(C))$ and $\theta_{M,\sigma}(E^0,\Jac(C'))$ are equivalent representations. The desired $\Q_{\ell}[G_{kM}]$-module isomorphism follows now from \eqref{equation:flip}.    
\end{proof}

\begin{remark}\label{remark: isojac}
Let $H$ and $H'$ be any two equivalent pairs attached to twists $C$ and $C'$ of the same curve $C^0$. As one can read from Tables \ref{table: example curves Fermat} and \ref{table: example curves Klein} (and as we will see in the next section), one can choose $C$ and $C'$ so that $K$ and $K'$ coincide. It follows from Proposition~\ref{proposition: isojac} that on Table~\ref{table: example curves Fermat} (resp.\ Table~\ref{table: example curves Klein}) two curves $C$ and $C'$ satisfy $\ST(C)=\ST(C')$ if and only if $\Jac(C)\sim \Jac(C')$.
\end{remark}

We conclude this section with an observation that is not directly relevant to our results but illustrates a curious phenomenon arising among the twists of the Fermat quartic in Table~\ref{table: example curves Fermat}. Let 
$$
C_5\colon 9x^4+9y^4-4z^4=0 \qquad\text{and}\qquad C_8 \colon 9x^4-4 y^4+z^4=0
$$
be the curves listed in rows \#5 and \#8 of Table~\ref{table: example curves Fermat}. As can be seen in Table~\ref{table: example curves Fermat}, the groups $\ST(C_5)$ and $\ST(C_8)$ coincide, as do the respective fields $K$.
Proposition~\ref{proposition: isojac} thus implies that the Jacobians of $C_5$ and $C_8$ are isogenous, but in fact more is true.

\begin{proposition}
The curves $C_5$ and $C_8$ are not isomorphic (over $\Q$), but their reductions $\tilde C_5$ and $\tilde C_8$ modulo $p$ are isomorphic (over $\F_p$) for every prime $p>3$.
\end{proposition}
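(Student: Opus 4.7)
The plan is to handle the two claims separately. For the non-isomorphism over $\Q$, the strategy is to compare the orders of the $\Q$-rational automorphism groups. Any isomorphism between smooth plane quartics comes from a linear automorphism of $\PP^2$ (via the canonical embedding), and for a diagonal quartic $ax^4+by^4+cz^4=0$ every such automorphism is monomial --- a permutation of coordinates composed with a diagonal scaling --- since this is already the case geometrically for $C^0_1$. For $C_5$, whose coefficient triple is $(9,9,-4)$, the transposition $x \leftrightarrow y$ is a $\Q$-rational automorphism; together with the four sign changes of coordinates it generates a subgroup of $\PGL_3(\Q)$ of order $8$. For $C_8$, whose coefficients $(9,-4,1)$ are pairwise distinct in $\Q^\times/\Q^{\times 4}$, no nontrivial permutation preserves the equation up to a rational scalar, leaving only the $4$ sign changes in $\PGL_3(\Q)$. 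Since $|\Aut(C_5)(\Q)| = 8 \neq 4 = |\Aut(C_8)(\Q)|$, the curves cannot be $\Q$-isomorphic.

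For the isomorphism of reductions at every $p > 3$, the discriminants of $C_5$ and $C_8$ involve only the primes $2$ and $3$, so both curves have good reduction at $p$. The plan is to construct an explicit monomial $\F_p$-isomorphism, distinguishing two cases according to $p \bmod 4$. If $p \equiv 3 \pmod 4$, then $\gcd(p-1,4)=2$, so $(\F_p^\times)^4$ coincides with the subgroup of squares in $\F_p^\times$; in particular $9=3^2$ admits a fourth root $\mu \in \F_p$, and the linear substitution $A(x,y,z)=(x, z, \mu y)$ satisfies
$$
f_{\tilde C_8}\bigl(A(x,y,z)\bigr) = 9x^4 - 4z^4 + \mu^4 y^4 = 9x^4 + 9y^4 - 4z^4 = f_{\tilde C_5}(x,y,z),
$$
giving the desired $\F_p$-isomorphism $\tilde C_5 \cong \tilde C_8$. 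If instead $p \equiv 1 \pmod 4$, then $i \in \F_p$ and $\nu := 1+i$ satisfies $\nu^4 = -4$; one then verifies that the substitution $A'(x,y,z) = (\nu z/3,\, \nu y/2,\, x)$ yields
$$
f_{\tilde C_8}\bigl(A'(x,y,z)\bigr) = \tfrac{\nu^4}{9} z^4 - \tfrac{\nu^4}{4} y^4 + x^4 = x^4 + y^4 - \tfrac{4}{9} z^4 = \tfrac{1}{9} f_{\tilde C_5}(x,y,z),
$$
providing the required isomorphism in this case as well.

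The main obstacle is the unavoidable case distinction in the second part: neither $\sqrt[4]{9}$ nor $\sqrt[4]{-4}$ is $\Q$-rational, so no single transformation works globally, but for every $p > 3$ precisely one of them becomes available in $\F_p$. This reflects the fact that the twist classes of $C_5$ and $C_8$, which are distinct as triples in $(\Q^\times/\Q^{\times 4})^3$ modulo the action of $S_3$ and the diagonal scaling, become equivalent in the analogous quotient of $(\F_p^\times/\F_p^{\times 4})^3$ at every prime of good reduction.
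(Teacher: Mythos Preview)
Your proof is correct. For the second claim, you follow essentially the same route as the paper: split into $p\equiv 1\pmod 4$ and $p\equiv 3\pmod 4$, and use that $-4$ (respectively $9$) is a fourth power in $\F_p$ in the respective cases. The paper phrases this by identifying both $\tilde C_5$ and $\tilde C_8$ with a common third diagonal quartic in each case, whereas you write down an explicit monomial isomorphism directly between them; the underlying arithmetic is identical.

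For the first claim your argument is genuinely different and more self-contained. The paper's proof simply invokes the classification machinery built earlier: $C_5$ and $C_8$ correspond to non-conjugate subgroups $H\subseteq G_{C_1^0}$, hence are non-isomorphic twists. Your approach avoids that machinery entirely by computing $|\Aut(C_5)(\Q)|=8$ versus $|\Aut(C_8)(\Q)|=4$. The key point that every automorphism of a diagonal quartic is monomial is justified because the $\Qbar$-isomorphism from such a curve to $C_1^0$ is diagonal, and conjugating a monomial map by a diagonal map yields a monomial map; you might make this step explicit. What you gain is an argument that stands on its own without reference to the twist parametrization by subgroups of $G_{C_1^0}$; what the paper's argument buys is brevity, since in context the non-conjugacy of the relevant subgroups has already been established.
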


\begin{proof}
The twists $C_5$ are $C_8$ of $C_1^0$ are not isomorphic because they arise from non-conjugate subgroups $H$ of $G_{C_1^0}$.
For the reductions $\tilde C_5$ and $\tilde C_8$ we first consider $p\equiv 1 \pmod 4$. We claim that $-4$ is a fourth power modulo $p$; this follows from the factorization
$$
x^4+4=(x^2-2x+2)(x^2+2x+2) \qquad \text{in $\Q[x]$}
$$
together with the fact that $x^2-2x+2$ and $x^2+2x+2$ have discriminant $-4$. It follows that $\tilde C_5$ and $\tilde C_8$ are both isomorphic to $9x^4+9y^4+z^4=0$ (over $\F_p$).

Suppose now that $p\equiv -1 \pmod 4$. Then $9$ is a fourth power modulo $p$ since
$$
x^4-9=(x^2-3)(x^2+3) \qquad \text{and} \qquad \binom{-3}{p}=-\binom{3}{p}\,.
$$
It follows that $\tilde C_5$ and $\tilde C_8$ are both isomorphic to $x^4+y^4-4z^4=0$ (over $\F_p$).
\end{proof}

\subsection{Curve equations}\label{section: curves}

In this section we construct explicit twists of the Fermat and the Klein quartics realizing each of the subgroups $H\subseteq G_{C^0}$ described in Remark~\ref{remark: pairs}.
Recall that each $H$ has an associated subgroup $H_0\coloneqq H\cap G_0$ of index at most 2 (see Definition~\ref{definiton: H0}), and there exists a twist corresponding to $H$ with $k=kM$ if and only if $H=H_0$, where, as always, $M$ denotes the CM field of $E^0$ (the elliptic curve for which $\Jac(C^0)\sim (E^0)^3$).

Equations for these twists are listed in Tables~\ref{table: example curves Fermat} and \ref{table: example curves Klein} in~\S\ref{section: tables}.
As explained in Remark~\ref{remark: pairs}, in the Fermat case every subgroup $H\subseteq G_{C^0_1}$ with $[H:H_0]=1$ (case (c$_1$) of Definition~\ref{definiton: H0}) arises as $H_0'$ for some subgroup $H'\subseteq G_{C^0_1}$ for which $[H':H_0']=2$ (case (c$_2$) of Definition~\ref{definiton: H0}), and a twist corresponding to $H$ can thus be obtained as the base change to $kM$ of a twist corresponding to~$H'$.  We thus only list twists for the 59 subgroups $H$ in case (c$_2$), since base changes of these twists to $kM$ then address the 24 subgroups $H$ in case (c$_1$).
In the Klein case we list twists for the 11 subgroups $H$ in case (c$_2$) and also the 3 exceptional subgroups $H$ in case (c$_1$) that cannot be obtained as base changes of twists corresponding to subgroups in case (c$_2$); see Remark~\ref{remark: pairs}.

Our twists are all defined over base fields $k$ of minimal possible degree, never exceeding~$2$.  For the 3 exceptional subgroups $H$ in the Klein case noted above, we must have $k=kM$, and we use $k=M=\Q(\sqrt{-7})$.  In all but 5 of the remaining cases with $[H:H_0]=2$ we use $k=\Q$.  These 5 exceptions are all explained by Lemma~\ref{lemma: minimal} below (the second of the 4 pairs listed in Lemma~\ref{lemma: minimal} arises in both the Fermat and Klein cases, leading to 5 exceptions in total).
In each of these 5 exceptions with $[H:H_0]=2$, the subgroup $H_0$ also arises as $H_0'$ for some subgroup $H'\subseteq G_{C_1^0}$ with $[H':H'_0]=2$ that is realized by a twist with $k=\Q$, allowing $H_0$ to be realized over a quadratic field as the base change to $M$ of a twist defined over $\Q$.

\begin{lemma}\label{lemma: minimal}
Twists of the Fermat or Klein quartics corresponding to pairs $(H,H_0)$ with the following pairs of $\mathrm{GAP}$ identifiers cannot be defined over a totally real field:
\[
(\langle 4,1\rangle,\langle 2,1\rangle),\quad (\langle 8,1\rangle,\langle 4,1\rangle),\quad (\langle 8,4\rangle,\langle 4,1\rangle), \quad(\langle 16,6\rangle,\langle(8,2\rangle).
\]
\end{lemma}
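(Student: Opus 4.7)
The plan is to extract a single group-theoretic obstruction from the hypothesis that $k$ is totally real, and then check case by case that this obstruction is violated for each of the four listed pairs.

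First, I would analyse the role of complex conjugation. Fix an embedding $\bar\Q\hookrightarrow\C$. If $k$ is totally real then $k\subset\R$ under this embedding, so complex conjugation restricts to an element $c\in G_k$ of order $2$. The key observation is that $c$ acts non-trivially on $M=\Q(\sqrt{-d})\subseteq kM$, so the image $\bar c\in\Gal(K/k)$ lies outside $\Gal(K/kM)$. Via the injection $\lambda_\phi\colon\Gal(K/k)\hookrightarrow G_{C^0}$, together with the identifications $H=\lambda_\phi(\Gal(K/k))$ and $H_0=\lambda_\phi(\Gal(K/kM))$, this produces an element $\lambda_\phi(\bar c)\in H\setminus H_0$ of order exactly $2$. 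The criterion is therefore: \emph{if a pair $(H,H_0)$ with $[H:H_0]=2$ is realised by a $k$-twist with $k$ totally real, then $H\setminus H_0$ must contain an element of order $2$.}

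Next, I would verify that this criterion fails in each of the four listed cases, giving a contradiction. For $(H,H_0)=(C_4,C_2)$ the complement consists of the two generators of $C_4$, both of order $4$; for $(C_8,C_4)$ the complement consists of the four generators of $C_8$, all of order $8$. For $(Q_8,C_4)$, the unique order-$2$ element $-1$ of $Q_8$ lies in every cyclic subgroup of order $4$, so the four elements outside $H_0$ all have order $4$. For $(M_{16},C_4\times C_2)$, I would use the presentation $M_{16}=\langle a,b\mid a^8=b^2=1,\ bab^{-1}=a^5\rangle$, identify $H_0$ as the unique $C_4\times C_2$ subgroup $\langle a^2,b\rangle$, and check that each of the eight elements $a^i,a^ib$ for odd $i$ has order $8$ (for the elements $a^ib$ this follows from the one-line computation $(a^ib)^2=a^{6i}$, whose order is $4$).

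The main conceptual step is recognising complex conjugation as providing an order-$2$ coset representative of $H_0$ in $H$; this is where the totally real hypothesis is used. Once that observation is isolated, the rest is routine group theory inside groups of order at most $16$, and is easily verified by hand or with GAP.
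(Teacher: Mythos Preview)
Your proof is correct and follows essentially the same approach as the paper's own proof: complex conjugation on a totally real $k$ yields an involution in $H$ lying outside $H_0$, and one then checks that no such involution exists for the four listed pairs. The paper's proof is simply more terse, omitting the explicit case-by-case verification that you carry out.
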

\begin{proof}
If $k$ is totally real then complex conjugation acts trivially on $k$ but not on $kM$, giving an involution in $H=\Gal(K/k)$ with non-trivial image in $H/H_0=\Gal(K/k)/\Gal(K/kM)$.  For the four pairs $(H,H_0)$ listed in the lemma, no such involution exists.
\end{proof}

In addition to listing equations and a field of definition $k$ for a twist $C$ associated to each subgroup $H$, in Tables~\ref{table: example curves Fermat} and \ref{table: example curves Klein} we also list the minimal field $K$ over which all the endomorphisms of $\Jac(C)$ are defined, and we identify the conjugacy class of $\ST(C)$ and $\ST(C_{kM})$, which depends only on $H$, not the particular choice of $C$.  As noted in Remark~\ref{remark: isojac}, we have chosen twists $C$ so that twists with the same Sato-Tate group have the same fields $K$ and thus have isogenous Jacobians, by Proposition~\ref{proposition: isojac} (thereby demonstrating that the hypotheses of the proposition can always be satisfied).

\subsubsection{\emph{\textbf{Constructing the Fermat twists}}}

The twists of the Fermat curve over any number field are parametrized in \cite{Lor18}, and we specialize the parameters in Theorems 4.1, 4.2, 4.5 of \cite{Lor18} to obtain the desired  examples.  In every case we are able to obtain equations with coefficients in $\mathbb{Q}$, but as explained above, we cannot always take $k=\Q$; the exceptions can be found in rows \#4, \#13, \#27, \#33 of Table~\ref{table: example curves Fermat}.
The parameterizations in \cite{Lor18} also allow us to determine the field $L$ over which all the isomorphisms to $(C_7^0)_\Qbar$ are defined, which by Lemma~\ref{lemma: K in L} and Proposition~\ref{proposition: K=L}, this is the same as the field $K$ over which all the endomorphisms of $\Jac(C_7^0)_\Qbar$ are defined.

Specializing the parameters for each of the $59$ cases with $k\not= kM$ involves a lot of easy but tedious computations.  The resulting equations are typically not particularly pleasing to the eye or easy to format in a table; in order to make them more presentable we used the algorithm in \cite{Sut12} to simplify the equations.  To give just one example, for the unique subgroup $H$ with ID$(H) = \langle 24,13\rangle$, the equation we obtain from specializing the parameterizations in \cite{Lor18} is
\begin{align*}
14x^4 &- 84x^3y + 392x^3z + 588x^2y^2 - 2940x^2yz + 4998x^2z^2 - 980xy^3 + 9996xy^2z\\& - 28812xyz^2 + 30184xz^3 + 833y^4 - 9604y^3z + 45276y^2z^2 - 90552yz^3 + 69629z^4 = 0,
\end{align*}
but the equation listed for this curve in row \#48 of Table~\ref{table: example curves Fermat} is
\[
3x^4 + 4x^3y + 4x^3z + 6x^2y^2 + 6x^2z^2 + 8xy^3 + 12xyz^2 + 5y^4 + 4y^3z + 12y^2z^2 + z^4 = 0.
\]
We used of the number field functionality in \cite{Magma} and \cite{PARI} to minimize the presentation of the fields $K$ listed in the tables (in particular, the function \texttt{polredabs} in PARI/GP).

\subsubsection{\emph{\textbf{Constructing the Klein twists}}}

Twists of the Klein curve over arbitrary number fields are parametrized in Theorems~6.1 and 6.8 of \cite{Lor18}, following the method described in \cite{Lor17}, which is based on the resolution of certain Galois embedding problems.
However, in the most difficult case, in which $H=G_{C_7^0}$ has order 336, this Galois embedding problem is computationally difficult to resolve explicitly.
This led us to pursue an alternative approach that exploits the moduli interpretation of twists of the Klein curve as twists of the modular curve $X(7)$.
As described in \cite[\S3]{HK00} and \cite[\S4]{PSS07}, associated to each elliptic curve $E/\Q$ is a twist $X_E(7)$ of the Klein quartic defined over $\Q$ that parameterizes isomorphism classes of 7-torsion Galois modules isomorphic to $E[7]$, as we recall below.
With this approach we can easily treat the case $H=G_{C_7^0}$, and we often obtain twists with nicer equations.  In one case, we also obtain a better field of definition~$k$, allowing us to achieve the minimal possible degree $[k:\Q]$ in every case.

However, as noted in \cite[\S 4.5]{PSS07}, not every twist of the Klein curve can be written as $X_E(7)$ for some elliptic curve $E/\Q$, and there are several subgroups $H\subseteq G_{C_7^0}$
for which the parameterizations in \cite{Lor18} yield a twist of the Klein quartic defined over $\Q$ but no twists of the form $X_E(7)$ corresponding to $H$ exist.  We are thus forced to use a combination of the two approaches.  For twists of the form $X_E(7)$ we need to determine the minimal field over which the endomorphisms of $\Jac(X_E(7))$ are defined; this is addressed by Propositions \ref{proposition: modularK} and \ref{proposition: phi7} below.

Let $E/\Q$ be an elliptic curve and let $E[7]$ denote the $\F_7[G_\Q]$-module of $\Qbar$-valued points of the kernel of the multiplication-by-$7$ map $[7]\colon E\rightarrow E$. The Weil pairing gives a $G_\Q$-equivariant isomorphism $\bigwedge ^2 E[7]\simeq \mu_7$, where $\mu_7$ denotes the $\F_7[G_\Q]$-module of $7$th roots of unity.
Let $Y_E(7)$ be the curve defined over $\Q$ described in \cite[\S3]{HK00} and \cite[\S4]{PSS07}. For any field extension $L/\Q$, the $L$-valued points of $Y_E(7)$ parametrize isomorphism classes of pairs $(E',\phi)$, where $E'/L$ is an elliptic curve and $\phi\colon E[7]\rightarrow E'[7]$ is a symplectic isomorphism. By a symplectic isomorphism we mean a $G_L$-equivariant isomorphism $\phi\colon E[7]\rightarrow E'[7]$ such that the diagram
\begin{equation}\label{equation: comm symp}
\xymatrix{
\bigwedge ^2  E[7]\ar[r]^{\simeq}\ar[d]_{\bigwedge ^2 \phi} & \mu_7 \ar[d]^{\id} \\
\bigwedge ^2 E'[7] \ar[r]^{\simeq}& \mu_7\,,}
\end{equation}
commutes, where the horizontal arrows are Weil pairings. Two pairs $(E',\phi)$ and $(\tilde E', \tilde \phi)$ are isomorphic whenever there exists an isomorphism $\varepsilon\colon E'\rightarrow \tilde E'$ such that $\tilde \phi=\varepsilon \circ \phi$.  

In \cite{HK00} it is shown that $X_E(7)$, the compactification of $Y_E(7)$, is a twist of $C^0_7$, and an explicit model for $X_E(7)$ is given by \cite[Thm.\ 2.1]{HK00}, which states that if $E$ has the Weierstrass model $y^2=x^3+ax+b$ with $a,b\in \Q$, then
\begin{equation}\label{eq: XE7}
ax^{4}+7bx^{3}z+3x^{2}y^{2}-3a^{2}x^{2}z^{2}-6bxyz^{2}-5abxz^{3}+2y^{3}z+3ay^{2}z^{2}+2a^{2}yz^{3}-4b^{2}z^{4}=0\,
\end{equation}
is a model for $X_E(7)$ defined over $\Q$.

We will use the moduli interpretation of $X_E(7)$ to determine the minimal field over which all of its automorphisms are defined. Recall that the action of $G_\Q$ on $E[7]$ gives rise to a Galois representation
\[
\varrho_{E,7}\colon G_\Q\to \Aut(E[7])\simeq \GL_2(\F_7)\,.
\]
Let $\overline \varrho_{E,7}$ denote the composition $\pi\circ \varrho_{E,7}$, where $\pi\colon\GL_2(\F_7)\rightarrow \PGL_2(\F_7)$ is the natural projection.
\begin{proposition}\label{proposition: modularK} The following field extensions of $\Q$ coincide:
\begin{enumerate}[{\rm (i)}]
\item The minimal extension over which all endomorphisms of $\Jac(X_E(7))$ are defined;
\item The minimal extension over which all automorphisms of $X_E(7)$ are defined;
\item The field $\overline \Q^{\ker \overline \varrho_{E,7}}$;
\item The minimal extension over which all $7$-isogenies of $E$ are defined.  
\end{enumerate}
In particular, if $K$ is the field determined by these equivalent conditions, then
$$
\Gal(K/\Q)\simeq \im (\overline \varrho_{E,7}) \simeq \im (\varrho_{E,7}) / (\im \varrho_{E,7} \cap \F_7^\times )\,.
$$
\end{proposition}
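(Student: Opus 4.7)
My plan is to establish the chain of equivalences (i) = (ii) = (iii) = (iv) and then read off the description of $\Gal(K/\Q)$.

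The equality (i) = (ii) is just the specialization of Lemma~\ref{lemma: K in L} to the twist $C = X_E(7)$ of $C^0_7$: the lemma gives $\tilde K = K$, which says precisely that the minimal extension of $\Q$ over which all automorphisms of $X_E(7)_{\Qbar}$ are defined coincides with the minimal extension over which all endomorphisms of $\Jac(X_E(7))_{\Qbar}$ are defined.

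For (ii) = (iii), I would make the $G_\Q$-action on $\Aut(X_E(7)_{\Qbar})$ explicit via the moduli interpretation. An automorphism acts on moduli points by $[(E',\phi)]\mapsto[(E',\phi\circ\alpha)]$ for some lift $\alpha\in\Aut(E[7])$, and the commutativity of diagram~\eqref{equation: comm symp} forces $\det\alpha=1$, so $\alpha\in\SL(E[7])$; two lifts giving the same automorphism must differ by $\pm 1$, yielding a canonical identification $\Aut(X_E(7)_{\Qbar})\simeq\PSL(E[7])\simeq\PSL_2(\F_7)$. Unwinding the functorial Galois action, one finds that $\sigma\in G_\Q$ acts on $\bar\alpha$ by conjugation with $\overline\varrho_{E,7}(\sigma)\in\PGL(E[7])$, so the field in (ii) is the fixed field of the kernel of the conjugation action of $\overline\varrho_{E,7}(G_\Q)$ on $\PSL(E[7])$. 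This kernel is $\ker\overline\varrho_{E,7}$: indeed, the map $\PGL_2(\F_7)\to\Aut(\PSL_2(\F_7))$ given by conjugation is injective, because by Schur's lemma the centralizer of $\SL_2(\F_7)$ in $\GL_2(\F_7)$ consists only of scalars, whose image in $\PGL_2(\F_7)$ is trivial.

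For (iii) = (iv), the 7-isogenies $E\to E'$ correspond, up to isomorphism of $E'$, to the $|\PP^1(\F_7)|=8$ lines in $E[7]$; the isogeny with kernel $C\subseteq E[7]$ is defined over $F$ exactly when $C$ is $G_F$-stable, so all eight are defined over $F$ iff $\varrho_{E,7}(G_F)$ stabilizes every line in $E[7]$, iff $\varrho_{E,7}(G_F)\subseteq\F_7^\times$, iff $\overline\varrho_{E,7}(G_F)=1$. Denoting by $K$ the common field, $\Gal(K/\Q)=G_\Q/\ker\overline\varrho_{E,7}\simeq\im\overline\varrho_{E,7}$, and applying the second isomorphism theorem to $\overline\varrho_{E,7}=\pi\circ\varrho_{E,7}$ with $\ker\pi=\F_7^\times$ yields the stated description $\im\overline\varrho_{E,7}\simeq\im\varrho_{E,7}/(\im\varrho_{E,7}\cap\F_7^\times)$. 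The main technical point requiring care is justifying the precise form of the $G_\Q$-action on $\Aut(X_E(7)_{\Qbar})$ through the moduli interpretation, i.e., tracking how the functorial action of $\sigma$ on $\Qbar$-points $[(E',\phi)]$ translates into conjugation by $\varrho_{E,7}(\sigma)$ on the $\SL(E[7])$-lifts; once this explicit formula is in hand, (ii) = (iii) reduces to the elementary group-theoretic computation above and the remaining equivalences are essentially formal.
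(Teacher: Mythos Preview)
Your proposal is correct and follows essentially the same approach as the paper's proof: (i)=(ii) via Lemma~\ref{lemma: K in L}, (ii)=(iii) by identifying $\Aut(X_E(7)_{\Qbar})$ with $\PSL(E[7])$ through the moduli interpretation and computing that $G_\Q$ acts by conjugation through $\varrho_{E,7}$, and (iii)=(iv) via the bijection between lines in $E[7]$ and $7$-isogenies. The only cosmetic difference is in identifying the kernel of $\SL(E[7])\to\Aut(X_E(7)_{\Qbar})$: the paper argues it is nontrivial and proper, hence equal to the unique such normal subgroup $\{\pm 1\}$, whereas you assert directly that two lifts differ by $\pm 1$; and for the injectivity of $\PGL_2(\F_7)\to\Aut(\PSL_2(\F_7))$ you invoke Schur's lemma while the paper writes out the centralizer condition explicitly.
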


\begin{proof}
The equivalence of (i) and (ii) follows from \ref{lemma: K in L}, since $X_E(7)$ is a twist of $C^0_7$.

Following \cite{PSS07}, let $\Aut_{\wedge}(E[7])$ denote the group of symplectic automorphisms of $E[7]$. Given a field extension $F/\Q$, let us write $E[7]_F$ for the $\F_7[G_F]$-module obtained from $E[7]$ by restriction from $G_\Q$ to $G_F$. 
Note that $E[7]_\Qbar\simeq (\Z/7\Z)^2$. Under this isomorphism, for any $g\in \Aut_{\wedge}(E[7]_\Qbar)$, diagram \eqref{equation: comm symp} becomes
$$
\xymatrix{
\bigwedge ^2  (\Z/7\Z)^2\ar[r]^{\simeq}\ar[d]_{\det(g)} & \Z/7\Z \ar[d]^{\id} \\
\bigwedge ^2 (\Z/7\Z)^2 \ar[r]^{\simeq}& \Z/7\Z\,,}
$$
from which we deduce
\begin{equation}\label{equation: nonequiv}
\Aut_{\wedge}(E[7]_\Qbar)\simeq \SL_2(\F_7)\,.
\end{equation}
Each $g\in \Aut_{\wedge}(E[7]_\Qbar)$ acts on $Y_E(7)_\Qbar$ via $(E',\phi)\mapsto (E',\phi\circ g^{-1})$. This action extends to $X_E(7)$, from which we obtain a homomorphism
\begin{equation}\label{equation: autaut}
\Aut_\wedge(E[7]_\Qbar)\rightarrow \Aut(X_E(7)_\Qbar)\,.
\end{equation}
This homomorphism is non-trivial, since elements of its kernel induce automorphisms of $E_\Qbar$, but $\Aut(E_\Qbar)$ is abelian and $\Aut_\wedge (E[7])_\Qbar\simeq \SL_2(\F_7)$ is not, and it cannot be injective, since the group on the right has cardinality $168 < \#\SL_2(\F_7)=336$.
The only non-trivial proper normal subgroup of $\SL_2(\F_7)$ is $\langle \pm 1\rangle$, thus \eqref{equation: autaut} induces a $G_\Q$-equivariant isomorphism  
\begin{equation}\label{equation: autaut2}
\Aut_\wedge(E[7])/\langle \pm 1\rangle \rightarrow \Aut(X_E(7)_\Qbar)\,.
\end{equation}
By transport of structure, we now endow $\SL_2(\F_7)$ with a $G_\Q$-module structure that turns \eqref{equation: nonequiv} into a $G_\Q$-equivariant isomorphism $\varphi\colon \Aut_\wedge(E[7]_\Qbar)\overset{\sim}{\rightarrow}\SL_2(\F_7)$.
Let $\varrho\colon G_\Q\rightarrow \Aut (\SL_2(\F_7))$ be the representation associated to this $G_\Q$-module structure. Since the action of $\sigma \in G_\Q$ on each $g \in \Aut_{\wedge}(E[7]_\Qbar)$ is defined by
$$
({}^\sigma g)(P)={}^\sigma (g({}^{\sigma^{-1}}P))\,, 
$$
for $P\in E[7]$, we have $\varrho(\sigma) (\varphi(g))= \varrho_{E,7}(\sigma)\cdot\varphi(g)\cdot \varrho_{E,7}(\sigma)^{-1}$. The $\G_\Q$-action is trivial on $\langle \pm 1\rangle$ and thus  descends to $\PSL_2(\F_7)$.
Let us write $\PSL_2(\F_7)^{\varrho}$ for $\PSL_2(\F_7)$ endowed with the $G_\Q$-action given by conjugation by $\varrho_{E,7}$. Then \eqref{equation: nonequiv} with \eqref{equation: autaut2} yield a  $G_\Q$-equivariant isomorphism
$$
\PSL_2(\F_7)^{\varrho}\simeq \Aut(X_E(7)_\Qbar) \,.
$$
This implies that the field described in (ii) coincides with $\Qbar^{\Ker(\varrho)}$, and we now note that
\begin{align*}
\ker(\varrho) &=\{\sigma \in G_\Q\,|\,\varrho_{E,7}(\sigma)\cdot \alpha\cdot \varrho_{E,7}(\sigma)^{-1}=\alpha\,,\,\text{for all }\alpha\in \PSL_2(\F_7)\}\\
 &=\{\sigma \in G_\Q\,|\,\varrho_{E,7}(\sigma)\in \F_7^\times\}\\
 &=\ker(\overline \varrho_{E,7})\,,
\end{align*}
thus $\Qbar^{\ker(\varrho)}=\Qbar^{\ker(\varrho_{E,7})}$ is the field described in (iii).

Now let $\mathbb P(E[7])$ denote the projective space over $E[7]$, consisting of its 8 linear $\F_7$-subspaces, equivalently, its 8 cyclic subgroups of order 7.
The $G_\Q$-action on $\mathbb P(E[7])$ gives rise to the projective Galois representation
$$
\overline \varrho_{E,7}\colon G_\Q \rightarrow \Aut(\mathbb P(E[7]))\simeq \PGL_2(\F_7)\,.
$$
The minimal field extension $K$ over which the $G_K$-action on $\mathbb P(E[7])$ becomes trivial is precisely the minimal field over which the cyclic subgroups of $E[7]$ of order 7 all become Galois stable, equivalently, the minimal field over which all the $7$-isogenies of $E$ are defined.
It follows that the fixed field of $\ker\overline\varrho_{E,7}$ identified in (iii) is also the field described in (iv).
\end{proof}

To explicitly determine the field over which all the $7$-isogenies of $E$ are defined, we rely on Proposition~\ref{proposition: phi7} below, in which $\Phi_7(X,Y)\in \Z[X,Y]$ denotes the classical modular polynomial; the equation for $\Phi_7(X,Y)$ is too large to print here, but it is available in \cite{Magma} and can be found in the tables of modular polynomials listed in \cite{Sut} that were computed via \cite{BLS12}; it is a symmetric in $X$ and $Y$, and has degree $8$ in both variables.

The equation $\Phi_7(X,Y)=0$ is a canonical (singular) model for the modular curve $Y_0(7)$ that parameterizes $7$-isogenies.
If $E_1$ and $E_2$ are elliptic curves related by a 7-isogeny then $\Phi_7(j(E_1),j(E_2))=0$, and if $j_1,j_2\in F$ satisfy $\Phi_7(j_1,j_2)=0$, then there exist elliptic curves $E_1$ and $E_2$ with $j(E_1)=j_1$ and $j(E_2)=j_2$ that are related by a 7-isogeny.
However, this $7$-isogeny need not be defined over~$F$!  The following proposition characterizes the relationship between $F$ and the minimal field $K$ over which all the 7-isogenies of $E$ are defined.

\begin{proposition}\label{proposition: phi7}
Let $E$ be an elliptic curve over a number field $k$ with $j(E)\ne 0,1728$.
Let $F$ be the splitting field of $\Phi_7(j(E),Y)\in k[X]$, and let $K$ be the minimal field over which all the $7$-isogenies of $E$ are defined.
The fields $K$ and $F$ coincide.
\end{proposition}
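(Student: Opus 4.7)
The plan is to prove the two inclusions $F\subseteq K$ and $K\subseteq F$ separately. The inclusion $F\subseteq K$ is immediate from the definition of $K$: every cyclic subgroup $C\subset E[7]$ of order $7$ is defined over~$K$, hence so are the quotient $E/C$ and its $j$-invariant $j(E/C)$, showing that every root of $\Phi_7(j(E),Y)$ lies in~$K$ and therefore $F\subseteq K$.

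For the reverse inclusion, by Proposition~\ref{proposition: modularK}~(iii) it suffices to show that any $\sigma\in G_F$ acts trivially on the set $\mathbb{P}(E[7])$ of cyclic subgroups of order~$7$. Suppose for contradiction that $\sigma(C)\ne C$ for some such~$C$. Since $\sigma$ fixes the element $j(E/C)\in F$, we have $j(E/\sigma(C))=j(E/C)$, and so there exists a $\overline k$-isomorphism $\phi\colon E/C\xrightarrow{\sim} E/\sigma(C)$. Composing yields an endomorphism
\[
\alpha \coloneqq \widehat{\pi}_{\sigma(C)}\circ\phi\circ\pi_C \in \End(E_{\overline k})
\]
of degree~$49$, where $\pi_C$ and $\pi_{\sigma(C)}$ are the quotient isogenies and $\widehat{\pi}_{\sigma(C)}$ is the dual. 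If $\alpha=\pm 7$, then $\widehat{\pi}_{\sigma(C)}\circ\phi\circ\pi_C=\pm\widehat{\pi}_{\sigma(C)}\circ\pi_{\sigma(C)}$; since a morphism of elliptic curves whose image lies in a finite subgroup is constant and fixes $0$, we deduce $\phi\circ\pi_C=\pm\pi_{\sigma(C)}$, whence $C=\ker(\phi\circ\pi_C)=\ker\pi_{\sigma(C)}=\sigma(C)$, contradicting the assumption.

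The main step is therefore to rule out the case that $\alpha$ is an exotic element of $\End(E_{\overline k})$ of degree~$49$, and this is where the hypothesis $j(E)\ne 0,1728$ enters. If $E$ has no CM then $\End(E_{\overline k})=\Z$ and only $\pm 7$ has degree~$49$, so we are already done. Otherwise $E$ has CM by an order $\mathcal{O}$ in an imaginary quadratic field $M$, and $j(E)\ne 0,1728$ forces $\mathcal{O}^\times=\{\pm 1\}$. Principal ideals of $\mathcal{O}$ of norm $49$ are $(7)$, together with $\mathfrak{p}^2$ and $\overline{\mathfrak{p}}^2$ in the split case $7=\mathfrak{p}\overline{\mathfrak{p}}$ when $\mathfrak{p}^2$ is principal; in the inert and ramified cases, as well as in the split case with $\mathfrak{p}^2$ non-principal, the only option is $\alpha=\pm 7$, already handled. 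In the remaining configuration, up to sign $\alpha=\beta$ or $\overline{\beta}$ with $(\beta)=\mathfrak{p}^2$. Using that $E[\mathfrak{p}^n]\cong\mathcal{O}/\mathfrak{p}^n$ as $\mathcal{O}$-modules and that additively $\mathcal{O}/\mathfrak{p}^2\cong \Z/49$ when $7$ splits, the kernel $\ker\alpha=E[\mathfrak{p}^2]$ is cyclic of order~$49$, so its unique subgroup of order~$7$ is $E[\mathfrak{p}]$. Since $C\subseteq\ker\alpha$ and $|C|=7$, we conclude that $C=E[\mathfrak{p}]$ (or $C=E[\overline{\mathfrak{p}}]$ in the conjugate case).

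To finish: every cyclic subgroup of order $7$ that $\sigma$ moves lies in the two-element set $\{E[\mathfrak{p}],E[\overline{\mathfrak{p}}]\}$, so $\sigma$ fixes at least $6$ of the $8$ points of $\mathbb{P}(E[7])$. Because $\PGL_2(\F_7)$ acts sharply $3$-transitively on $\mathbb{P}^1(\F_7)$, any nontrivial element fixes at most $2$ points, so $\sigma$ must act trivially on $\mathbb{P}(E[7])$, contradicting the existence of a moved $C$. The only real obstacle is the split CM case, where one must rule out the possibility that $\sigma$ exchanges $E[\mathfrak{p}]$ and $E[\overline{\mathfrak{p}}]$ while fixing every $j(E/C)$; the endomorphism-and-kernel analysis above restricts this behavior to at most two lines, and then the sharp $3$-transitivity of $\PGL_2(\F_7)$ closes the argument.
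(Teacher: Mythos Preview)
Your proof is correct and follows essentially the same architecture as the paper's: the easy inclusion $F\subseteq K$, the degree-$49$ endomorphism argument to handle the non-CM case (this is exactly what the paper does), and the final appeal to the fact that a nontrivial element of $\PGL_2(\F_7)$ fixes at most two points of $\mathbb{P}^1(\F_7)$.

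The only substantive difference is in the CM step. The paper shows that at most one root of $\Phi_7(j(E),Y)$ can have multiplicity greater than one by invoking the structure of the $7$-isogeny volcano over $FM$ (horizontal edges occur only on the surface, which is regular of degree at most~$2$). You instead classify elements of norm~$49$ in $\mathcal{O}=\End(E_{\overline k})$ directly: apart from $\pm 7$, they lie in $\{\pm\beta,\pm\overline\beta\}$ with $(\beta)_{\mathcal{O}_M}=\mathfrak{p}^2$, and each such $\alpha$ has cyclic kernel, so the moved subgroup $C$ must be one of at most two order-$7$ kernels. Both routes yield the same intermediate conclusion that $\sigma$ fixes at least six of the eight lines. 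Your argument is more self-contained (no volcano machinery), while the paper's is more uniform across all orders. One small point: your invocation of $E[\mathfrak{p}^n]\cong\mathcal{O}/\mathfrak{p}^n$ is cleanest for the maximal order; for arbitrary $\mathcal{O}$ it is safer to argue, as you implicitly can, that $\ker\alpha\ne E[7]$ forces $\ker\alpha\cong\Z/49\Z$ directly (since $\ker\alpha=E[7]$ would give $\alpha=\gamma\circ[7]$ with $\gamma\in\Aut(E_{\overline k})=\{\pm 1\}$), and then count that there are at most two choices of $\alpha$ up to sign. With that tweak the argument is watertight in all cases.
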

\begin{proof}
Let $S$ be the multiset of roots of $\Phi_7(j(E),Y)$ in $\Qbar$, viewed as a $G_k$-set in which the action of $\sigma\in G_k$ preserves multiplicities: we have $m(\sigma(r))=m(r)$ for all $\sigma\in G_k$, where $m(r)$ denotes the multiplicity of $r$ in $S$.
Let $\mathbb{P}(E[7])$ be the $G_k$-set  of cyclic subgroups $\langle P\rangle$ of $E[7]$ of order~$7$.
In characteristic zero every isogeny is separable, hence determined by its kernel up to composition with automorphisms; this yields
a surjective morphism of $G_k$-sets $\varphi\colon \mathbb{P}(E[7])\to S$ defined by $\langle P\rangle \mapsto j(E/\langle P\rangle)$ with $m(r)=\#\varphi^{-1}(r)$ for all $r\in S$ (note $\#\mathbb{P}(E[7])=8=\sum_{r\in S}m(r)$).
The $G_k$-action on $S$ factors through the $G_k$-action on $\mathbb{P}(E[7])$, and we thus have group homomorphisms
\[
G_k\overset{\bar\varrho_{E,7}}{\longrightarrow} \Aut(\mathbb{P}(E[7])\overset{\phi}{\longrightarrow} \Aut(S),
\]
where $\phi\colon \bar\varrho_{E,7}(G_k)\to \Aut(S)$ is defined by $\phi(\sigma)(\varphi(\langle P\rangle)\coloneqq\varphi(\sigma(\langle P\rangle))$ for each $\sigma\in \bar\varrho_{E,7}(G_k)$.
We then have $K=\Qbar^{\ker\bar\varrho_{E,7}}$ and $F=\Qbar^{\ker(\phi\circ\bar\varrho_{E,7})}$, so $F\subseteq K$.

If $E$ does not have complex multiplication, then $m(r)=1$ for all $r\in S$, since otherwise over $\Qbar$ we would have two 7-isogenies $\alpha,\beta\colon E_{\Qbar}\to E'$ with distinct kernels, and then $(\hat \alpha\circ\beta)\in \End(E_{\Qbar})$ is an endomorphism of degree 49 which is not $\pm 7$, contradicting $\End(E_{\Qbar})\simeq\Z$.
It follows that $\varphi$ and therefore $\phi$ is injective, so  $\ker\bar\varrho_{E,7}=\ker(\phi\circ\bar\varrho_{E,7})$ and $K=F$.

If $E$ does have complex multiplication, then $\End(E_{\Qbar})$ is isomorphic to an order in an imaginary quadratic field $M$.
We now consider the isogeny graph whose vertices are $j$-invariants of elliptic curves $E'/FM$ with edges $(j_1,j_2)$ present with multiplicity equal to the multiplicity of $j_2$ as a root of $\Phi_7(j_1,Y)$.  Since $j(E)\ne 0,1728$, the component of $j(E_{FM})$ in this graph is an isogeny volcano, as defined in \cite{Sut13}.
In particular, there are at least 6 distinct edges $(j(E_{FM}),j_2)$ (edges with multiplicity greater than~$1$ can occur only at the surface of an isogeny volcano and the subgraph on the surface is regular of degree at most $2$).  It follows that $m(r)>1$ for at most one $r\in S$.

The image of $\bar\varrho_{E,7}$ is isomorphic to a subgroup of $\PGL_2(\F_7)$, and this implies that if $\bar\varrho_{E,7}(\sigma)$ fixes more than $2$ elements of $\mathbb{P}(E[7])$ then $\sigma\in \ker\bar\varrho_{E,7}$.  This necessarily applies whenever $\bar\varrho_{E,7}(\sigma)$ lies in $\ker\phi$, since it must fix 6 elements, thus $\ker\bar\varrho_{E,7}=\ker(\phi\circ\bar\varrho_{E,7})$ and $K=F$.
\end{proof}

\begin{corollary}
Let $E$ be an elliptic curve over a number field $k$ with $j(E)\ne 0,1728$.  The minimal field $K$ over which all the $7$-isogenies of $E$ are defined depends only on $j(E)$.
\end{corollary}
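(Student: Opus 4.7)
The plan is to derive this corollary as an immediate consequence of Proposition~\ref{proposition: phi7}, since that proposition explicitly characterizes $K$ as the splitting field of a polynomial whose coefficients depend only on $j(E)$. I would begin by invoking Proposition~\ref{proposition: phi7} to identify $K$ with the splitting field $F$ inside $\overline{\Q}$ of the specialized polynomial $\Phi_7(j(E),Y)\in k[Y]$. This reduces the problem to showing that $F$ depends only on $j(E)$.

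Next, I would observe that $\Phi_7(X,Y)\in\Z[X,Y]$ is the classical modular polynomial of level $7$, a fixed bivariate polynomial that does not depend on the elliptic curve $E$ in any way. Consequently, the one-variable polynomial $\Phi_7(j(E),Y)$ is determined entirely by the value $j(E)\in k$. Its set of roots in $\overline{\Q}$---the $j$-invariants of the eight elliptic curves $7$-isogenous to $E_{\overline{\Q}}$---therefore depends only on $j(E)$, and hence so does the splitting field $F=k(j_1,\dots,j_8)$. Accordingly, if $E$ and $E'$ are two elliptic curves defined over $k$ with $j(E)=j(E')\neq 0,1728$, then $\Phi_7(j(E),Y)=\Phi_7(j(E'),Y)$ as elements of $k[Y]$, so the associated fields $K$ and $K'$ coincide, proving the claim.

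The only point requiring attention is the hypothesis $j(E)\neq 0,1728$, which is simply inherited from Proposition~\ref{proposition: phi7} and excludes the two CM $j$-invariants at which the multiplicity structure of the roots of $\Phi_7(j(E),Y)$ could fail to correspond bijectively to $\mathbb{P}(E[7])$ due to extra automorphisms of $E$. Since this case is already excluded in the proposition on which we rely, no additional obstacle arises, and the proof is essentially a one-line deduction.
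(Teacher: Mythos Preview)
Your proof is correct and is exactly the intended deduction: the paper states this corollary immediately after Proposition~\ref{proposition: phi7} without a separate proof, since $K=F$ and $F$ is the splitting field over $k$ of $\Phi_7(j(E),Y)$, which visibly depends only on $j(E)$ once $k$ is fixed. Your remark on why the hypothesis $j(E)\ne 0,1728$ is needed is slightly imprecise---the real issue, as the paper's subsequent Remark indicates, is that for those $j$-values one only has $F\subseteq K$ with possible strict inclusion, so $K$ can genuinely vary among twists---but this does not affect the argument.
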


\begin{remark}
The first part of the proof of Proposition~\ref{proposition: phi7} also applies when $j(E)=0,1728$, thus we always have $F\subseteq K$.  Equality does not hold in general, but a direct computation finds that $[K\!:\!F]$ must divide $6$ (resp.\ $2$) when $j(E)=0$ (resp.\ $1728$), and this occurs when $k=\Q$.
\end{remark}

We now fix $E$ as the elliptic curve $y^{2}=x^{3}+6x+7$ with Cremona label~\href{http://www.lmfdb.org/EllipticCurve/Q/144b1}{\texttt{144b1}}.
Note that $\varrho_{E,7}$ is surjective; this can be seen in the entry for this curve in the L-functions and Modular Forms Database \cite{LMFDB} and was determined by the algorithm in \cite{Sut16}.
It follows that $\Gal(\Q(E[7])/\Q)\simeq \GL_2(\F_7)$, and Proposition~\ref{proposition: modularK} implies that $H:=\Gal(K/\Q)\simeq \PGL_2(\F_7)$.

For our chosen curve $E$ we have $a=6$ and $b=7$.  Plugging these values into equation \eqref{eq: XE7} and applying the algorithm of \cite{Sut12} to simplify the result yields the curve listed in entry \#14 of Table~\ref{table: example curves Klein} for $H=G_{C_7^0}$.
To determine the field $K$, we apply Proposition~\ref{proposition: phi7}. 
Plugging $j(E)=48384$ into $\Phi_7(x,j(E))$ and using PARI/GP to simplify the resulting polynomial, we find that $K$ is the splitting field of the polynomial $\href{http://www.lmfdb.org/NumberField/8.2.153692888832.1}{x^8 + 4x^7 + 21x^4 + 18x + 9}$.

We applied the same procedure to obtain the equations for $C$ and the polynomials defining $K$ that are listed in rows \#4, \#6, \#9, \#10 of Table~\ref{table: example curves Klein} using the elliptic curves $E$ with Cremona labels \href{http://www.lmfdb.org/EllipticCurve/Q/2450ba1}{\texttt{2450ba1}}, \href{http://www.lmfdb.org/EllipticCurve/Q/64a4}{\texttt{64a4}}, \href{http://www.lmfdb.org/EllipticCurve/Q/784h1}{\texttt{784h1}}, \href{http://www.lmfdb.org/EllipticCurve/Q/36a1}{\texttt{36a1}}, respectively with appropriate adjustments for the cases with $j(E)=0,1728$ as indicated in Proposition~\ref{proposition: phi7}.  The curve in row \#11 is a base change of the curve in row \#6, and for the remaining 7 curves we used the parameterizations in \cite{Lor18}.

\subsection{Numerical computations}\label{section: computations}

In the previous sections we have described the explicit computation of several quantities related to twists $C$ of $C^0=C^0_d$, where $C^0_d$ is our fixed model over $\Q$ for the Fermat quartic ($d=1$) or the Klein quartic ($d=7$), with $\Jac(C^0)\sim (E^0)^3$, where $E^0=E^0_d$ is an elliptic curve over $\Q$ with CM by $M=\Q(\sqrt{-d})$ defined in \eqref{eq:E0}.  These include:
\begin{itemize}
\item Explicit equations for twists $C$ of $C^0$ corresponding to subgroups $H$ of $G_{C^0}$;
\item Defining polynomials for the minimal field $K$ for which $\End(\Jac(C)_K)=\End(\Jac(C)_\Qbar)$;
\item Independent and joint coefficient moments of the Sato-Tate groups $\ST(\Jac(C))$.
\end{itemize}
These computations are numerous and lengthy, leaving many opportunities for errors, both by human and machine.
We performed several numerical tests to verify our computations.

\subsubsection{\emph{\textbf{Na\"ive point-counting}}}
A simple but effective way to test the compatibility of a twist $C/k$ and endomorphism field $K$ is to verify that for the first several degree one primes $\p$ of $k$ of good reduction for $C$ that split completely in $K$, the reduction of $\Jac(C)$ modulo $\p$ is isogenous to the cube of the reduction of $E^0$ modulo the prime $p\coloneqq N(\p)$.
By a theorem of Tate, it suffices to check that $L_\p(\Jac(C),T)=L_p(E^0,T)^3$.
For this task we used optimized brute force point-counting methods adapted from \cite[\S 3]{KS08}.
The $L$-polynomial $L_\p(\Jac(C),T)$ is the numerator of the zeta function of $C$, a genus 3 curve, so it suffices to compute $\#C(\F_p)$, $\#C(\F_{p^2})$, $\#C(\F_{p^3})$, reducing the problem to counting points on smooth plane quartics and elliptic curves over finite~fields.

To count projective points $(x:y:z)$ on a smooth plane quartic $f(x,y,z)=0$ over a finite field~$\F_q$, one first counts affine points $(x:y:1)$ by iterating over $a\in \F_q$, computing the number~$r$ of distinct $\F_q$-rational roots of $g_a(x)\coloneqq f(x,a,1)$ via $r=\deg(\gcd(x^q-x,g_a(x)))$, and then determining the multiplicity of each rational root by determining the least $n\ge 1$ for which $\gcd(g_a(x),g^{(n)}_a(x))=1$, where $g^{(n)}_a$ denotes the $n$th derivative of $g_a\in \F_q[x]$; note that to compute $\gcd(x^q-x,g_a(x))$ one first computes $x^q\bmod g_a(x)$ using a square-and-multiply algorithm.
Having counted affine points $(x:y:1)$ one then counts the $\F_q$-rational roots of $f(x,1,0)$ and finally checks whether $(1:0:0)$ is a point on the curve.

To optimize this procedure one first seeks a linear transformation of $f(x,y,z)$ that ensures $g_a(x)=f(x,a,1)$ has degree at most 3 for all $a\in \F_q$; for this it suffices to translate a rational point to $(1:0:0)$, which is always possible for $q\ge 37$ (by the Weil bounds).
This yields an $O(p^3(\log p)^{2+o(1)})$-time algorithm to compute $L_\p(\Jac(C),T)$ that is quite practical for~$p$ up to $2^{12}$, enough to find several (possibly hundreds) of degree one
primes $\p$ of $k$ that split completely in $K$.

Having computed $L_\p(\Jac(C),T)$, one compares this to $L_p(E^0,T)^3$; note that the polynomial $L_p(E^0,T)=pT^2-a_pT+1$ is easily computed via $a_p=p+1-\#E^0(\F_p)$.
If this comparison fails, then either $C$ is not a twist of $C^0$, or not all of the endomorphisms of $\Jac(C)_\Qbar$ are defined over~$K$.  The converse is of course false, but if this comparison succeeds for many degree-1 primes $\p$ it gives one a high degree of confidence in the computations of $C$ and $K$.\footnote{The objective of this test is not to prove anything, it is simply a mechanism for catching mistakes, of which we found several; most were our own,  but some were due to minor errors in the literature, and at least one was caused by a defect in one of the computer algebra systems we used.}  Note that this test will succeed even when $K$ is not minimal, but we also check that $H\simeq \Gal(K/k)$, which means that so long as~$C$ is a twist of $C^0$  corresponding to the subgroup $H\subseteq G_{C_0}$, the field $K$ must be minimal.

\subsubsection{\emph{\textbf{An average polynomial-time algorithm}}}
In order to numerically test our computations of the the Sato-Tate groups $\ST(C)$, and to verify our computation of the coefficient moments, we also computed Sato-Tate statistics for all of our twists $C/k$ of the Fermat and Klein quartics.  This requires computing the $L$-polynomials $L_\p(\Jac(C),T)$ at primes $\p$ of good reduction for $C$ up to some bound $N$, and it suffices to consider only primes $\p$ of prime norm $p=N(\p)$, since nearly all the primes of norm less than $N$ are degree-1 primes.
In order to get statistics that are close to the values predicted by the Sato-Tate group one needs $N$ to be fairly large.  We used $N=2^{26}$, which is far too large for the naive $O(p^3(\log p)^{2+o(1)})$-time algorithm described above to be practical, even for a single prime $p\approx N$, let alone all good $p\le N$.

In \cite{HS}, Harvey and Sutherland give an average polynomial-time algorithm to count points on smooth plane quartics over $\Q$ that allows one to compute $L_p(\Jac(C),T)\bmod p$ for all good primes $p\le N$ in time $O(N(\log N)^{3+o(1)})$, which represents an average cost of $O((\log p)^{4+o(1)})$ per prime $p\le N$.
This is achieved by computing the Hasse-Witt matrices of the reductions of $C$ modulo $p$ using a generalization of the approach given in \cite{HS14,HS16} for hyperelliptic curves.
In \cite{HS} they also give an $O(\sqrt{p}(\log p)^{1+o(1)})$-time algorithm to compute $L_p(\Jac(C),T)\bmod p$ for a single good prime $p$, which allows one to handle reductions of smooth plane quartics $C$ defined over number fields at degree one primes; this increases the total running time for $p\le N$ to $O(N^{3/2+o(1)})$, which is still feasible with $N=2^{26}$.

Having computed $L_p(\Jac(C),T)\bmod p$, we need to lift this polynomial for $(\Z/p\Z)[T]$ to $\Z[T]$, which is facilitated by Proposition~\ref{proposition: trthetaMEA} below.
It follows from the Weil bounds that the linear coefficient of $L_p(\Jac(C),T)$ is an integer of absolute value at most $6\sqrt{p}$.
For $p>144$ the value of this integer is uniquely determined by its value modulo $p$, and for $p<144$ we can apply the naive approach described above.
This uniquely determines the value in the column labeled $F_1(x)$ in Tables~\ref{table: Coefficient transformations Fermat} and~\ref{table: Coefficient transformations Klein} of Proposition~\ref{proposition: trthetaMEA} below, which then determines the values in columns $F_2(x)$ and $F_3(s)$, allowing the polynomial $L_p(\Jac(C),T)\in \Z[T]$ to be completely determined.

\begin{proposition}\label{proposition: trthetaMEA}  For $\tau$ in $\Gal(L/k)$, let $s=s(\tau)$ and $t=t(\tau)$ denote the orders of $\tau$ and the projection of~$\tau$ on $\Gal(kM/k)$, respectively. For $C$ a twist of $C^0$, the following hold:
\begin{enumerate}[\rm (i)]
\item The pair $(s,t)$ is one of the $9$ pairs listed on Table \ref{table: Coefficient transformations Fermat} if $C^0=C^0_1$, and one of the $9$ pairs listed on Table  \ref{table: Coefficient transformations Klein} if $C^0=C^0_7$.
\item For each pair $(s,t)$,  let
$$
F_{(s,t)}:=F_{1}\times F_{2}\times F_{3}\colon [-2,2]\rightarrow [-6,6]\times [-1,15]\times [-20,20]\subseteq \R^3
$$
be the map defined in Table \ref{table: Coefficient transformations Fermat} if $C_0=C_0^1$ or Table  \ref{table: Coefficient transformations Klein} if $C_0=C_0^7$.
For every prime $\p$ unramified in $K$ of good reduction for both $\Jac(C)$ and $E^0$ we have
\begin{equation}\label{equation: functions}
F_{(f_L(\p),f_{kM}(\p))}(a_1(E^0)(\p))=\bigl( a_1(\Jac(C))( \p), a_2(\Jac(C))(\p), a_3(\Jac(C))(\p) \bigr),
\end{equation}
 where $f_L(\p)$ (resp.\ $f_{kM}(\p)$) is the residue degree of $\p$ in $L$ (resp.\ $kM$).
\end{enumerate}

\begin{table}[htb]
\setlength{\extrarowheight}{1.5pt}
\begin{center}
\caption{For each possible pair $(s,t)$, the corresponding values of $F_{(s,t)}(x)$ if $C^0=C^0_1$. In the table below, $y$ denotes $\pm\sqrt{4-x^2}$.}
\label{table: Coefficient transformations Fermat}

\begin{tabular}{rrrr}
$(s,t)$ & $F_{1}(x)$ & $F_{2}(x)$ & $F_{3}(x)$ \\\toprule
$(1,1)$ & \begin{tabular}{r} $3x$ \end{tabular}     & \begin{tabular}{r}$3x^2+3$          \end{tabular} & \begin{tabular}{r}$x^3+6x$ \end{tabular} \\

$(2,1)$& or $\begin{cases} \\  \end{cases}$ \begin{tabular}{r} $-x$\\ $x$\end{tabular}                 & \begin{tabular}{r} $-x^2+3$\\ $-x^2+3$ \end{tabular}   & \begin{tabular}{r} $x^3-2x$ \\ $-x^3+2x$ \end{tabular}  \\

$(3,1)$   & \begin{tabular}{r} $0$ \end{tabular}     & \begin{tabular}{r}$0$          \end{tabular} & \begin{tabular}{r}$x^3-3x$ \end{tabular} \\

$(4,1)$ &or $\begin{cases} \\ \\  \end{cases} $ \begin{tabular}{r} $-x+2y $\\ $x$\\$-x$\end{tabular}                 & \begin{tabular}{r} $-x^2+7-2xy$ \\ $x^2-1$\\ $x^2-1$ \end{tabular}   & \begin{tabular}{r} $x^3-6x+4y$ \\ $x^3-2x$\\ $-x^3+2x$ \end{tabular}  \\

$(8,1)$   &  \begin{tabular}{r}  $y$\end{tabular}     & \begin{tabular}{r} $-xy+1$          \end{tabular} & \begin{tabular}{r} $x^3-4x$ \end{tabular} \\

$(2,2)$ & \begin{tabular}{r}$0$ \end{tabular}  &  \begin{tabular}{r}$3$ \end{tabular} & \begin{tabular}{r}$0$ \end{tabular} \\

$(4,2)$ & \begin{tabular}{r}$0$ \end{tabular}  &\begin{tabular}{r} $-1$ \end{tabular}  & \begin{tabular}{r}$0$ \end{tabular} \\

$(6,2)$ &\begin{tabular}{r} $0$ \end{tabular}  & \begin{tabular}{r}$0$ \end{tabular}  & \begin{tabular}{r}$0$  \end{tabular}\\

$(8,2)$ & \begin{tabular}{r}$0$ \end{tabular}  & \begin{tabular}{r}$1$ \end{tabular}  & \begin{tabular}{r}$0$ \end{tabular} \\\bottomrule
\end{tabular}
\end{center}
\end{table}

\begin{table}[htb]
\setlength{\extrarowheight}{1.5pt}
\begin{center}
\caption{For each possible pair $(s,t)$ the corresponding values of $F_{(s,t)}(x)$ if $C^0=C^0_7$. In the table below, $y$ denotes $\pm\sqrt{(4-x^2)/7}$.}
\label{table: Coefficient transformations Klein}
\begin{tabular}{rrrr}
$(s,t)$ & $F_{1}(x)$ & $F_{2}(x)$ & $F_{3}(x)$ \\\toprule
$(1,1)$ & \begin{tabular}{r} $3x$ \end{tabular}     & \begin{tabular}{r}$3x^2+3$          \end{tabular} & \begin{tabular}{r}$x^3+6x$ \end{tabular} \\

$(2,1)$&  \begin{tabular}{r} $-x$\end{tabular}     & \begin{tabular}{r} $-x^2+3$ \end{tabular}   & \begin{tabular}{r} $x^3-2x$  \end{tabular}  \\

$(3,1)$   & \begin{tabular}{r} $0$ \end{tabular}     & \begin{tabular}{r}$0$          \end{tabular} & \begin{tabular}{r}$x^3-3x$ \end{tabular} \\

$(4,1)$   & \begin{tabular}{r} $x$ \end{tabular}     & \begin{tabular}{r}$x^2-1$    \end{tabular} & \begin{tabular}{r}$x^3-2x$ \end{tabular} \\

$(7,1)$ & \begin{tabular}{r} $-\frac{x}{2}+ \frac{ 7}{2}y$\end{tabular}                 & \begin{tabular}{r} $-\frac{x^2}{2}+3-\frac{ 7}{2}xy$ \end{tabular}   & \begin{tabular}{r} $x^3-\frac{9}{2}x+\frac{7}{2}y$ \end{tabular}  \\

$(2,2)$ & \begin{tabular}{r}$0$ \end{tabular}  &  \begin{tabular}{r}$3$ \end{tabular} & \begin{tabular}{r}$0$ \end{tabular} \\

$(4,2)$ & \begin{tabular}{r}$0$ \end{tabular}  &\begin{tabular}{r} $-1$ \end{tabular}  & \begin{tabular}{r}$0$ \end{tabular} \\

$(6,2)$ &\begin{tabular}{r} $0$ \end{tabular}  & \begin{tabular}{r}$0$ \end{tabular}  & \begin{tabular}{r}$0$  \end{tabular}\\

$(8,2)$ & \begin{tabular}{r}$0$ \end{tabular}  & \begin{tabular}{r}$1$ \end{tabular}  & \begin{tabular}{r}$0$ \end{tabular} \\\bottomrule
\end{tabular}
\end{center}
\end{table}
\end{proposition}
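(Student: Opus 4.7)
The approach is to separate the analysis into the cases $t=1$ and $t=2$. In both cases, the action of $\Frob_\p$ on $V_\ell(\Jac(C))$ is controlled by the isomorphism~\eqref{equation:flip}, the identification~\eqref{equation: iso theta}, and Lemma~\ref{lemma: twisting rep}, which together reduce the computation of $a_i(\Jac(C))(\p)$ to character-theoretic data on $\Aut(C^0_M)$. For (i), the possible values of $s=\ord(\tau)$ are, for $t=1$, the element orders of $\lambda_\phi(\Gal(L/kM))\subseteq\Aut(C^0_M)$, and for $t=2$, the element orders of elements of $G_{C^0}\setminus G_0$. Using~\eqref{equation: twist groups} and noting that $G_{C^0_7}\simeq\PGL_2(\F_7)$ while $G_0\simeq\PSL_2(\F_7)$ has orders $\{1,2,3,4,7\}$, a direct enumeration yields $\{1,2,3,4,8\}$ and $\{2,4,6,8\}$ in the Fermat case and $\{1,2,3,4,7\}$ and $\{2,4,6,8\}$ in the Klein case, matching the nine pairs in each table (in particular, $12$ does not arise as an order in either $G_{C^0}\setminus G_0$).

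For (ii) with $t=1$, I would apply~\eqref{equation: icoeffs} directly. Setting $x\coloneqq a_1(E^0)(\p)$, equation~\eqref{equation: alpha} gives $\alpha_1(\p)+\bar\alpha_1(\p)=-x$ and $\alpha_1(\p)\bar\alpha_1(\p)=1$, hence $\alpha_1(\p)-\bar\alpha_1(\p)=\pm i\sqrt{4-x^2}=\pm iy$ with $y\coloneqq\pm\sqrt{4-x^2}$. Writing $a_j(\theta)(\tau)=c_j+id_j$, the formulas of~\eqref{equation: icoeffs} expand into polynomial expressions in $x$ and $y$ with a consistent choice of sign of $y$; for instance $a_1(\Jac(C))(\p)=-c_1x\mp d_1y$. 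By Lemma~\ref{lemma: twisting rep} and~\eqref{equation: iso theta}, the values $a_j(\theta)(\tau)$ on conjugacy classes of $\Aut(C^0_M)$ of order $s$ can be read off Tables~\ref{table: charactertable1} and~\ref{table: charactertable7} using $\chi_8$ (Fermat) or $\chi_3$ (Klein) together with the companion invariants $\chi_{\wedge^2\theta}$ and $\chi_{\det\theta}$. Substituting into the expanded formulas gives the tabulated entries. The multiple sub-cases at $(2,1)$ and $(4,1)$ in the Fermat case correspond to distinct conjugacy classes of the same order in $\Aut(C^0_1)$ having different values of $\chi_8$; the $y$-dependence in one sub-case of $(4,1)$ signals a class whose $\chi_8$-value is non-real.

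For (ii) with $t=2$, since $\tau=\Frob_\p\in G_k\setminus G_{kM}$ restricts non-trivially to $M$, conjugation by $\tau$ swaps $\sigma$ and $\bar\sigma$, so $\tau$ interchanges the summands $V_\sigma(\Jac(C))$ and $V_{\bar\sigma}(\Jac(C))$ of the $G_{kM}$-module decomposition of $V_\ell(\Jac(C))$. In a basis compatible with this decomposition, $\varrho_{\Jac(C)}(\tau)$ has block anti-diagonal shape, so its eigenvalues come in $\pm$-pairs and $\overline L_\p(\Jac(C),T)\in\R[T^2]$; this immediately gives $a_1(\Jac(C))(\p)=a_3(\Jac(C))(\p)=0$ unconditionally for all unramified good-reduction primes with $t(\Frob_\p)=2$, bypassing the genericity hypotheses used in the proof of Corollary~\ref{corollary: import}. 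A short local computation (any rational prime below an inert prime of $kM/k$ is itself inert in $M$, since otherwise $-d$ would already be a square in the residue field of $\p$) shows that $E^0$ has supersingular reduction at $\p$, so $\overline L_\p(E^0,T)=1+T^2$. The divisibility $\overline L_\p(\Jac(C),T)\mid \overline L_\p(E^0,\theta_\Q,T)$ then forces the roots of $\overline L_\p(\Jac(C),T)$ to be roots of unity, whence $\overline L_\p(\Jac(C),T)\in\Z[T]$ is a palindromic product of cyclotomic factors of the form $P_a(T)$ from~\eqref{equation: polout}. The order analysis at the end of the proof of Corollary~\ref{corollary: import}, based on~\eqref{equation: supersingular factors} and the order constraint on roots of $D(T,\tau)=\det(1-\theta_\Q(\tau)T)$, then forces $a=3,-1,0,1$ for $s=2,4,6,8$, respectively, matching the tables.

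The main obstacle is the case-by-case bookkeeping in the $t=1$ analysis: for each conjugacy class of a given order in $\Aut(C^0_M)$ we must extract the three character values $a_1(\theta),a_2(\theta),a_3(\theta)$, track the sign of $y$, and simplify the resulting polynomial expressions. Each individual case is routine, but the combinatorics of enumerating all required conjugacy classes (especially in the Fermat case, where $\Aut(C^0_1)$ has several distinct conjugacy classes of orders $2$ and $4$) and matching the outputs to the table entries requires care.
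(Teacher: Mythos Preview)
Your proposal is correct and, for the case $t=1$, follows exactly the paper's approach: the paper's proof simply records the three formulas
\begin{align*}
a_1(\Jac(C))(\p) & = -\Rr(a_1(\p))\,x_\p+\Ii( a_1(\p))\,y_\p\,,\\
a_2(\Jac(C))(\p) & =\Rr(a_2(\p))(x_\p^2-2)- \Ii(a_2(\p))\,x_\p y_\p+|a_1(\p)|^2\,,\\
a_3(\Jac(C))(\p) & = -a_3(\p)(x_\p^3-3x_\p)-\Rr(\overline a_1(\p)a_2(\p))\,x_\p+ \Ii(\overline a_1(\p)a_2(\p))\,y_\p
\end{align*}
derived from~\eqref{equation: icoeffs} and leaves the conjugacy-class-by-conjugacy-class substitution from Tables~\ref{table: charactertable1} and~\ref{table: charactertable7} to the reader. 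Your treatment of part~(i) and of the $t=2$ rows is considerably more explicit than the paper's, which says only that the assertion ``can easily be derived.''

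For $t=2$ you take a genuinely different route to $a_1=a_3=0$. The paper implicitly falls back on the rationality argument in the proof of Corollary~\ref{corollary: import}, which imposes the auxiliary conditions (a)--(c) and therefore, strictly speaking, only treats a density-one set of inert primes. Your block anti-diagonal argument---$\tau\notin G_{kM}$ interchanges $V_\sigma(\Jac(C))$ and $V_{\bar\sigma}(\Jac(C))$, so the Frobenius matrix is block anti-diagonal and its characteristic polynomial lies in $\R[T^2]$---is cleaner and covers \emph{every} good unramified prime with $t=2$, which is what the statement of Proposition~\ref{proposition: trthetaMEA} actually requires. One point worth tightening: the swapping claim needs the $M$-action on $V_\ell(\Jac(C))$ to commute with $G_{kM}$ and to be sent to its complex conjugate by any $\tau\in G_k\setminus G_{kM}$. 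This holds because the image of $M$ is the center of $\End(\Jac(C)_L)\otimes\Q\simeq M_3(M)$, hence intrinsically Galois-stable, and the induced $G_k$-action on that center factors through $\Gal(kM/k)$ (the CM of $E^0$ being defined over $kM$). With that in hand, your determination of $a_2$ via the order analysis of~\eqref{equation: supersingular factors} coincides with the paper's.
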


\begin{proof} For every prime $\p$ unramified in $K$ of good reduction for both $\Jac(C)$ and $E^0$, write $x_\p:=a_1(E^0)(\p)$ and $y_\p:=\pm\sqrt{4-x_\p^2}$. It follows from the proof of Proposition \ref{proposition: import} that
\begin{align*}
a_1(\Jac(C))(\p) & = -\Rr(a_1(\p))x_\p+\Ii( a_1(\p))y_\p\,,\\[4pt]
a_2(\Jac(C)(\p)) & =\Rr(a_2(\p))(x_\p^2-2)- \Ii(a_2(\p))x_\p y_\p+|a_1(\p)|^2\,,\\[4pt] 
a_3(\Jac(C)(\p)) & = -a_3(\p)(x_\p^3-3x_\p)-\Rr(\overline a_1(\p)a_2(\p))x_\p+ \Ii(\overline a_1(\p)a_2(\p))y_\p \,,
\end{align*}
from which one can easily derive the assertion of the proposition.
\end{proof}

Tables~\ref{table: Fermat statistics} and~\ref{table: Klein statistics} show Sato-Tate statistics for the Fermat and Klein twists $C/k$ and their base changes $C_{kM}$.
In each row we list moment statistics $\overline\M_{101}, \overline\M_{030}, \overline\M_{202}$ for the three moments $\M_{101}, \M_{030}, \M_{202}$ that uniquely determine the Sato-Tate group $\ST(C)$, by Proposition~\ref{proposition: lowernumb}.
These were computed by averaging over all good primes of degree one and norm $p\le 2^{26}$.

For comparison we also list the actual value of each moment, computed using the method described in \S\ref{sec: joint moments}.  In every case the moment statistics agree with the corresponding moments of the Sato-Tate groups to within 1.5 percent, and in almost all cases, to within 0.5 percent.

\section{Tables}\label{section: tables}

In this final section we present tables of characters, curves, Sato-Tate distributions, and moment statistics referred to elsewhere in this article.
Let us briefly describe their contents.

Table~\ref{table: charactertable} lists characters of the automorphism groups of the Fermat and Klein quartics specified via conjugacy class representatives expressed using the generators $s,t,u$ defined in \eqref{equation: autos1} and \eqref{equation: autos7}.

Tables \ref{table: example curves Fermat}--\ref{table: example curves Klein} list explicit curve equations for twists $C$ of the Fermat and Klein quartics corresponding to subgroups~$H$ of 
$G_{C^0}\coloneqq \Aut(C^0_{M})\rtimes \Gal(M/\Q)$, as described in Remark~\ref{remark: pairs}.
The group $H_0\coloneqq H\cap \Aut(C^0_{M})$ is specified in terms of the generators $s,t,u$ listed in~\eqref{equation: autos1} and \eqref{equation: autos7}.
When $kM=k$ we have $H=H_0$, and otherwise $H$ is specified by listing an element $h\in \Aut(C_M^0)$ for which $H=H_0\cup H_0\cdot(h,\tau)$, where $\Gal(M/\Q)=\langle\tau\rangle$; see \eqref{equation: descH}.
The isomorphism classes of~$H$ and $H_0$ are specified by GAP identifiers $\mathrm{ID}(H)$ and $\mathrm{ID}(H_0)$.
The minimal field $K$ over which all endomorphisms of $\Jac(C_\Qbar)$ are defined is given as an explicit extension of~$\Q$, or as the splitting field $\Gal(f(x))$ of a monic $f\in \Z[x]$.
In the last 2 columns of Tables~ \ref{table: example curves Fermat} and \ref{table: example curves Klein} we identify the Sato-Tate distributions of $\ST(C)$ and $\ST(C_{kM})$ by their row numbers in Table~\ref{table: distributions}.
Among twists with the same Sato-Tate group $\ST(C)$ (which is uniquely identified by its distribution), we list curves with isogenous Jacobians, per Remark~\ref{remark: isojac}.

In Table~\ref{table: distributions} we list the 60 Sato-Tate distributions that arise among twists of the Fermat and Klein quartics.
Each component group is identified by its GAP ID, and we list the joint moments $\M_{101}$, $\M_{030}$, $\M_{202}$ sufficient to uniquely determine the Sato-Tate distribution, along with the first two non-trivial independent coefficient moments for $a_1,a_2,a_3$.
We also list the proportion $z_{i,j}$ of components on which the coefficient $a_i$ takes the fixed integer value $j$; for $i=1,3$ we list only $z_1\coloneqq z_{1,0}$ and $z_3\coloneqq z_{3,0}$, and for $i=2$ we list the vector $z_2\coloneqq [z_{2,-1},z_{2,0},z_{2,1},z_{2,2},z_{2,3}]$; see Lemma~\ref{lemma: zvalues} for details.
There are 6 pairs of Sato-Tate distributions whose independent coefficient measures coincide; these pairs are identified by roman letters that appear in the last column.

Tables~\ref{table: Fermat statistics}--\ref{table: Klein statistics} list moment statistics for twists of the Fermat and Klein quartics computed over good primes $p\le 2^{26}$, along with the corresponding moment values.  Twists with isogenous Jacobians necessarily have the same moment statistics, so we list only one twist in each isogeny class.
\bigskip
\bigskip
\bigskip
\bigskip
\bigskip

\small
\begin{table}[htb!]
\setlength{\tabcolsep}{3pt}
\caption{Character tables of $\Aut(C^0_M)$.  See~\eqref{equation: autos1}~and~\eqref{equation: autos7} for a description of the generators $s,t,u$.}\label{table: charactertable}
\begin{subtable}{.5\linewidth}
\caption{$\Aut((C_1^0)_M)\simeq \langle 96,64\rangle$}
\label{table: charactertable1}
\centering
\begin{tabular}{lrrrrrrrrrr}
Class &  $1a$  & $2a$ & $2b$ &  $3a$ &  $4a$ & $4b$ &  $4c$ & $4d$ & $8a$ & $8b$\\\hline
Repr. &  $1$  & $u^2$ & $u^2t$ &  $s$ &  $u$ & $u^3$ &  $tut$ & $t$ & $tu$ & $tu^3$\\\hline
Order  & 1 &  2 & 2 & 3 &  4 & 4 &  4 & 4 & 8 & 8\\\hline
Size  & 1 &  3 & 12 & 32 &  3 & 3 &  6 & 12 & 12 & 12\\\hline
$\chi_1$   &   1 & 1 & 1 & 1 &  1&   1 & 1 & 1 & 1 & 1\\
$\chi_2$  & 1 & 1 & -1 & 1  &   1  &   1 & 1 &-1& -1& -1\\
$\chi_3$ &  2 & 2 & 0 &-1 &    2  &   2 & 2 & 0 & 0 & 0\\
$\chi_4$ &  3 & 3 & -1 & 0  &  -1  &  -1 &-1& -1 & 1 & 1\\
$\chi_5$  & 3 & 3 & 1 & 0  &  -1  &  -1 &-1 & 1 &-1 &-1\\
$\chi_6$  &  3 & -1 &  1&  0&$-1-2i$ &$-1+2i$ &  1 &-1 & $i$& $-i$\\
$\chi_7$  &   3 &-1 & 1 & 0&$-1+2i$& $-1-2i$ &  1 &-1& $-i$&  $i$\\
$\chi_8$   &   3 &-1 &-1 & 0 &$-1+2i$& $-1-2i$ &  1 & 1 & $i$ & $-i$\\
$\chi_9$  &   3 &-1 &-1 & 0 &$-1-2i$ & $-1+2i$ &  1 & 1 &$-i$ &  $i$\\
$\chi_{10}$ &   6 &-2 & 0 & 0  &   2   &  2 &-2 & 0 & 0  &0\\\hline
\end{tabular}
\end{subtable}
\hfill
\begin{subtable}{.4\linewidth}
\caption{$\Aut((C_7^0)_M)\simeq \langle 168,42\rangle$}
\label{table: charactertable7}
\centering
\begin{tabular}{lrrrrrr}
Class & $1a$ & $2a$ & $3a$ & $4a$  &  $7a$  &  $7b$\\\hline
Repr. & $1$ & $t$ & $s$ & $u^2tu^3tu^2$  &  $u$  &  $u^3$\\\hline
Order & 1 & 2 & 3 & 4  &  7  &  7\\\hline
Size  &   1 & 21 & 56 & 42 &  24 &  24\\\hline
$\chi_1$  &   1 & 1 & 1 & 1 &   1 &   1\\
$\chi_2$  &   3 & -1 &  0  & 1 &  $a$ &  $\overline a$\\
$\chi_3$  &  3 &-1 & 0 & 1 & $\overline a$ &  $a$\\
$\chi_4$  &   6  & 2 & 0 & 0  & -1 &  -1\\
$\chi_5$  &  7 & -1 & 1 &-1 &   0  &  0\\
$\chi_6$   & 8 & 0 &-1 & 0  &  1  &  1\\\hline
\end{tabular}
\end{subtable}
\end{table}
\normalsize

\begin{table}
\begin{center}
\small
\setlength{\extrarowheight}{2.8pt}
\caption{Twists of the Fermat quartic corresponding to subgroups $H \subseteq G_{C^0_1}$.
See \eqref{equation: autos1} for the definitions of $s,t,u$ and \eqref{equation: descH} for the definition of $h$.
We identify $\ST_k=\ST(C)$ and $\ST_{kM}=\ST(C_{kM})$ by row numbers in Table~\ref{table: distributions}; here $M=\Q(i)$.}\label{table: example curves Fermat}
\vspace{5pt}
\begin{tabular}{|r|c|c|c|c|c|l|c|c|}\hline
\# & Gen$(H_0)$ & $h$ & ID$(H)$ & ID$(H_0)$ & $k$ & $K$ & $\ST_k$ & $\ST_{kM}$\\\hline

1 & id & id & $\langle 2,1 \rangle$ & $\langle 1,1 \rangle$ & $\Q$ & $\Q(i)$ & 3 & 1\\\hline 
$\boldsymbol{C_1^0}$&\multicolumn{8}{l|}{$x^4+y^4+z^4$}\\\hline

2 & id & $u^2t$ & $\langle 2,1 \rangle$ & $\langle 1,1 \rangle$ & $\Q$ & $\Q(i)$ & 3 & 1\\\hline 
&\multicolumn{8}{l|}{$x^4-6x^2y^2+y^4-2z^4$}\\\hline

3 & id & $t^3utu$ & $\langle 2,1 \rangle$ & $\langle 1,1 \rangle$ & $\Q$ & $\Q(i)$ & 3 & 1\\\hline 
&\multicolumn{8}{l|}{$4x^4-y^4-z^4$}\\\hline

4 & $t^2$ & $t$ & $\langle 4,1 \rangle$ & $\langle 2,1 \rangle$ & $\Q(\sqrt{-5})$ & $\Gal(x^4\!-\!x^2\!-\!1)$ & 5 & 2\\\hline 
&\multicolumn{8}{l|}{$12x^4+40x^3y-100xy^3-75y^4-2z^4$}\\\hline

5 & $t^2$ & $t^3utu$ & $\langle 4,2 \rangle$ & $\langle 2,1 \rangle$ & $\Q$ & $\Q(\sqrt{3},i)$ & 9 & 2\\\hline 
&\multicolumn{8}{l|}{$9x^4+9y^4-4z^4$}\\\hline

6 & $t^2$ & $u^2t$ & $\langle 4,2 \rangle$ & $\langle 2,1 \rangle$ & $\Q$ & $\Q(\sqrt{3},i)$ & 9 & 2\\\hline 
&\multicolumn{8}{l|}{$9x^4-54x^2y^2+9y^4-2z^4$}\\\hline

7 & $t^2$ & id & $\langle 4,2 \rangle$ & $\langle 2,1 \rangle$ & $\Q$ & $\Q(\sqrt{3},i)$ & 9 & 2\\\hline 
&\multicolumn{8}{l|}{$9x^4+y^4+z^4$}\\\hline

8 & $t^2$ & $u$ & $\langle 4,2 \rangle$ & $\langle 2,1 \rangle$ & $\Q$ & $\Q(\sqrt{3},i)$ & 9 & 2\\\hline 
&\multicolumn{8}{l|}{$9x^4-4y^4+z^4$}\\\hline

9 & $u^2t$ & id & $\langle 4,2 \rangle$ & $\langle 2,1 \rangle$ & $\Q$ & $\Q(\sqrt{3},i)$ & 9 & 2\\\hline 
&\multicolumn{8}{l|}{$2x^4+36x^2y^2+18y^4+z^4$}\\\hline

10 & $u^2t$ & $t^3utu$ & $\langle 4,2 \rangle$ & $\langle 2,1 \rangle$ & $\Q$ & $\Q(\sqrt{3},i)$ & 9 & 2\\\hline 
&\multicolumn{8}{l|}{$9x^4+18x^2y^2+y^4-2z^4$}\\\hline

11 & $s$ & $u^2t$ & $\langle 6,1 \rangle$ & $\langle 3,1 \rangle$ & $\Q$ & $\Gal(x^3\!-\!3x\!-\!4)$ & 11 & 4\\\hline 
&\multicolumn{8}{l|}{$x^4+4x^3y+12x^2y^2-12x^2yz-6x^2z^2+36xyz^2+6y^4-36y^2z^2-12yz^3+9z^4$}\\\hline

12 & $s$ & id & $\langle 6,2 \rangle$ & $\langle 3,1 \rangle$ & $\Q$ & $\Gal(x^6\!+\!5x^4\!+\!6x^2\!+\!1)$ & 12 & 4\\\hline 
&\multicolumn{8}{l|}{$5x^4+8x^3y-4x^3z+6x^2y^2+12x^2z^2+12xyz^2+4xz^3+2y^4+4y^3z+6y^2z^2+4yz^3+2z^4$}\\\hline

13 & $t^3utu^3$ & $tu$ & $\langle 8,1 \rangle$ & $\langle 4,1 \rangle$ & $\Q(\sqrt{-5})$ & $\Gal(x^8\!-\!2x^4\!-\!4)$ & 14 & 6\\\hline 
&\multicolumn{8}{l|}{$x^4-10x^3z+30x^2z^2-2y^4-100z^4$}\\\hline

14 & $t$ & $t^3utu$ & $\langle 8,2 \rangle$ & $\langle 4,1 \rangle$ & $\Q$ & $\Gal(x^8\!+\!15x^4\!+\!25)$ & 16 & 6\\\hline 
&\multicolumn{8}{l|}{$3x^4-4x^3y+12x^2y^2+4xy^3+3y^4-5z^4$}\\\hline

15 & $t^3utu^3$ & $u^2t$ & $\langle 8,2 \rangle$ & $\langle 4,1 \rangle$ & $\Q$ & $\Gal(x^8\!+\!15x^4\!+\!25)$ & 16 & 6\\\hline 
&\multicolumn{8}{l|}{$12x^4+40x^3y-100xy^3-75y^4+10z^4$}\\\hline

16 & $t$ & id & $\langle 8,2 \rangle$ & $\langle 4,1 \rangle$ & $\Q$ & $\Gal(x^8\!+\!15x^4\!+\!25)$ & 16 & 6\\\hline 
&\multicolumn{8}{l|}{$3x^4+4x^3y+12x^2y^2-4xy^3+3y^4+20z^4$}\\\hline

17 & $u^2t,t^2$ & $t^3utu^3$ & $\langle 8,3 \rangle$ & $\langle 4,2 \rangle$ & $\Q$ & $\Gal(x^4\!-\!6x^2\!+\!10)$ & 19 & 8\\\hline 
&\multicolumn{8}{l|}{$11x^4+12x^3y+54x^2y^2-12xy^3+11y^4-2z^4$}\\\hline

18 & $t^2,u^2$ & $u^2t$ & $\langle 8,3 \rangle$ & $\langle 4,2 \rangle$ & $\Q$ & $\Gal(x^4\!-\!6x^2\!+\!10)$ & 19 & 8\\\hline 
&\multicolumn{8}{l|}{$x^4+5x^3y-25xy^3-25y^4+z^4$}\\\hline

19 & $u^2t,t^2$ & $u^2$ & $\langle 8,3 \rangle$ & $\langle 4,2 \rangle$ & $\Q$ & $\Gal(x^4\!-\!6x^2\!+\!10)$ & 19 & 8\\\hline 
&\multicolumn{8}{l|}{$19x^4-12x^3y+6x^2y^2+12xy^3+19y^4+2z^4$}\\\hline

20 & $t$ & $u^2$ & $\langle 8,3 \rangle$ & $\langle 4,1 \rangle$ & $\Q$ & $\Gal(x^4\!-\!6x^2\!+\!12)$ & 20 & 6\\\hline 
&\multicolumn{8}{l|}{$9x^4-18x^2y^2-12xy^3-2y^4+12z^4$}\\\hline

\end{tabular}
\end{center}
\end{table}

\begin{table}
{\textsc{Table 4}.  Twists of the Fermat quartic (page 2 of 3).}
\bigskip
\bigskip

\begin{center}
\small
\setlength{\extrarowheight}{3.3pt}
\begin{tabular}{|r|c|c|c|c|c|l|c|c|}\hline
\# & Gen$(H_0)$ & $h$ & ID$(H)$ & ID$(H_0)$ & $k$ & $K$ & $\ST_k$ & $\ST_{kM}$\\\hline

21 & $t$ & $t^3utu^3$ & $\langle 8,3 \rangle$ & $\langle 4,1 \rangle$ & $\Q$ & $\Gal(x^4\!-\!6x^2\!+\!12)$ & 20 & 6\\\hline 
&\multicolumn{8}{l|}{$9x^4-18x^2y^2-12xy^3-2y^4-3z^4$}\\\hline

22 & $t^3utu^3$ & $u$ & $\langle 8,3 \rangle$ & $\langle 4,1 \rangle$ & $\Q$ & $\Gal(x^4\!-\!6x^2\!+\!12)$ & 20 & 6\\\hline 
&\multicolumn{8}{l|}{$9x^4+3y^4-4z^4$}\\\hline

23 & $t^3utu^3$ & id & $\langle 8,3 \rangle$ & $\langle 4,1 \rangle$ & $\Q$ & $\Gal(x^4\!-\!6x^2\!+\!12)$ & 20 & 6\\\hline 
&\multicolumn{8}{l|}{$9x^4+3y^4+z^4$}\\\hline

24 & $t^3utu$ & $u^2t$ & $\langle 8,3 \rangle$ & $\langle 4,1 \rangle$ & $\Q$ & $\Gal(x^4\!-\!6x^2\!+\!12)$ & 21 & 7\\\hline 
&\multicolumn{8}{l|}{$x^4-6x^2y^2+y^4-6z^4$}\\\hline

25 & $t^3utu$ & $u$ & $\langle 8,3 \rangle$ & $\langle 4,1 \rangle$ & $\Q$ & $\Gal(x^4\!-\!6x^2\!+\!12)$ & 21 & 7\\\hline 
&\multicolumn{8}{l|}{$3x^4-4y^4+z^4$}\\\hline

26 & $t^3utu$ & id & $\langle 8,3 \rangle$ & $\langle 4,1 \rangle$ & $\Q$ & $\Gal(x^4\!-\!6x^2\!+\!12)$ & 21 & 7\\\hline 
&\multicolumn{8}{l|}{$3x^4+y^4+z^4$}\\\hline

27 & $t^3utu$ & $t$ & $\langle 8,4 \rangle$ & $\langle 4,1 \rangle$ & $\Q(\sqrt{-2})$ & $\Gal(x^8\!+\!9)$ & 23 & 7\\\hline 
&\multicolumn{8}{l|}{$3x^3y-3xy^3-2z^4$}\\\hline

28 & $t^2,u^2$ & id & $\langle 8,5 \rangle$ & $\langle 4,2 \rangle$ & $\Q$ & $\Q(\sqrt{3},\sqrt{5},i)$ & 24 & 8\\\hline 
&\multicolumn{8}{l|}{$9x^4+25y^4+z^4$}\\\hline

29 & $t^2,u^2$ & $u$ & $\langle 8,5 \rangle$ & $\langle 4,2 \rangle$ & $\Q$ & $\Q(\sqrt{3},\sqrt{5},i)$ & 24 & 8\\\hline 
&\multicolumn{8}{l|}{$9x^4+25y^4-4z^4$}\\\hline

30 & $u^2t,t^2$ & $t^3utu$ & $\langle 8,5 \rangle$ & $\langle 4,2 \rangle$ & $\Q$ & $\Q(\sqrt{3},\sqrt{5},i)$ & 24 & 8\\\hline 
&\multicolumn{8}{l|}{$x^4+30x^2y^2+25y^4-18z^4$}\\\hline

31 & $u^2t,t^2$ & id & $\langle 8,5 \rangle$ & $\langle 4,2 \rangle$ & $\Q$ & $\Q(\sqrt{3},\sqrt{5},i)$ & 24 & 8\\\hline 
&\multicolumn{8}{l|}{$2x^4+60x^2y^2+50y^4+9z^4$}\\\hline

32 & $s,u^2t$ & id & $\langle 12,4 \rangle$ & $\langle 6,1 \rangle$ & $\Q$ & $\Gal(x^6\!+\!2x^3\!+\!2)$ & 26 & 10\\\hline 
&\multicolumn{8}{l|}{$4x^3y-3x^2z^2+12xy^2z-2y^4-2yz^3$}\\\hline

33 & $t^3utu,u^2$ & $ut$ & $\langle 16,6 \rangle$ & $\langle 8,2 \rangle$ & $\Q(\sqrt{-5})$ & $\Gal(x^8\!-\!2x^4\!+\!5)$ & 29 & 17\\\hline 
&\multicolumn{8}{l|}{$x^4-30x^2y^2-80xy^3-55y^4-2z^4$}\\\hline

34 & $t^3utu^3,u^2t$ & $u$ & $\langle 16,7 \rangle$ & $\langle 8,3 \rangle$ & $\Q$ & $\Gal(x^8\!-\!6x^4\!-\!8x^2\!-\!1)$ & 31 & 18\\\hline 
&\multicolumn{8}{l|}{$x^4-12x^2y^2-32xy^3-28y^4+z^4$}\\\hline

35 & $tu^2tut$ & $tutu^2$ & $\langle 16,7 \rangle$ & $\langle 8,1 \rangle$ & $\Q$ & $\Gal(x^8\!-\!8x^4\!-\!2)$ & 32 & 15\\\hline 
&\multicolumn{8}{l|}{$2x^3y-xy^3-z^4$}\\\hline

36 & $u^2tu$ & $u$ & $\langle 16,8 \rangle$ & $\langle 8,1 \rangle$ & $\Q$ & $\Gal(x^8\!-\!2)$ & 34 & 15\\\hline 
&\multicolumn{8}{l|}{$x^3y+2xy^3+z^4$}\\\hline

37 & $t^3utu^3,t$ & $u$ & $\langle 16,8 \rangle$ & $\langle 8,4 \rangle$ & $\Q$ & $\Gal(x^8\!-\!10x^4\!-\!100)$ & 33 & 22\\\hline 
&\multicolumn{8}{l|}{$x^4+10x^3y+30x^2y^2-100y^4-10z^4$}\\\hline

38 & $t,u^2$ & $utu$ & $\langle 16,11 \rangle$ & $\langle 8,3 \rangle$ & $\Q$ & $\Gal(x^8\!-\!2x^4\!+\!9)$ & 35 & 18\\\hline 
&\multicolumn{8}{l|}{$4x^4+4x^3y+6x^2y^2-2xy^3+y^4-2z^4$}\\\hline

39 & $t,u^2$ & id & $\langle 16,11 \rangle$ & $\langle 8,3 \rangle$ & $\Q$ & $\Gal(x^8\!-\!2x^4\!+\!9)$ & 35 & 18\\\hline 
&\multicolumn{8}{l|}{$4x^4+4x^3y+6x^2y^2-2xy^3+y^4+2z^4$}\\\hline

40 & $t^3utu^3,u^2t$ & id & $\langle 16,11 \rangle$ & $\langle 8,3 \rangle$ & $\Q$ & $\Gal(x^8\!-\!2x^4\!+\!9)$ & 35 & 18\\\hline 
&\multicolumn{8}{l|}{$5x^4-8x^3y+12x^2y^2+16xy^3+20y^4+2z^4$}\\\hline

\end{tabular}
\end{center}
\end{table}

\begin{table}
\begin{center}
{\textsc{Table 4}.  Twists of the Fermat quartic (page 3 of 3).}
\bigskip
\bigskip

\small
\setlength{\extrarowheight}{4pt}
\begin{tabular}{|r|c|c|c|c|c|l|c|c|}\hline
\# & Gen$(H_0)$ & $h$ & ID$(H)$ &\hspace{-3pt} ID$(H_0)$\hspace{-4pt}\,  & $k$ & $K$ & $\ST_k$ & $\ST_{kM}$\\\hline

41 & $t^3utu,u^2$ & id & $\langle 16,11 \rangle$ & $\langle 8,2 \rangle$ & $\Q$ & $\Gal(x^8\!+\!5x^4\!+\!25)$ & 36 & 17\\\hline 
&\multicolumn{8}{l|}{$9x^4+5y^4+z^4$}\\\hline

42 & $t^3utu,u^2$ & $u$ & $\langle 16,11 \rangle$ & $\langle 8,2 \rangle$ & $\Q$ & $\Gal(x^8\!+\!5x^4\!+\!25)$ & 36 & 17\\\hline 
&\multicolumn{8}{l|}{$9x^4+5y^4-4z^4$}\\\hline

43 & $utu,t^3$ & $u^2$ & $\langle 16,11 \rangle$ & $\langle 8,2 \rangle$ & $\Q$ & $\Gal(x^8\!+\!5x^4\!+\!25)$ & 36 & 17\\\hline 
&\multicolumn{8}{l|}{$7x^4+8x^3y+6x^2y^2+8xy^3+7y^4+10z^4$}\\\hline

44 & $t^3utu,u^2$ & $u^2t$ & $\langle 16,13 \rangle$ & $\langle 8,2 \rangle$ & $\Q$ & $\Gal(x^8\!-\!8x^4\!+\!25)$ & 39 & 17\\\hline 
&\multicolumn{8}{l|}{$x^4+5x^3y-25xy^3-25y^4+2z^4$}\\\hline

45 & $t,utu$ & id & $\langle 16,13 \rangle$ & $\langle 8,2 \rangle$ & $\Q$ & $\Gal(x^8\!-\!8x^4\!+\!25)$ & 39 & 17\\\hline 
&\multicolumn{8}{l|}{$19x^4+32x^3y+21x^2y^2+8xy^3+3y^4+2y^3z+6y^2z^2+8yz^3+4z^4$}\\\hline

46 & $t^3utu^3,t$ & id & $\langle 16,13 \rangle$ & $\langle 8,4 \rangle$ & $\Q$ & $\Gal(x^8\!+\!12x^4\!+\!9)$ & 37 & 22\\\hline 
&\multicolumn{8}{l|}{$x^4-2x^3y+6x^2y^2+4xy^3+4y^4+3z^4$}\\\hline

47 & $s,u^2$ & $u^2t$ & $\langle 24,12 \rangle$ & $\langle 12,3 \rangle$ & $\Q$ & $\Gal(x^4\!-\!16x\!-\!24)$ & 42 & 25\\\hline 
&\multicolumn{8}{l|}{$x^4-3x^3z-12x^2yz+16xy^3-xz^3+9y^4+12y^3z+6y^2z^2$}\\\hline

48 & $s,u^2$ & id & $\langle 24,13 \rangle$ & $\langle 12,3 \rangle$ & $\Q$ & $\Gal(x^6\!-\!x^4\!-\!2x^2\!+\!1)$ & 43 & 25\\\hline 
&\multicolumn{8}{l|}{$3x^4+4x^3y+4x^3z+6x^2y^2+6x^2z^2+8xy^3+12xyz^2+5y^4+4y^3z+12y^2z^2+z^4$}\\\hline

49 & $tu^2tut,u^2$ & id & $\langle 32,7 \rangle$ & $\langle 16,6 \rangle$ & $\Q$ & $\Gal(x^8\!-\!10x^4\!+\!20)$ & 44 & 30\\\hline 
&\multicolumn{8}{l|}{$4x^4-8x^3y+12x^2y^2+2y^4+5z^4$}\\\hline

50 & $u,u^2tu^3t$ & $u^2t$ & $\langle 32,11 \rangle$ & $\langle 16,2 \rangle$ & $\Q$ & $\Gal(x^8\!-\!2x^4\!+\!5)$ & 45 & 28\\\hline 
&\multicolumn{8}{l|}{$x^4-30x^2y^2-80xy^3-55y^4-2z^4$}\\\hline

51 & $u,t^3ut$ & id & $\langle 32,34 \rangle$ & $\langle 16,2 \rangle$ & $\Q$ & $\Gal(x^{16}\!-\!4x^{12}\!+\!6x^8\!+\!20x^4\!+\!1)$ & 47 & 28\\\hline 
&\multicolumn{8}{l|}{$2x^4+3y^4+z^4$}\\\hline

52 & $t^3utu,u^2,t$ & $u$ & $\langle 32,43 \rangle$ & $\langle 16,13 \rangle$ & $\Q$ & $\Gal(x^8\!+\!6x^4\!-\!9)$ & 48 & 38\\\hline 
&\multicolumn{8}{l|}{$3x^4-36x^2y^2-96xy^3-84y^4+2z^4$}\\\hline

53 & $tu^2tut,u^2$ & $u$ & $\langle 32,43 \rangle$ & $\langle 16,6 \rangle$ & $\Q$ & $\Gal(x^8\!-\!10x^4\!+\!45)$ & 49 & 30\\\hline 
&\multicolumn{8}{l|}{$9x^4+36x^3y-24xy^3-4y^4-10z^4$}\\\hline

54 & $utu,u^2,t$ & id & $\langle 32,49 \rangle$ & $\langle 16,13 \rangle$ & $\Q$ & $\Gal(x^8\!+\!8x^4\!+\!9)$ & 50 & 38\\\hline 
&\multicolumn{8}{l|}{$x^4+x^3y+24x^2y^2+67xy^3+79y^4+2z^4$}\\\hline

55 & $s,t$ & id & $\langle 48,48 \rangle$ & $\langle 24,12 \rangle$ & $\Q$ & $\Gal(x^6\!-\!x^4\!+\!5x^2\!+\!1)$ & 53 & 41\\\hline 
&\multicolumn{8}{l|}{$3x^4+2x^3z+6x^2yz+12xy^3-30xy^2z+2xz^3-27y^4+38y^3z+18y^2z^2-10yz^3$}\\\hline

56 & $t,u$ & id & $\langle 64,134 \rangle$ & $\langle 32,11 \rangle$ & $\Q$ & $\Gal(x^8\!-\!4x^4\!-\!14)$ & 54 & 46\\\hline 
&\multicolumn{8}{l|}{$x^4-42x^2y^2-168xy^3-203y^4+z^4$}\\\hline

57 & $s,u$ & $u^2t$ & $\langle 96,64 \rangle$ & $\langle 48,3 \rangle$ & $\Q$ & $\Gal(x^{12}\!+\!6x^4\!+\!4)$ & 55 & 52\\\hline 
&\multicolumn{8}{l|}{$6x^3z+3x^2y^2+27x^2z^2+6xy^3+18xyz^2+4y^4+2y^3z+18yz^3-36z^4$}\\\hline

58 & $s,u$ & id & $\langle 96,72 \rangle$ & $\langle 48,3 \rangle$ & $\Q$ & $\Gal(x^{12}\!+\!x^8\!-\!2x^4\!-\!1)$ & 57 & 52\\\hline 
&\multicolumn{8}{l|}{$x^3y-x^3z+3x^2y^2+28xy^3-84xy^2z+84xyz^2+28y^4-56y^3z+98z^4$}\\\hline

59 & $s,t,u$ & id & $\langle 192,956 \rangle$ & $\langle 96,64 \rangle$ & $\Q$ & $\Gal(x^{12}\!+\!48x^4\!+\!64)$ & 59 & 56\\\hline 
&\multicolumn{8}{l|}{$44x^4+120x^3y+36x^3z+60x^2yz+9x^2z^2-200xy^3+xz^3-150y^4-15y^2z^2$}\\\hline

\end{tabular}
\end{center}
\end{table}

\begin{table}
\begin{center}
\small
\caption{Twists of the Klein quartic corresponding to subgroups $H\subseteq G_{C^0_7}$.
See \eqref{equation: autos7} for the definitions of $s,t,u$ and see \eqref{equation: descH} for the definition of~$h$.
We identify $\ST_k=\ST(C)$ and $\ST_{kM}=\ST(C_{kM})$ by row numbers in Table~\ref{table: distributions}; here $M=\Q(\sqrt{-7})$ and $a\coloneqq (-1+\sqrt{-7})/2$.
}\label{table: example curves Klein}
\bigskip

\setlength{\extrarowheight}{4pt}
\begin{tabular}{|c|c|c|c|c|c|l|c|c|}\hline
\# & Gen$(H_0)$ & $h$ & ID$(H)$ &\hspace{-3pt} ID$(H_0)$\hspace{-4pt}\, & $k$ & $K$ &\hspace{-2pt} $\ST_k$\hspace{-3pt}\, & \hspace{-2pt}$\ST_{kM}$\hspace{-5pt}\,\\\hline

1 & id & id & $\langle 2,1 \rangle$ & $\langle 1,1 \rangle$ & $\Q$ & $\Q(a)$ & 3 & 1\\\hline 
$\boldsymbol{C_7^0}$&\multicolumn{8}{l|}{$x^4+y^4+z^4+6(xy^3+yz^3+zx^3)-3(x^2y^2+y^2z^2+z^2x^2)+3xyz(x+y+z)$}\\\hline

2 & $t$ & id & $\langle 4,2 \rangle$ & $\langle 2,1 \rangle$ & $\Q$ & $\Q(a,i)$ & 9 & 2\\\hline 
&\multicolumn{8}{l|}{$3x^4+28x^3y+105x^2y^2-21x^2z^2+196xy^3+147y^4+147y^2z^2-49z^4$}\\\hline

3 & $ustu^6,sutu^6s^2$ & - & $\langle 4,2 \rangle$ & $\langle 4,2 \rangle$ & $\Q(a)$ & $\Q(\sqrt{2},\sqrt{3},a)$ & 8 & 8\\\hline 
&\multicolumn{8}{l|}{$x^4+9ax^2y^2+6ax^2z^2+9y^4+18ay^2z^2+4z^4$}\\\hline

4 & $s$ & $t$ & $\langle 6,1 \rangle$ & $\langle 3,1 \rangle$ & $\Q$ & $\Gal(x^3\!-\!x^2\!+\!2x\!-\!3)$ & 11 & 4\\\hline 
&\multicolumn{8}{l|}{$x^4+3x^3y-9x^3z+9x^2y^2-6x^2z^2+18xy^3+3xy^2z-3xyz^2+y^4+4y^3z-3y^2z^2+7yz^3$}\\\hline

5 & $s$ & id & $\langle 6,2 \rangle$ & $\langle 3,1 \rangle$ & $\Q$ & $\Q(\zeta_7)$ & 12 & 4\\\hline 
&\multicolumn{8}{l|}{$x^3y+xz^3+y^3z$}\\\hline

6 & $u^2tu^3tu^2$ & $u^5tu^2$ & $\langle 8,1 \rangle$ & $\langle 4,1 \rangle$ & $\Q(i)$ & $\Gal(x^8\!+\!2x^7\!-\!14x^4\!+\!16x\!+\!4)$ & 14 & 6\\\hline 
&\multicolumn{8}{l|}{$x^4+3x^2y^2-3x^2z^2+2y^3z+3y^2z^2+2yz^3$}\\\hline

7 & $u^2tu^3tu^2$ & id & $\langle 8,3 \rangle$ & $\langle 4,1 \rangle$ & $\Q$ & $\Gal(x^4\!-\!4x^2\!-\!14)$ & 20 & 6\\\hline 
&\multicolumn{8}{l|}{$12x^4-80x^3y+60x^2y^2-24x^2z^2-104xy^3+24xyz^2+83y^4+36y^2z^2-2z^4$}\\\hline

8 & $su,tu$ & - & $\langle 12,3 \rangle$ & $\langle 12,3 \rangle$ & $\Q(a)$ & $\Gal(x^6\!-\!147x^2\!+\!343)$ & 25 & 25\\\hline 
&\multicolumn{8}{l|}{$3x^4+(-18a+12)x^3y+(12a+4)x^3z+(-27a+36)x^2y^2+(9a+6)x^2z^2+36xy^2z+27y^4$}\\
&\multicolumn{8}{l|}{$+\,(54a-36)y^3z+(-54a+36)y^2z^2+(18a-12)yz^3+(-3a+2)z^4$}\\\hline

9 & $s,t$ & id & $\langle 12,4 \rangle$ & $\langle 6,1 \rangle$ & $\Q$ & $\Gal(x^3\!-\!x^2\!+\!5x\!+\! 1)\cdot\Q(a)$ & 26 & 10\\\hline 
&\multicolumn{8}{l|}{$7x^3z+3x^2y^2-6xyz^2+2y^3z-4z^4$}\\\hline

10 & $u$ & id & $\langle 14,1 \rangle$ & $\langle 7,1 \rangle$ & $\Q$ & $\Gal(x^7\!+\!7x^3\!-\!7x^2\!+\!7x\!+\!1)$ & 27 & 13\\\hline 
&\multicolumn{8}{l|}{$x^3y-21x^2z^2+xy^3-42xyz^2-147xz^3+2y^4+21y^3z+63y^2z^2-196z^4$}\\\hline

11 & $u^2tu^3tu^2,u^5tu^2$ & id & $\langle 16,7 \rangle$ & $\langle 8,3 \rangle$ & $\Q$ & $\Gal(x^8\!+\!2x^7\!-\!14x^4\!+\!16x\!+\!4)$ & 31 & 18\\\hline 
&\multicolumn{8}{l|}{$x^4+3x^2y^2-3x^2z^2+2y^3z+3y^2z^2+2yz^3$}\\\hline

12 & $sust,su^6s^2tu^2$ & - & $\langle 24,12 \rangle$ & $\langle 24,12 \rangle$ & $\Q(a)$ & $\Gal(x^4\!+\! 2x^3\!+\!6x^2\!-\!6)\cdot\Q(a)$ & 41 & 41\\\hline 
&\multicolumn{8}{l|}{$(3a-2)x^4+(30a-20)x^3y+(90a-60)x^2y^2+(9a+6)x^2z^2+(150a-100)xy^3+60xy^2z$}\\
&\multicolumn{8}{l|}{$+\,(12a+4)xz^3+(150a-25)y^4+(-45a+60)y^2z^2+(30a-20)yz^3+3z^4$}\\\hline

13 & $u,s$ & id & $\langle 42,1 \rangle$ & $\langle 21,1 \rangle$ & $\Q$ & $\Gal(x^7\!-\!2)$ & 51 & 40\\\hline 
&\multicolumn{8}{l|}{$2x^3y+xz^3+y^3z$}\\\hline

14 & $t,u,s$ & id & \hspace{-2pt}$\langle 336,208 \rangle$\hspace{-4pt}\, &\hspace{-5pt} $\langle 168,42 \rangle$ \hspace{-7pt}\, & $\Q$ & $\Gal(x^8\!+\!4x^7\!+\!21x^4\!+\!18x\!+\!9)$ & 60 & 58\\\hline 
&\multicolumn{8}{l|}{$2x^3y-2x^3z-3x^2z^2-2xy^3-2xz^3-4y^3z+3y^2z^2-yz^3$}\\\hline
\end{tabular}
\end{center}
\end{table}

\begin{table}
\begin{center}
\footnotesize
\setlength{\extrarowheight}{0.1pt}
\caption{The 60 Sato-Tate distributions arising for Fermat and Klein twists.}\label{table: distributions}
\vspace{-12pt}
\begin{tabular}{rlrrrrrrrrrcccccccc}
$\#$ & ID &\hspace{-20pt} $\M_{101}$\hspace{-1pt} & \hspace{-5pt}$\M_{030}$ & $\M_{202}$ & \hspace{-4pt}$\M_{200}$\hspace{-4pt}\, & $\M_{400}$\hspace{-2pt} &\hspace{-4pt} $\M_{010}$\hspace{-4pt} & $\M_{020}$\hspace{-4pt} & $\M_{002}$\hspace{-4pt} & $\M_{004}$ & $z_1$  &&&\hspace{-5pt}$z_2$\hspace{4pt}{\,}&& & $z_3$ & \\\toprule
1 & $\langle1,1\rangle$ & 54 & 1215 & 4734 & 18 & 486 & 9 & 99 & 164 & 47148 & 0 & 0\hspace{-8pt}&\hspace{-8pt}0\hspace{-8pt}&\hspace{-8pt}0\hspace{-8pt}&\hspace{-8pt}0\hspace{-8pt}&\hspace{-8pt}0 & 0\\
2 & $\langle2,1\rangle$ & 26 & 611 & 2374 & 10 & 246 & 5 & 51 & 84 & 23596 & 0 & 0\hspace{-8pt}&\hspace{-8pt}0\hspace{-8pt}&\hspace{-8pt}0\hspace{-8pt}&\hspace{-8pt}0\hspace{-8pt}&\hspace{-8pt}0 & 0\\
3 & $\langle2,1\rangle$ & 27 & 621 & 2367 & 9 & 243 & 6 & 54 & 82 & 23574 & $\nicefrac{1}{2}$ & 0\hspace{-8pt}&\hspace{-8pt}0\hspace{-8pt}&\hspace{-8pt}0\hspace{-8pt}&\hspace{-8pt}0\hspace{-8pt}&\hspace{-8pt}$\nicefrac{1}{2}$ & $\nicefrac{1}{2}$\\
4 & $\langle3,1\rangle$ & 18 & 405 & 1578 & 6 & 162 & 3 & 33 & 56 & 15720 & $\nicefrac{2}{3}$ & 0\hspace{-8pt}&\hspace{-8pt}$\nicefrac{2}{3}$\hspace{-8pt}&\hspace{-8pt}0\hspace{-8pt}&\hspace{-8pt}0\hspace{-8pt}&\hspace{-8pt}0 & 0\\
5 & $\langle4,1\rangle$ & 13 & 305 & 1187 & 5 & 123 & 2 & 26 & 42 & 11798 & $\nicefrac{1}{2}$ & $\nicefrac{1}{2}$\hspace{-8pt}&\hspace{-8pt}0\hspace{-8pt}&\hspace{-8pt}0\hspace{-8pt}&\hspace{-8pt}0\hspace{-8pt}&\hspace{-8pt}0 & $\nicefrac{1}{2}$\\
6 & $\langle4,1\rangle$ & 14 & 309 & 1194 & 6 & 126 & 3 & 27 & 44 & 11820 & 0 & 0\hspace{-8pt}&\hspace{-8pt}0\hspace{-8pt}&\hspace{-8pt}0\hspace{-8pt}&\hspace{-8pt}0\hspace{-8pt}&\hspace{-8pt}0 & 0 & \!a\\
7 & $\langle4,1\rangle$ & 24 & 443 & 1614 & 10 & 198 & 5 & 43 & 68 & 14444 & 0 & 0\hspace{-8pt}&\hspace{-8pt}0\hspace{-8pt}&\hspace{-8pt}0\hspace{-8pt}&\hspace{-8pt}0\hspace{-8pt}&\hspace{-8pt}0 & 0\\
8 & $\langle4,2\rangle$ & 12 & 309 & 1194 & 6 & 126 & 3 & 27 & 44 & 11820 & 0 & 0\hspace{-8pt}&\hspace{-8pt}0\hspace{-8pt}&\hspace{-8pt}0\hspace{-8pt}&\hspace{-8pt}0\hspace{-8pt}&\hspace{-8pt}0 & 0 & \!a\\
9 & $\langle4,2\rangle$ & 13 & 319 & 1187 & 5 & 123 & 4 & 30 & 42 & 11798 & $\nicefrac{1}{2}$ & 0\hspace{-8pt}&\hspace{-8pt}0\hspace{-8pt}&\hspace{-8pt}0\hspace{-8pt}&\hspace{-8pt}0\hspace{-8pt}&\hspace{-8pt}$\nicefrac{1}{2}$ & $\nicefrac{1}{2}$\\
10 & $\langle6,1\rangle$ & 8 & 206 & 796 & 4 & 84 & 2 & 18 & 30 & 7882 & $\nicefrac{1}{3}$ & 0\hspace{-8pt}&\hspace{-8pt}$\nicefrac{1}{3}$\hspace{-8pt}&\hspace{-8pt}0\hspace{-8pt}&\hspace{-8pt}0\hspace{-8pt}&\hspace{-8pt}0 & 0\\
11 & $\langle6,1\rangle$ & 9 & 216 & 789 & 3 & 81 & 3 & 21 & 28 & 7860 & $\nicefrac{5}{6}$ & 0\hspace{-8pt}&\hspace{-8pt}$\nicefrac{1}{3}$\hspace{-8pt}&\hspace{-8pt}0\hspace{-8pt}&\hspace{-8pt}0\hspace{-8pt}&\hspace{-8pt}$\nicefrac{1}{2}$ & $\nicefrac{1}{2}$\\
12 & $\langle6,2\rangle$ & 9 & 207 & 789 & 3 & 81 & 2 & 18 & 28 & 7860 & $\nicefrac{5}{6}$ & 0\hspace{-8pt}&\hspace{-8pt}$\nicefrac{2}{3}$\hspace{-8pt}&\hspace{-8pt}0\hspace{-8pt}&\hspace{-8pt}0\hspace{-8pt}&\hspace{-8pt}$\nicefrac{1}{6}$ & $\nicefrac{1}{2}$\\
13 & $\langle7,1\rangle$ & 12 & 201 & 732 & 6 & 90 & 3 & 21 & 32 & 6936 & 0 & 0\hspace{-8pt}&\hspace{-8pt}0\hspace{-8pt}&\hspace{-8pt}0\hspace{-8pt}&\hspace{-8pt}0\hspace{-8pt}&\hspace{-8pt}0 & 0\\
14 & $\langle8,1\rangle$ & 7 & 155 & 597 & 3 & 63 & 2 & 14 & 22 & 5910 & $\nicefrac{1}{2}$ & 0\hspace{-8pt}&\hspace{-8pt}0\hspace{-8pt}&\hspace{-8pt}$\nicefrac{1}{2}$\hspace{-8pt}&\hspace{-8pt}0\hspace{-8pt}&\hspace{-8pt}0 & $\nicefrac{1}{2}$\\
15 & $\langle8,1\rangle$ & 12 & 225 & 812 & 6 & 102 & 3 & 23 & 36 & 7236 & 0 & 0\hspace{-8pt}&\hspace{-8pt}0\hspace{-8pt}&\hspace{-8pt}0\hspace{-8pt}&\hspace{-8pt}0\hspace{-8pt}&\hspace{-8pt}0 & 0\\
16 & $\langle8,2\rangle$ & 7 & 161 & 597 & 3 & 63 & 2 & 16 & 22 & 5910 & $\nicefrac{1}{2}$ & $\nicefrac{1}{4}$\hspace{-8pt}&\hspace{-8pt}0\hspace{-8pt}&\hspace{-8pt}0\hspace{-8pt}&\hspace{-8pt}0\hspace{-8pt}&\hspace{-8pt}$\nicefrac{1}{4}$ & $\nicefrac{1}{2}$ & \!b\\
17 & $\langle8,2\rangle$ & 12 & 225 & 814 & 6 & 102 & 3 & 23 & 36 & 7244 & 0 & 0\hspace{-8pt}&\hspace{-8pt}0\hspace{-8pt}&\hspace{-8pt}0\hspace{-8pt}&\hspace{-8pt}0\hspace{-8pt}&\hspace{-8pt}0 & 0\\
18 & $\langle8,3\rangle$ & 6 & 158 & 604 & 4 & 66 & 2 & 15 & 24 & 5932 & 0 & 0\hspace{-8pt}&\hspace{-8pt}0\hspace{-8pt}&\hspace{-8pt}0\hspace{-8pt}&\hspace{-8pt}0\hspace{-8pt}&\hspace{-8pt}0 & 0 & c\\
19 & $\langle8,3\rangle$ & 6 & 161 & 597 & 3 & 63 & 2 & 16 & 22 & 5910 & $\nicefrac{1}{2}$ & $\nicefrac{1}{4}$\hspace{-8pt}&\hspace{-8pt}0\hspace{-8pt}&\hspace{-8pt}0\hspace{-8pt}&\hspace{-8pt}0\hspace{-8pt}&\hspace{-8pt}$\nicefrac{1}{4}$ & $\nicefrac{1}{2}$ & \!b\\
20 & $\langle8,3\rangle$ & 7 & 168 & 597 & 3 & 63 & 3 & 18 & 22 & 5910 & $\nicefrac{1}{2}$ & 0\hspace{-8pt}&\hspace{-8pt}0\hspace{-8pt}&\hspace{-8pt}0\hspace{-8pt}&\hspace{-8pt}0\hspace{-8pt}&\hspace{-8pt}$\nicefrac{1}{2}$ & $\nicefrac{1}{2}$ & \!d\\
21 & $\langle8,3\rangle$ & 12 & 235 & 807 & 5 & 99 & 4 & 26 & 34 & 7222 & $\nicefrac{1}{2}$ & 0\hspace{-8pt}&\hspace{-8pt}0\hspace{-8pt}&\hspace{-8pt}0\hspace{-8pt}&\hspace{-8pt}0\hspace{-8pt}&\hspace{-8pt}$\nicefrac{1}{2}$ & $\nicefrac{1}{2}$\\
22 & $\langle8,4\rangle$ & 8 & 158 & 604 & 4 & 66 & 2 & 15 & 24 & 5932 & 0 & 0\hspace{-8pt}&\hspace{-8pt}0\hspace{-8pt}&\hspace{-8pt}0\hspace{-8pt}&\hspace{-8pt}0\hspace{-8pt}&\hspace{-8pt}0 & 0 & c\\
23 & $\langle8,4\rangle$ & 12 & 221 & 807 & 5 & 99 & 2 & 22 & 34 & 7222 & $\nicefrac{1}{2}$ & $\nicefrac{1}{2}$\hspace{-8pt}&\hspace{-8pt}0\hspace{-8pt}&\hspace{-8pt}0\hspace{-8pt}&\hspace{-8pt}0\hspace{-8pt}&\hspace{-8pt}0 & $\nicefrac{1}{2}$\\
24 & $\langle8,5\rangle$ & 6 & 168 & 597 & 3 & 63 & 3 & 18 & 22 & 5910 & $\nicefrac{1}{2}$ & 0\hspace{-8pt}&\hspace{-8pt}0\hspace{-8pt}&\hspace{-8pt}0\hspace{-8pt}&\hspace{-8pt}0\hspace{-8pt}&\hspace{-8pt}$\nicefrac{1}{2}$ & $\nicefrac{1}{2}$ &\!d\\
25 & $\langle12,3\rangle$ & 4 & 103 & 398 & 2 & 42 & 1 & 9 & 16 & 3944 & $\nicefrac{2}{3}$ & 0\hspace{-8pt}&\hspace{-8pt}$\nicefrac{2}{3}$\hspace{-8pt}&\hspace{-8pt}0\hspace{-8pt}&\hspace{-8pt}0\hspace{-8pt}&\hspace{-8pt}0 & 0\\
26 & $\langle12,4\rangle$ & 4 & 112 & 398 & 2 & 42 & 2 & 12 & 15 & 3941 & $\nicefrac{2}{3}$ & 0\hspace{-8pt}&\hspace{-8pt}$\nicefrac{1}{3}$\hspace{-8pt}&\hspace{-8pt}0\hspace{-8pt}&\hspace{-8pt}0\hspace{-8pt}&\hspace{-8pt}$\nicefrac{1}{3}$ & $\nicefrac{1}{2}$\\
27 & $\langle14,1\rangle$ & 6 & 114 & 366 & 3 & 45 & 3 & 15 & 16 & 3468 & $\nicefrac{1}{2}$ & 0\hspace{-8pt}&\hspace{-8pt}0\hspace{-8pt}&\hspace{-8pt}0\hspace{-8pt}&\hspace{-8pt}0\hspace{-8pt}&\hspace{-8pt}$\nicefrac{1}{2}$ & $\nicefrac{1}{2}$\\
28 & $\langle16,2\rangle$ & 12 & 183 & 624 & 6 & 90 & 3 & 21 & 32 & 4956 & 0 & 0\hspace{-8pt}&\hspace{-8pt}0\hspace{-8pt}&\hspace{-8pt}0\hspace{-8pt}&\hspace{-8pt}0\hspace{-8pt}&\hspace{-8pt}0 & 0\\
29 & $\langle16,6\rangle$ & 6 & 113 & 407 & 3 & 51 & 2 & 12 & 18 & 3622 & $\nicefrac{1}{2}$ & 0\hspace{-8pt}&\hspace{-8pt}0\hspace{-8pt}&\hspace{-8pt}$\nicefrac{1}{2}$\hspace{-8pt}&\hspace{-8pt}0\hspace{-8pt}&\hspace{-8pt}0 & $\nicefrac{1}{2}$\\
30 & $\langle16,6\rangle$ & 6 & 116 & 412 & 4 & 54 & 2 & 13 & 20 & 3636 & 0 & 0\hspace{-8pt}&\hspace{-8pt}0\hspace{-8pt}&\hspace{-8pt}0\hspace{-8pt}&\hspace{-8pt}0\hspace{-8pt}&\hspace{-8pt}0 & 0\\
31 & $\langle16,7\rangle$ & 3 & 86 & 302 & 2 & 33 & 2 & 10 & 12 & 2966 & $\nicefrac{1}{2}$ & 0\hspace{-8pt}&\hspace{-8pt}0\hspace{-8pt}&\hspace{-8pt}$\nicefrac{1}{4}$\hspace{-8pt}&\hspace{-8pt}0\hspace{-8pt}&\hspace{-8pt}$\nicefrac{1}{4}$ & $\nicefrac{1}{2}$ & \!e\\
32 & $\langle16,7\rangle$ & 6 & 126 & 406 & 3 & 51 & 3 & 16 & 18 & 3618 & $\nicefrac{1}{2}$ & 0\hspace{-8pt}&\hspace{-8pt}0\hspace{-8pt}&\hspace{-8pt}0\hspace{-8pt}&\hspace{-8pt}0\hspace{-8pt}&\hspace{-8pt}$\nicefrac{1}{2}$ & $\nicefrac{1}{2}$\\
33 & $\langle16,8\rangle$ & 4 & 86 & 302 & 2 & 33 & 2 & 10 & 12 & 2966 & $\nicefrac{1}{2}$ & 0\hspace{-8pt}&\hspace{-8pt}0\hspace{-8pt}&\hspace{-8pt}$\nicefrac{1}{4}$\hspace{-8pt}&\hspace{-8pt}0\hspace{-8pt}&\hspace{-8pt}$\nicefrac{1}{4}$ & $\nicefrac{1}{2}$& \!e\\
34 & $\langle16,8\rangle$ & 6 & 119 & 406 & 3 & 51 & 2 & 14 & 18 & 3618 & $\nicefrac{1}{2}$ & $\nicefrac{1}{4}$\hspace{-8pt}&\hspace{-8pt}0\hspace{-8pt}&\hspace{-8pt}0\hspace{-8pt}&\hspace{-8pt}0\hspace{-8pt}&\hspace{-8pt}$\nicefrac{1}{4}$ & $\nicefrac{1}{2}$\\
35 & $\langle16,11\rangle$ & 3 & 89 & 302 & 2 & 33 & 2 & 11 & 12 & 2966 & $\nicefrac{1}{2}$ & $\nicefrac{1}{8}$\hspace{-8pt}&\hspace{-8pt}0\hspace{-8pt}&\hspace{-8pt}0\hspace{-8pt}&\hspace{-8pt}0\hspace{-8pt}&\hspace{-8pt}$\nicefrac{3}{8}$ & $\nicefrac{1}{2}$& \!f\\
36 & $\langle16,11\rangle$ & 6 & 126 & 407 & 3 & 51 & 3 & 16 & 18 & 3622 & $\nicefrac{1}{2}$ & 0\hspace{-8pt}&\hspace{-8pt}0\hspace{-8pt}&\hspace{-8pt}0\hspace{-8pt}&\hspace{-8pt}0\hspace{-8pt}&\hspace{-8pt}$\nicefrac{1}{2}$ & $\nicefrac{1}{2}$\\
37 & $\langle16,13\rangle$ & 4 & 89 & 302 & 2 & 33 & 2 & 11 & 12 & 2966 & $\nicefrac{1}{2}$ & $\nicefrac{1}{8}$\hspace{-8pt}&\hspace{-8pt}0\hspace{-8pt}&\hspace{-8pt}0\hspace{-8pt}&\hspace{-8pt}0\hspace{-8pt}&\hspace{-8pt}$\nicefrac{3}{8}$ & $\nicefrac{1}{2}$ & \!f\\
38 & $\langle16,13\rangle$ & 6 & 116 & 414 & 4 & 54 & 2 & 13 & 20 & 3644 & 0 & 0\hspace{-8pt}&\hspace{-8pt}0\hspace{-8pt}&\hspace{-8pt}0\hspace{-8pt}&\hspace{-8pt}0\hspace{-8pt}&\hspace{-8pt}0 & 0\\
39 & $\langle16,13\rangle$ & 6 & 119 & 407 & 3 & 51 & 2 & 14 & 18 & 3622 & $\nicefrac{1}{2}$ & $\nicefrac{1}{4}$\hspace{-8pt}&\hspace{-8pt}0\hspace{-8pt}&\hspace{-8pt}0\hspace{-8pt}&\hspace{-8pt}0\hspace{-8pt}&\hspace{-8pt}$\nicefrac{1}{4}$ & $\nicefrac{1}{2}$\\
40 & $\langle21,1\rangle$ & 4 & 67 & 244 & 2 & 30 & 1 & 7 & 12 & 2316 & $\nicefrac{2}{3}$ & 0\hspace{-8pt}&\hspace{-8pt}$\nicefrac{2}{3}$\hspace{-8pt}&\hspace{-8pt}0\hspace{-8pt}&\hspace{-8pt}0\hspace{-8pt}&\hspace{-8pt}0 & 0\\
41 & $\langle24,12\rangle$ & 2 & 55 & 206 & 2 & 24 & 1 & 6 & 10 & 1994 & $\nicefrac{1}{3}$ & 0\hspace{-8pt}&\hspace{-8pt}$\nicefrac{1}{3}$\hspace{-8pt}&\hspace{-8pt}0\hspace{-8pt}&\hspace{-8pt}0\hspace{-8pt}&\hspace{-8pt}0 & 0\\
42 & $\langle24,12\rangle$ & 2 & 58 & 199 & 1 & 21 & 1 & 7 & 8 & 1972 & $\nicefrac{5}{6}$ & $\nicefrac{1}{4}$\hspace{-8pt}&\hspace{-8pt}$\nicefrac{1}{3}$\hspace{-8pt}&\hspace{-8pt}0\hspace{-8pt}&\hspace{-8pt}0\hspace{-8pt}&\hspace{-8pt}$\nicefrac{1}{4}$ & $\nicefrac{1}{2}$\\
43 & $\langle24,13\rangle$ & 2 & 56 & 199 & 1 & 21 & 1 & 6 & 8 & 1972 & $\nicefrac{5}{6}$ & 0\hspace{-8pt}&\hspace{-8pt}$\nicefrac{2}{3}$\hspace{-8pt}&\hspace{-8pt}0\hspace{-8pt}&\hspace{-8pt}0\hspace{-8pt}&\hspace{-8pt}$\nicefrac{1}{6}$ & $\nicefrac{1}{2}$\\
44 & $\langle32,7\rangle$ & 3 & 65 & 206 & 2 & 27 & 2 & 9 & 10 & 1818 & $\nicefrac{1}{2}$ & 0\hspace{-8pt}&\hspace{-8pt}0\hspace{-8pt}&\hspace{-8pt}$\nicefrac{1}{4}$\hspace{-8pt}&\hspace{-8pt}0\hspace{-8pt}&\hspace{-8pt}$\nicefrac{1}{4}$ & $\nicefrac{1}{2}$\\
45 & $\langle32,11\rangle$ & 6 & 95 & 312 & 3 & 45 & 2 & 12 & 16 & 2478 & $\nicefrac{1}{2}$ & $\nicefrac{1}{8}$\hspace{-8pt}&\hspace{-8pt}0\hspace{-8pt}&\hspace{-8pt}$\nicefrac{1}{4}$\hspace{-8pt}&\hspace{-8pt}0\hspace{-8pt}&\hspace{-8pt}$\nicefrac{1}{8}$ & $\nicefrac{1}{2}$\\
46 & $\langle32,11\rangle$ & 6 & 95 & 318 & 4 & 48 & 2 & 12 & 18 & 2496 & 0 & 0\hspace{-8pt}&\hspace{-8pt}0\hspace{-8pt}&\hspace{-8pt}0\hspace{-8pt}&\hspace{-8pt}0\hspace{-8pt}&\hspace{-8pt}0 & 0\\
47 & $\langle32,34\rangle$ & 6 & 105 & 312 & 3 & 45 & 3 & 15 & 16 & 2478 & $\nicefrac{1}{2}$ & 0\hspace{-8pt}&\hspace{-8pt}0\hspace{-8pt}&\hspace{-8pt}0\hspace{-8pt}&\hspace{-8pt}0\hspace{-8pt}&\hspace{-8pt}$\nicefrac{1}{2}$ & $\nicefrac{1}{2}$\\
48 & $\langle32,43\rangle$ & 3 & 65 & 207 & 2 & 27 & 2 & 9 & 10 & 1822 & $\nicefrac{1}{2}$ & 0\hspace{-8pt}&\hspace{-8pt}0\hspace{-8pt}&\hspace{-8pt}$\nicefrac{1}{4}$\hspace{-8pt}&\hspace{-8pt}0\hspace{-8pt}&\hspace{-8pt}$\nicefrac{1}{4}$ & $\nicefrac{1}{2}$\\
49 & $\langle32,43\rangle$ & 3 & 68 & 206 & 2 & 27 & 2 & 10 & 10 & 1818 & $\nicefrac{1}{2}$ & $\nicefrac{1}{8}$\hspace{-8pt}&\hspace{-8pt}0\hspace{-8pt}&\hspace{-8pt}0\hspace{-8pt}&\hspace{-8pt}0\hspace{-8pt}&\hspace{-8pt}$\nicefrac{3}{8}$ & $\nicefrac{1}{2}$\\
50 & $\langle32,49\rangle$ & 3 & 68 & 207 & 2 & 27 & 2 & 10 & 10 & 1822 & $\nicefrac{1}{2}$ & $\nicefrac{1}{8}$\hspace{-8pt}&\hspace{-8pt}0\hspace{-8pt}&\hspace{-8pt}0\hspace{-8pt}&\hspace{-8pt}0\hspace{-8pt}&\hspace{-8pt}$\nicefrac{3}{8}$ & $\nicefrac{1}{2}$\\
51 & $\langle42,1\rangle$ & 2 & 38 & 122 & 1 & 15 & 1 & 5 & 6 & 1158 & $\nicefrac{5}{6}$ & 0\hspace{-8pt}&\hspace{-8pt}$\nicefrac{2}{3}$\hspace{-8pt}&\hspace{-8pt}0\hspace{-8pt}&\hspace{-8pt}0\hspace{-8pt}&\hspace{-8pt}$\nicefrac{1}{6}$ & $\nicefrac{1}{2}$\\
52 & $\langle48,3\rangle$ & 4 & 61 & 208 & 2 & 30 & 1 & 7 & 12 & 1656 & $\nicefrac{2}{3}$ & 0\hspace{-8pt}&\hspace{-8pt}$\nicefrac{2}{3}$\hspace{-8pt}&\hspace{-8pt}0\hspace{-8pt}&\hspace{-8pt}0\hspace{-8pt}&\hspace{-8pt}0 & 0\\
53 & $\langle48,48\rangle$ & 1 & 33 & 103 & 1 & 12 & 1 & 5 & 5 & 997 & $\nicefrac{2}{3}$ & $\nicefrac{1}{8}$\hspace{-8pt}&\hspace{-8pt}$\nicefrac{1}{3}$\hspace{-8pt}&\hspace{-8pt}0\hspace{-8pt}&\hspace{-8pt}0\hspace{-8pt}&\hspace{-8pt}$\nicefrac{5}{24}$ & $\nicefrac{1}{2}$\\
54 & $\langle64,134\rangle$ & 3 & 56 & 159 & 2 & 24 & 2 & 9 & 9 & 1248 & $\nicefrac{1}{2}$ & $\nicefrac{1}{16}$\hspace{-8pt}&\hspace{-8pt}0\hspace{-8pt}&\hspace{-8pt}$\nicefrac{1}{8}$\hspace{-8pt}&\hspace{-8pt}0\hspace{-8pt}&\hspace{-8pt}$\nicefrac{5}{16}$ & $\nicefrac{1}{2}$\\
55 & $\langle96,64\rangle$ & 2 & 34 & 104 & 1 & 15 & 1 & 5 & 6 & 828 & $\nicefrac{5}{6}$ & $\nicefrac{1}{8}$\hspace{-8pt}&\hspace{-8pt}$\nicefrac{1}{3}$\hspace{-8pt}&\hspace{-8pt}$\nicefrac{1}{4}$\hspace{-8pt}&\hspace{-8pt}0\hspace{-8pt}&\hspace{-8pt}$\nicefrac{1}{8}$ & $\nicefrac{1}{2}$\\
56 & $\langle96,64\rangle$ & 2 & 34 & 110 & 2 & 18 & 1 & 5 & 8 & 846 & $\nicefrac{1}{3}$ & 0\hspace{-8pt}&\hspace{-8pt}$\nicefrac{1}{3}$\hspace{-8pt}&\hspace{-8pt}0\hspace{-8pt}&\hspace{-8pt}0\hspace{-8pt}&\hspace{-8pt}0 & 0\\
57 & $\langle96,72\rangle$ & 2 & 35 & 104 & 1 & 15 & 1 & 5 & 6 & 828 & $\nicefrac{5}{6}$ & 0\hspace{-8pt}&\hspace{-8pt}$\nicefrac{2}{3}$\hspace{-8pt}&\hspace{-8pt}0\hspace{-8pt}&\hspace{-8pt}0\hspace{-8pt}&\hspace{-8pt}$\nicefrac{1}{6}$ & $\nicefrac{1}{2}$\\
58 & $\langle168,42\rangle$ & 2 & 19 & 52 & 2 & 12 & 1 & 4 & 6 & 366 & $\nicefrac{1}{3}$ & 0\hspace{-8pt}&\hspace{-8pt}$\nicefrac{1}{3}$\hspace{-8pt}&\hspace{-8pt}0\hspace{-8pt}&\hspace{-8pt}0\hspace{-8pt}&\hspace{-8pt}0 & 0\\
59 & $\langle192,956\rangle$\hspace{-6pt}\, & 1 & 21 & 55 & 1 & 9 & 1 & 4 & 4 & 423 & $\nicefrac{2}{3}$ & $\nicefrac{1}{16}$\hspace{-8pt}&\hspace{-8pt}$\nicefrac{1}{3}$\hspace{-8pt}&\hspace{-8pt}$\nicefrac{1}{8}$\hspace{-8pt}&\hspace{-8pt}0\hspace{-8pt}&\hspace{-8pt}$\nicefrac{7}{48}$ & $\nicefrac{1}{2}$\\
60 & $\langle336,208\rangle$\hspace{-6pt}\, & 1 & 12 & 26 & 1 & 6 & 1 & 3 & 3 & 183 & $\nicefrac{2}{3}$ & 0\hspace{-8pt}&\hspace{-8pt}$\nicefrac{1}{3}$\hspace{-8pt}&\hspace{-8pt}$\nicefrac{1}{4}$\hspace{-8pt}&\hspace{-8pt}0\hspace{-8pt}&\hspace{-8pt}$\nicefrac{1}{12}$ & $\nicefrac{1}{2}$\\
\bottomrule
\end{tabular}
\end{center}
\end{table}

\begin{table}[htb]
\begin{center}
\footnotesize
\setlength{\extrarowheight}{0.3pt}
\setlength{\tabcolsep}{4pt}
\caption{Sato-Tate statistics for Fermat twists over degree one primes $p\le 2^{26}$.}\label{table: Fermat statistics}
\begin{tabular}{rrrrrrrcrrrrrr}\vspace{5pt}
&\multicolumn{6}{c}{$\ST(C_k)$} &&\multicolumn{6}{c}{$\ST(C_{kM})$}\\
$\#$ & \hspace{8pt}$\overline\M_{101}$ & \hspace{-4pt}$\M_{101}$ & \hspace{6pt}$\overline\M_{030}$ & \hspace{-4pt}$\M_{030}$ & \hspace{10pt}$\overline\M_{202}$ & \hspace{-4pt}$\M_{202}$ && \hspace{8pt}$\overline\M_{101}$ &\hspace{-4pt} $\M_{101}$ & \hspace{10pt}$\overline\M_{030}$ &\hspace{-4pt} $\M_{030}$ & \hspace{12pt}$\overline\M_{202}$ &\hspace{-4pt} $\M_{202}$  \\\toprule
1 & 26.99 & 27 & 620.72 & 621 & 2365.83 & 2367 && 53.99 & 54 & 1214.69 & 1215 & 4732.64 & 4734\\
4 & 12.98 & 13 & 304.55 & 305 & 1185.12 & 1187 && 25.98 & 26 & 610.36 & 611 & 2371.23 & 2374\\
5 & 12.99 & 13 & 318.75 & 319 & 1185.94 & 1187 && 25.99 & 26 & 610.61 & 611 & 2372.38 & 2374\\
11 & 8.99 & 9 & 215.63 & 216 & 787.39 & 789 && 17.98 & 18 & 404.33 & 405 & 1575.11 & 1578\\
12 & 9.00 & 9 & 206.88 & 207 & 788.45 & 789 && 18.00 & 18 & 404.84 & 405 & 1577.24 & 1578\\
13 & 6.98 & 7 & 154.56 & 155 & 595.23 & 597 && 13.97 & 14 & 308.25 & 309 & 1190.96 & 1194\\
14 & 6.99 & 7 & 160.84 & 161 & 596.37 & 597 && 13.99 & 14 & 308.75 & 309 & 1192.99 & 1194\\
17 & 5.99 & 6 & 160.80 & 161 & 596.20 & 597 && 11.99 & 12 & 308.67 & 309 & 1192.65 & 1194\\
20 & 6.99 & 7 & 167.74 & 168 & 595.89 & 597 && 13.98 & 14 & 308.54 & 309 & 1192.02 & 1194\\
24 & 11.99 & 12 & 234.80 & 235 & 806.10 & 807 && 23.99 & 24 & 442.70 & 443 & 1612.54 & 1614\\
27 & 11.99 & 12 & 220.64 & 221 & 805.56 & 807 && 23.98 & 24 & 442.43 & 443 & 1611.64 & 1614\\
28 & 5.99 & 6 & 167.77 & 168 & 596.03 & 597 && 11.98 & 12 & 308.60 & 309 & 1192.31 & 1194\\
32 & 4.00 & 4 & 111.98 & 112 & 397.87 & 398 && 8.00 & 8 & 205.99 & 206 & 795.91 & 796\\
33 & 6.00 & 6 & 112.89 & 113 & 406.55 & 407 && 12.00 & 12 & 224.88 & 225 & 813.43 & 814\\
34 & 3.00 & 3 & 85.98 & 86 & 301.92 & 302 && 6.00 & 6 & 157.99 & 158 & 603.96 & 604\\
35 & 5.99 & 6 & 125.77 & 126 & 405.04 & 406 && 11.98 & 12 & 224.57 & 225 & 810.25 & 812\\
36 & 5.99 & 6 & 118.74 & 119 & 404.95 & 406 && 11.98 & 12 & 224.52 & 225 & 810.06 & 812\\
37 & 4.00 & 4 & 85.99 & 86 & 301.98 & 302 && 8.00 & 8 & 158.00 & 158 & 604.09 & 604\\
38 & 2.99 & 3 & 88.81 & 89 & 301.24 & 302 && 5.99 & 6 & 157.65 & 158 & 602.60 & 604\\
41 & 5.99 & 6 & 125.74 & 126 & 405.90 & 407 && 11.98 & 12 & 224.53 & 225 & 811.97 & 814\\
44 & 6.00 & 6 & 118.88 & 119 & 406.47 & 407 && 11.99 & 12 & 224.80 & 225 & 813.12 & 814\\
46 & 3.99 & 4 & 88.73 & 89 & 300.88 & 302 && 7.98 & 8 & 157.50 & 158 & 601.89 & 604\\
47 & 2.00 & 2 & 57.88 & 58 & 198.48 & 199 && 3.99 & 4 & 102.77 & 103 & 397.05 & 398\\
48 & 1.99 & 2 & 55.81 & 56 & 198.20 & 199 && 3.99 & 4 & 102.64 & 103 & 396.49 & 398\\
49 & 2.99 & 3 & 64.85 & 65 & 205.42 & 206 && 5.99 & 6 & 115.72 & 116 & 410.92 & 412\\
50 & 6.00 & 6 & 94.92 & 95 & 311.67 & 312 && 12.00 & 12 & 182.87 & 183 & 623.48 & 624\\
51 & 5.99 & 6 & 104.72 & 105 & 310.80 & 312 && 11.98 & 12 & 182.47 & 183 & 621.72 & 624\\
52 & 3.00 & 3 & 64.94 & 65 & 206.72 & 207 && 6.00 & 6 & 115.89 & 116 & 413.52 & 414\\
53 & 2.99 & 3 & 67.84 & 68 & 205.30 & 206 && 5.99 & 6 & 115.71 & 116 & 410.68 & 412\\
54 & 3.00 & 3 & 67.88 & 68 & 206.50 & 207 && 5.99 & 6 & 115.78 & 116 & 413.08 & 414\\
55 & 1.00 & 1 & 32.93 & 33 & 102.73 & 103 && 1.99 & 2 & 54.86 & 55 & 205.50 & 206\\
56 & 2.99 & 3 & 55.77 & 56 & 158.06 & 159 && 5.98 & 6 & 94.56 & 95 & 316.20 & 318\\
57 & 1.99 & 2 & 33.87 & 34 & 103.46 & 104 && 3.99 & 4 & 60.76 & 61 & 206.96 & 208\\
58 & 2.00 & 2 & 34.93 & 35 & 103.71 & 104 && 4.00 & 4 & 60.87 & 61 & 207.45 & 208\\
59 & 1.00 & 1 & 20.99 & 21 & 54.95 & 55 && 2.00 & 2 & 33.98 & 34 & 109.92 & 110\\\bottomrule
\end{tabular}
\vspace{10pt}
\end{center}
\end{table}

\begin{table}[htb]
\begin{center}
\footnotesize
\setlength{\extrarowheight}{0.3pt}
\setlength{\tabcolsep}{4pt}
\caption{Sato-Tate statistics for Klein twists over degree one primes $p\le 2^{26}$.}\label{table: Klein statistics}
\begin{tabular}{rrrrrrrcrrrrrr}\vspace{5pt}
&\multicolumn{6}{c}{$\ST(C_k)$} &&\multicolumn{6}{c}{$\ST(C_{kM})$}\\
$\#$ & \hspace{8pt}$\overline\M_{101}$ & \hspace{-4pt}$\M_{101}$ & \hspace{6pt}$\overline\M_{030}$ & \hspace{-4pt}$\M_{030}$ & \hspace{10pt}$\overline\M_{202}$ & \hspace{-4pt}$\M_{202}$ && \hspace{8pt}$\overline\M_{101}$ &\hspace{-4pt} $\M_{101}$ & \hspace{10pt}$\overline\M_{030}$ &\hspace{-4pt} $\M_{030}$ & \hspace{12pt}$\overline\M_{202}$ &\hspace{-4pt} $\M_{202}$  \\\toprule
1 & 26.99 & 27 & 620.78 & 621 & 2366.04 & 2367 && 53.99 & 54 & 1214.67 & 1215 & 4732.50 & 4734\\
2 & 12.99 & 13 & 318.73 & 319 & 1185.85 & 1187 && 25.98 & 26 & 610.52 & 611 & 2371.92 & 2374\\
3 & 11.99 & 12 & 308.67 & 309 & 1192.59 & 1194 && 11.99 & 12 & 308.67 & 309 & 1192.59 & 1194\\
4 & 8.99 & 9 & 215.76 & 216 & 787.97 & 789 && 17.98 & 18 & 404.55 & 405 & 1576.08 & 1578\\
5 & 9.00 & 9 & 206.95 & 207 & 788.79 & 789 && 18.00 & 18 & 404.94 & 405 & 1577.71 & 1578)\\
6 & 7.00 & 7 & 154.90 & 155 & 596.58 & 597 && 14.00 & 14 & 308.88 & 309 & 1193.45 & 1194\\
7 & 6.99 & 7 & 167.80 & 168 & 596.13 & 597 && 13.99 & 14 & 308.63 & 309 & 1192.36 & 1194\\
8 & 4.01 & 4 & 103.36 & 103 & 399.55 & 398 && 4.01 & 4 & 103.36 & 103 & 399.55 & 398\\
9 & 3.99 & 4 & 111.68 & 112 & 396.63 & 398 && 7.98 & 8 & 205.37 & 206 & 793.34 & 796\\
10 & 5.99 & 6 & 113.83 & 114 & 365.24 & 366 && 11.99 & 12 & 200.67 & 201 & 730.55 & 732\\
11 & 3.00 & 3 & 85.94 & 86 & 301.73 & 302 && 6.00 & 6 & 157.89 & 158 & 603.51 & 604\\
12 & 2.01 & 2 & 55.11 & 55 & 206.43 & 206 && 2.01 & 2 & 55.11 & 55 & 206.43 & 206\\
13 & 2.00 & 2 & 37.97 & 38 & 121.81 & 122 && 4.00 & 4 & 66.94 & 67 & 243.65 & 244\\
14 & 1.00 & 1 & 11.94 & 12 & 25.70 & 26 && 2.00 & 2 & 18.87 & 19 & 51.40 & 52\\\bottomrule
\end{tabular}
\end{center}
\end{table}

\end{document}